\newtheorem{theorem}{Theorem}[section]
\newtheorem{lemma}[theorem]{Lemma}
\newtheorem{proposition}[theorem]{Proposition}
\newtheorem{definition}[theorem]{Definition}
\newtheorem{remark}[theorem]{Remark}
\newtheorem*{hypoc_data}{Data conditions}
\newtheorem*{hypoc_potential}{Hypocoercivity assumptions}
\newtheorem*{assumption}{Assumption}
\theoremstyle{nonumberbreak}
\newtheorem{proof}{Proof}
\newcommand{\nref}[2]{\hyperlink{#1}{\upshape\textbf{#2}}}
\newcommand*{\R}{\mathbb{R}}
\newcommand*{\N}{\mathbb{N}}
\newcommand*{\eps}{\varepsilon}
\newcommand*{\supp}{\operatorname{supp}}
\newcommand*{\tr}{\operatorname{tr}}
\newcommand*{\defeq}{\mathrel{\vcenter{\baselineskip0.5ex \lineskiplimit0pt
			\hbox{\scriptsize.}\hbox{\scriptsize.}}}%
	=}
\newcommand*{\eqdef}{=\mathrel{\vcenter{\baselineskip0.5ex \lineskiplimit0pt
			\hbox{\scriptsize.}\hbox{\scriptsize.}}}}
\numberwithin{equation}{section}
\title{Essential m-dissipativity and hypocoercivity of Langevin dynamics with multiplicative noise}
\author{Alexander Bertram%
	\thanks{Department of Mathematics, TU Kaiserslautern, PO box 3049, 67653 Kaiserslautern, Germany}
	\textsuperscript{,}%
	\thanks{krampe@mathematik.uni-kl.de (corresponding author)}
	\and
	Martin Grothaus%
	\footnotemark[1]
	\textsuperscript{,}%
	\thanks{grothaus@mathematik.uni-kl.de}
}
\begin{document}
\maketitle
\begin{abstract}
	We provide a complete elaboration of the $L^2$-Hilbert space hypocoercivity theorem for the degenerate Langevin dynamics with multiplicative noise, studying the longtime behavior of the strongly continuous contraction semigroup solving the abstract Cauchy problem for the associated backward Kolmogorov operator. Hypocoercivity for the Langevin dynamics with constant diffusion matrix was proven previously by Dolbeault, Mouhot and Schmeiser in the corresponding Fokker-Planck framework, and made rigorous in the Kolmogorov backwards setting by Grothaus and Stilgenbauer. We extend these results to weakly differentiable diffusion coefficient matrices, introducing multiplicative noise for the corresponding stochastic differential equation. The rate of convergence is explicitly computed depending on the choice of these coefficients and the potential giving the outer force. In order to obtain a solution to the abstract Cauchy problem, we first prove essential self-adjointness of non-degenerate elliptic Dirichlet operators on Hilbert spaces, using prior elliptic regularity results and techniques from Bogachev, Krylov and Röckner. We apply operator perturbation theory to obtain essential m-dissipativity of the Kolmogorov operator, extending the m-dissipativity results from Conrad and Grothaus. We emphasize that the chosen Kolmogorov approach is natural, as the theory of generalized Dirichlet forms implies a stochastic representation of the Langevin semigroup as the transition kernel of a diffusion process which provides a martingale solution to the Langevin equation with multiplicative noise. Moreover, we show that even a weak solution is obtained this way.
\end{abstract}

\textbf{Keywords:}
Langevin equation,  multiplicative noise, hypocoercivity, essential m-dissipativity, essential self-adjointness, Fokker-Planck equation\\
\textbf{MSC (2020):}
37A25, 47D07, 35Q84, 47B44, 47B25

\subsection*{Acknowledgment}
This version of the article has been accepted for publication, after peer review
but is not the Version of Record and does not reflect post-acceptance improvements, or any
corrections.

The Version of Record is available online at: \url{https://doi.org/10.1007/s00028-022-00773-y}

\section{Introduction}

We study the exponential decay to equilibrium of Langevin dynamics with multiplicative noise.
The corresponding evolution equation is given by the following  stochastic differential equation
on $\R^{2d}$, $d\in\N$, as
\begin{equation}\label{eq:sde}
\begin{aligned}
	dX_t &= V_t\,\mathrm{d}t, \\
	dV_t &= b(V_t)\mathrm{d}t-\nabla\Phi(X_t)\,\mathrm{d}t + \sqrt{2}\sigma(V_t)\,\mathrm{d}B_t,
\end{aligned}
\end{equation}
where $\Phi:\R^d\to\R$ is a suitable potential whose properties are specified later,
$B=(B_t)_{t\geq 0}$ is a standard $d$-dimensional Brownian motion, $\sigma:\R^d\to\R^{d\times d}$
a variable diffusion matrix with at least weakly differentiable coefficients, 
and $b:\R^d\to\R^d$ given by
\[
	b_i(v)=\sum_{j=1}^d \partial_j a_{ij}(v) -a_{ij}(v)v_j,
\]
where $a_{ij}=\Sigma_{ij}$ with $\Sigma= \sigma\sigma^T$.

This equation describes the evolution of a particle described via its position $(X_t)_{t\geq 0}$
and velocity $(V_t)_{t\geq 0}$ coordinates, which is subject to friction, stochastic perturbation depending on its velocity, and some outer force $\nabla\Phi$. To simplify notation, we split $\R^{2d}$ into the two components $x,v\in\R^d$
corresponding to position and velocity respectively. This extends to differential operators $\nabla_x,\nabla_v$,
and the Hessian matrix $H_v$.

Using Itô's formula, we obtain the associated Kolmogorov operator $L$ as
\begin{equation}\label{eq:langevin-op}
	L=\tr\left(\Sigma H_v \right)+ b(v)\cdot\nabla_v +v\cdot\nabla_x -\nabla\Phi(x)\cdot\nabla_v.
\end{equation}
Here $a\cdot b$ or alternatively $(a,b)_{\mathrm{euc}}$ denotes the standard inner product of $a,b\in\R^d$.
We introduce the measure $\mu=\mu_{\Sigma,\Phi}$ on $(\R^{2d},\mathcal{B}(\R^{2d}))$ as
\[
	\mu_{\Sigma,\Phi} = (2\pi)^{-\frac{d}2} \mathrm{e}^{-\Phi(x)-\frac{v^2}2}\,\mathrm{d}x\otimes\mathrm{d}v
	\eqdef \mathrm{e}^{-\Phi(x)}\otimes\nu,
\]
i.e. $\nu$ is the normalized standard Gaussian measure on $\R^d$. We consider the operator $L$ on the Hilbert space $H\defeq L^2(\R^{2d},\mu)$.

We note that the results below on exponential convergence to equilibrium can also be translated to a corresponding Fokker-Planck setting, with the differential operator $L^\mathrm{FP}$ given as the adjoint, restricted to sufficiently smooth functions, of $L$ in $L^2(\R^{2d},\mathrm{d}(x,v))$.
The considered Hilbert space there is $\tilde{H}\defeq L^2(\R^{2d},\tilde{\mu})$, where
\[
	\tilde{\mu}\defeq  (2\pi)^{-\frac{d}2} \mathrm{e}^{\Phi(x)+\frac{v^2}2}\,\mathrm{d}x\otimes\mathrm{d}v.
\]
Indeed, this is the space in which hypocoercivity of the kinetic Fokker-Planck equation associated with the classical Langevin dynamics was proven in \cite{DMS15}. The rigorous  connection to the Kolmogorov backwards setting considered throughout this paper and convergence behaviour of solutions to the abstract Cauchy problem $\partial_t f(t)=L^\mathrm{FP}f(t)$ are discussed in Section~\ref{subsec:fokker-planck}.

The concept of hypocoercivity was first introduced in the memoirs of Cédric Villani (\cite{Villani}), which is recommended as further literature to the interested reader. The approach we use here was introduced algebraically by Dolbeault, Mouhot and Schmeiser (see \cite{DMS09} and \cite{DMS15}), and then made rigorous including domain issues in \cite{GS14} by Grothaus and Stilgenbauer, where it was applied to show exponential convergence to equilibrium of a Fiber laydown process on the unit sphere. This setting was further generalized by Wang and Grothaus in \cite{GW}, where the coercivity assumptions involving in part the classical Poincaré inequality for Gaussian measures were replaced by weak Poincaré inequalities, allowing for more general measures for both the spatial and the velocity component. In this case, the authors still obtained explicit, but subexponential rates of convergence. On the other hand, the stronger notion of hypercontractivity was explored in \cite{W} on general separable Hilbert spaces without the necessity to explicitly state the invariant measure. The specific case of hypocoercivity for Langevin dynamics on the position space $\R^d$ has been further explored in \cite{GS16} and serves as the basis for our hypocoercivity result. However, all of these prior results assume the diffusion matrix to be constant, while we allow for velocity-dependent coefficients. 

In contrast to \cite{GS16}, we do not know if our operator $(L,C_c^\infty(\R^{2d}))$ is essentially m-dissipative, and are therefore left to prove that first. This property of the Langevin operator has been shown by Helffer and Nier in \cite{HN2005} for smooth potentials and generalized to locally Lipschitz-continuous potentials by Conrad and Grothaus in \cite[Corollary 2.3]{CG10}. However, a corresponding result for a non-constant second order coefficient matrix $\Sigma$ is not known to the authors.

Moreover, the symmetric part $S$ of our operator $L$ does not commute with the linear operator $B$ as in \cite{GS16},
hence the boundedness of the auxiliary operator $BS$ needs to be shown in a different way, which we do in Proposition~\ref{prop:aux-bound-sym}.

In Theorem~\ref{thm:ess-m-diss}, we show under fairly light assumptions on the coefficients and the potential that the operator $(L,C_c^\infty(\R^{2d}))$ is essentially m-dissipative and therefore generates a strongly continuous contraction semigroup on $H$. The proof is given in Section 4 and follows the main ideas as in the proof of \cite[Theorem 2.1]{CG10}, where a corresponding result for $\Sigma=I$ was obtained.

For that proof we rely on perturbation theory of m-dissipative operators, starting with essential m-dissipativity of the symmetric part of $L$. To that end, we state an essential self-adjointness result for a set of non-degenerate elliptic Dirichlet differential operators $(S,C_c^\infty(\R^d))$ on $L^2$-spaces
where the measure is absolutely continuous wrt.~the Lebesgue measure. This result is stated in Theorem~\ref{thm:ess-self-adjoint} and combines regularity results from \cite{BKR01} and \cite{BGS13} with the approach to show essential self-adjointness from \cite{BKR97}.

Finally, our main hypocoercivity result reads as follows:
\begin{theorem}\label{thm:main-result}
	Let $d\in\N$. Assume that $\Sigma:\R^d\to\R^{d\times d}$ is a symmetric matrix of coefficients
	$a_{ij}:\R^d\to\R$ which is uniformly strictly elliptic with ellipticity constant $c_\Sigma$.
	Moreover, let each $a_{ij}$ be bounded and locally Lipschitz-continuous,
	hence $a_{ij}\in H_{\mathrm{loc}}^{1,p}(\R^d,\nu)\cap L^\infty(\R^d)$ for each $p\geq 1$.
	Assume the growth behaviour of $\partial_k a_{ij}$ for all $1\leq k\leq d$ to be bounded either by
	\[
		|\partial_k a_{ij}(v)|\leq M(1+|v|)^\beta
	\]
	for $\nu$-almost all $v\in\R^d$ and some $M<\infty$, $\beta\in (-\infty,0]$ or by
	\[
		|\partial_k a_{ij}(v)|\leq M(\mathds{1}_{B_1(0)}(v)+|v|^\beta)
	\]
	for $\nu$-almost all $v\in\R^d$ and some $M<\infty$, $\beta\in (0,1)$.
	Define $N_\Sigma$ in the first case as $N_\Sigma\defeq \sqrt{M_\Sigma^2+(B_\Sigma\vee M)^2}$
		and in the second case as $N_\Sigma\defeq \sqrt{M_\Sigma^2+B_\Sigma^2+dM^2}$, where
	\begin{align*}
		M_\Sigma &\defeq \max\{ \|a_{ij}\|_\infty\mid 1\leq i,j \leq d \}\quad\text{ and }\\
		B_\Sigma &\defeq \max\left\{|\partial_j a_{ij}(v)| : v\in\overline{B_1(0)},\ 1\leq i,j \leq d \right\}.
	\end{align*}
	Let further $\Phi:\R^d\to\R$ be bounded from below, satisfy $\Phi\in C^2(\R^d)$ and that $\mathrm{e}^{-\Phi(x)}\,\mathrm{d}x$
	is a probability measure on $(\R^d,\mathcal{B}(\R^d))$ which satisfies a Poincaré inequality of the form
	\[
		\|\nabla f\|_{L^2(\mathrm{e}^{-\Phi(x)}\,\mathrm{d}x)}^2
		\geq \Lambda\left\| f-\int_{R^d}f\mathrm{e}^{-\Phi(x)}\,\mathrm{d}x \right\|_{L^2(\mathrm{e}^{-\Phi(x)}\,\mathrm{d}x)}^2
	\]
	for some $\Lambda\in (0,\infty)$ and all $f\in C_c(\R^d)$. Furthermore assume the existence of a constant $c<\infty$ such that 
	\[
		|H\Phi(x)|\leq c(1+|\nabla\Phi(x)|)\quad\text{ for all }x\in\R^d,
	\]
	where $H$ denotes the Hessian matrix and $|H\Phi|$ the Euclidian matrix norm.
	If $\beta >-1$, then also assume that there are constants $N<\infty$, $\gamma < \frac{2}{1+\beta}$ such that
	\[
		|\nabla\Phi(x)|\leq N(1+|x|^\gamma)\qquad \text{ for all } x\in\R^d.
	\]
	
	Then the Langevin operator $(L,C_c^\infty(\R^{2d}))$ as defined in \eqref{eq:langevin-op} is closable on $H$ and its closure $(L,D(L))$ generates a strongly continuous contraction semigroup $(T_t)_{t\geq 0}$ on $H$. Further, it holds that for each $\theta_1\in (1,\infty)$,
	there is some $\theta_2\in (0,\infty)$ such that
	\[
		\left\| T_tg- (g,1)_H \right\|_H \leq \theta_1 \mathrm{e}^{-\theta_2 t}\left\| g-(g,1)_H  \right\|_H
	\]
	for all $g\in H$ and all $t\geq 0$. In particular, $\theta_2$ can be specified as
	\[
		\theta_2=\frac{\theta_1-1}{\theta_1}\frac{c_\Sigma}{n_1+n_2N_\Sigma+n_3N_\Sigma^2},
	\]
	and the coefficients $n_i\in(0,\infty)$ only depend on the choice of $\Phi$.
\end{theorem}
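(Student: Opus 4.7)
The plan is to follow the abstract Hilbert space hypocoercivity machinery of Dolbeault--Mouhot--Schmeiser, rigorously formulated in the Kolmogorov setting in \cite{GS14,GS16}. As a first step I invoke Theorem~\ref{thm:ess-m-diss} to conclude that $(L,C_c^\infty(\R^{2d}))$ is essentially m-dissipative, so its closure $(L,D(L))$ generates a $C_0$-contraction semigroup $(T_t)_{t\geq 0}$ on $H$; this already delivers the closability and generation statement of the theorem. Since $1 \in D(L)$ with $L 1 = 0$ and $\ker L = \mathrm{span}\{1\}$, the decay estimate reduces to exponential contraction of $T_t$ on the orthogonal complement of the constants.

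Next I decompose $L = S - A$ into its $H$-symmetric and antisymmetric parts,
\[
  S = \tr(\Sigma H_v) + b(v) \cdot \nabla_v, \qquad A = \nabla\Phi(x) \cdot \nabla_v - v \cdot \nabla_x,
\]
so that $S$ acts only in the velocity variable and is essentially self-adjoint by Theorem~\ref{thm:ess-self-adjoint} applied slice-wise in the Gaussian measure $\nu$, while $A$ is essentially antisymmetric on $C_c^\infty(\R^{2d})$. The orthogonal projection onto $\ker S$ is the $v$-averaging operator
\[
  P f(x,v) = \int_{\R^d} f(x,w)\, \nu(dw),
\]
and the algebraic identity $PAP = 0$ follows from $A P f = v \cdot \nabla_x (Pf)$ together with the symmetry of $\nu$.

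The heart of the proof is verifying the four data conditions of the abstract hypocoercivity framework. Microscopic coercivity $-(Sf,f)_H \geq \Lambda_m \|(I-P)f\|_H^2$ with $\Lambda_m \geq c_\Sigma$ is obtained by combining uniform ellipticity of $\Sigma$ with the Gaussian Poincaré inequality in $v$, applied slice-wise. Macroscopic coercivity $\|APf\|_H^2 \geq \Lambda_M \|Pf\|_H^2$ on the orthogonal complement of the constants reduces, for $v$-independent $f$, to the spatial Poincaré inequality assumed on $\mathrm{e}^{-\Phi(x)}\,dx$ and yields $\Lambda_M = \Lambda$. The algebraic condition $PAP = 0$ was noted above. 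The decisive obstacle is the boundedness of the auxiliary operators built from $B \defeq (I + (AP)^* AP)^{-1} (AP)^*$, namely $BS$ and $BA(I-P)$: unlike in \cite{GS16}, where $\Sigma$ is constant and $S$ commutes with the position-derivative part of $B$, the $v$-dependent diffusion here destroys this commutation, so the clean commutator identities of that paper are unavailable. This is exactly where Proposition~\ref{prop:aux-bound-sym} is required; the two growth regimes for $\partial_k a_{ij}$ feed into the constants $M_\Sigma$ and $B_\Sigma$ and produce the combined constant $N_\Sigma$. The bound for $BA(I-P)$ is obtained, as in \cite{GS16}, from the Hessian control $|H\Phi|\leq c(1+|\nabla\Phi|)$ together with the polynomial growth bound on $\nabla\Phi$ that is imposed whenever $\beta > -1$.

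Assembling these four ingredients and invoking the abstract hypocoercivity theorem of \cite{GS14}, as specialized to the Langevin setting in \cite{GS16}, yields an equivalent norm on the orthogonal complement of the constants under which $(T_t)_{t\geq 0}$ is exponentially contractive. Tracking the constants through the DMS entropy construction then produces the explicit rate
\[
  \theta_2 = \frac{\theta_1 - 1}{\theta_1}\cdot\frac{c_\Sigma}{n_1 + n_2 N_\Sigma + n_3 N_\Sigma^2},
\]
with coefficients $n_1, n_2, n_3$ depending only on $\Phi$ through $\Lambda$, the Hessian constant $c$, and the growth constants $N,\gamma$ controlling $\nabla\Phi$. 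The genuinely new work in the whole argument is concentrated in Proposition~\ref{prop:aux-bound-sym}; everything else is a careful bookkeeping exercise within the established Kolmogorov-side hypocoercivity recipe.
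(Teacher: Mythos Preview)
Your proposal is essentially correct and follows the same route as the paper: verify the data conditions \nref{ass:data}{(D)} and hypocoercivity conditions \nref{ass:algebraic}{(H1)}--\nref{ass:aux-bound}{(H4)} via the propositions in Section~3 (with Proposition~\ref{prop:aux-bound-sym} doing the genuinely new work for the $BS$ bound), apply the abstract Theorem~\ref{thm:hypoc}, and then track the constants through~\eqref{eq:conv-rate} to extract the explicit $\theta_2$.

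One point of bookkeeping is misattributed. The polynomial growth bound $|\nabla\Phi(x)|\leq N(1+|x|^\gamma)$ with $\gamma<\frac{2}{1+\beta}$ is \emph{not} used for the $BA(I-P)$ bound; that estimate needs only \nref{ass:hypoc-potential}{(C1)--(C3)} exactly as in \cite{GS16}, and the constant $c_2=c_\Phi$ depends only on $\Lambda$ and the Hessian constant $c$. The polynomial growth bound enters solely through Theorem~\ref{thm:ess-m-diss}, where it is needed in the cutoff argument (Lemma~\ref{lem:even-more-technical}) to control the $\nabla\Phi\cdot\nabla_v$ term when approximating $L$ by the nice-coefficient operators $L_{n,m}$. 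Consequently the $n_i$ depend only on $\Lambda$ and $c_\Phi$, not on $N$ or $\gamma$. Also note the paper's $P$ is your $P_S$ minus the mean, and $APf=-v\cdot\nabla_x(P_Sf)$ carries a minus sign; these are cosmetic but worth aligning.
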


Finally, our main results may be summarized by the following list:
\begin{itemize}[$\bullet$]
	\item Essential m-dissipativity (equivalently essential self-adjointness) of non-degenerate elliptic Dirichlet differential operators with domain $C_c^\infty(\R^d)$
		on Hilbert spaces with measure absolutely continuous wrt.~the $d$-dimensional Lebesgue measure is proved, see Theorem~\ref{thm:ess-self-adjoint}.
	\item Essential m-dissipativity of the backwards Kolmogorov operator $(L,C_c^\infty(\R^d))$ associated with the Langevin equation
		with multiplicative noise \eqref{eq:sde} on the Hilbert space $H$ under weak assumptions on the coefficient matrix $\Sigma$ and the potential $\Phi$, in particular not requiring smoothness, is shown, see Theorem~\ref{thm:ess-m-diss}.
	\item Exponential convergence to a stationary state of the corresponding solutions to the abstract Cauchy problem
		$\partial_t u(t) = Lu(t)$, see \eqref{eq:cauchy-kol} on the Hilbert space $H$ with explicitly computable rate of convergence, as stated in Theorem~\ref{thm:main-result}, is proved.
	\item Adaptation of this convergence result to the equivalent formulation as a Fokker-Planck PDE on the appropriate Hilbert space $\tilde{H}\defeq L^2(\R^{2d},\tilde{\mu})$ is provided. In particular, this yields exponential convergence of the solutions to the abstract Fokker-Planck Cauchy problem $\partial_t u(t)=L^\mathrm{FP}u(t)$, with $L^\mathrm{FP}$ given by \eqref{eq:fokker-planck-op}, to a stationary state, see Section~\ref{subsec:fokker-planck}.
	\item A stochastic interpretation of the semigroup as a transition kernel for a diffusion process is worked out.
		Moreover, we prove this diffusion process to be a weak solution to the Langevin SDE \eqref{eq:sde} and derive for it strong mixing properties with explicit rates of convergence, see Section~\ref{subsec:stochastics}.
\end{itemize}

\section{The abstract hypocoercivity setting}

We start by recalling some basic facts about closed unbounded operators on Hilbert spaces:
\begin{lemma}\label{lem:adjoint_operator}
	Let $(T,D(T))$ be a densely defined linear operator on $H$ and
	let $L$ be a bounded linear operator with domain $H$.
	\begin{enumerate}[(i)]
		\item The adjoint operator $(T^*,D(T^*))$ exists and is closed.
			If $D(T^*)$ is dense in $H$, then $(T,D(T))$ is closable
			and for the closure $(\overline{T},D(\overline{T}))$ it holds
			$\overline{T}=T^{**}$.
		\item $L^*$ is bounded and $\|L^*\|=\|L\|$.
		\item If $(T,D(T))$ is closed, then $D(T^*)$ is automatically dense in $H$.
			Consequently by (i), $T=T^{**}$.
		\item Let $(T,D(T))$ be closed. Then the operator $TL$ with domain
			\[
				D(TL)=\{ f\in H\mid Lf\in D(T) \}
			\]
			is also closed.
		\item $LT$ with domain $D(T)$ need not be closed, however
			\[
				(LT)^*=T^*L^*.
			\]
	\end{enumerate}
\end{lemma}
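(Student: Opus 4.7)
Since all five statements are standard facts about adjoints in the theory of unbounded operators on Hilbert spaces, the plan is to reduce each to a short argument based on the graph characterization of the adjoint, namely
\[
	\Gamma(T^*) = \bigl(V\Gamma(T)\bigr)^\perp\quad\text{in}\ H\times H,
	\qquad V(f,g)\defeq (-g,f),
\]
where $\Gamma(\cdot)$ denotes the graph. I would address the items roughly in the order (i), (ii), (iii), (iv), (v), since (iii) is the only one whose proof has nontrivial content.

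For (i), since $T$ is densely defined, $T^*$ is well-defined on $D(T^*)\defeq\{g\in H : f\mapsto (Tf,g)_H \text{ is continuous on }D(T)\}$ via Riesz, and closedness of $T^*$ is immediate from the fact that $\Gamma(T^*)$ is an orthogonal complement, hence closed. Assuming now that $D(T^*)$ is dense, $T^{**}$ is well-defined, and applying the graph identity twice together with $V^2=-\mathrm{id}$ shows $\Gamma(T^{**})=\overline{\Gamma(T)}$, which yields both closability of $T$ and $\overline{T}=T^{**}$. Item (ii) follows directly from Riesz representation: for $\|L\|<\infty$ and $g\in H$ the map $f\mapsto (Lf,g)_H$ has norm at most $\|L\|\|g\|$, giving $L^*\in\mathcal{L}(H)$ with $\|L^*\|\leq\|L\|$, and the reverse inequality is obtained by applying the same argument to $L^*$ using $L^{**}=L$.

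The main obstacle, and the step requiring genuine work, is (iii). I would argue by contradiction: if $D(T^*)$ were not dense, pick $0\neq h\in D(T^*)^\perp$. Then $(h,0)\in\Gamma(T^*)^\perp = VV\Gamma(T)^{\perp\perp}=V\Gamma(T)$ (since $T$ is closed, $\Gamma(T)$ is already closed); but $V\Gamma(T)=\{(-Tf,f):f\in D(T)\}$, which would force the second coordinate $f=0$, and hence $h=-Tf=0$, a contradiction. With $D(T^*)$ dense, $T^{**}$ exists, and the same computation of $\Gamma(T^{**})=\overline{\Gamma(T)}=\Gamma(T)$ as in (i) yields $T=T^{**}$.

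For (iv), let $(f_n)\subset D(TL)$ with $f_n\to f$ and $TLf_n\to g$. Boundedness of $L$ yields $Lf_n\to Lf$; together with $T(Lf_n)\to g$ and closedness of $T$, this gives $Lf\in D(T)$ and $TLf=g$, i.e.\ $f\in D(TL)$ and $TLf=g$. For (v), the composition $LT$ is densely defined on $D(T)$, so $(LT)^*$ exists, and a direct computation from the defining identity $(LTf,g)_H=(Tf,L^*g)_H$ shows that $g\in D((LT)^*)$ iff $L^*g\in D(T^*)$, in which case $(LT)^*g=T^*L^*g$, giving $(LT)^*=T^*L^*$ with the natural domain $D(T^*L^*)=\{g\in H : L^*g\in D(T^*)\}$.
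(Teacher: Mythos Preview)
Your proof is correct. The paper itself does not prove this lemma at all: it is introduced with ``We start by recalling some basic facts about closed unbounded operators on Hilbert spaces'' and stated without proof, as is customary for such standard material. Hence there is nothing to compare against, and your argument via the graph identity $\Gamma(T^*)=(V\Gamma(T))^\perp$ is exactly the textbook route one would expect.

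One very minor remark on presentation: in your treatment of (iii), the chain ``$\Gamma(T^*)^\perp = VV\Gamma(T)^{\perp\perp}=V\Gamma(T)$'' is a bit garbled; what you presumably mean (and use correctly thereafter) is $\Gamma(T^*)^\perp = \bigl((V\Gamma(T))^\perp\bigr)^\perp = \overline{V\Gamma(T)} = V\Gamma(T)$, the last equality holding because $T$ is closed and $V$ is unitary. The rest of the argument, in particular the contradiction $(h,0)=(-Tf,f)$ forcing $f=0$ and hence $h=0$, is clean.
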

Let us now briefly state the abstract setting for the hypocoercivity method as in \cite{GS14}.
\begin{hypoc_data}[\hypertarget{ass:data}{D}]
	We require the following conditions which are henceforth assumed without further mention.
	\begin{enumerate}[(D1)]
		\item \emph{The Hilbert space:} Let $(E,\mathcal{F},\mu)$ be some probability space
			and define $H$ to be $H=L^2(E,\mu)$ equipped with the standard inner product $(\cdot,\cdot)_H$.
		\item \emph{The $C_0$-semigroup and its generator:} $(L,D(L))$ is some linear operator on $H$ generating
			a strongly continuous contraction semigroup $(T_t)_{t\geq0}$.
		\item \emph{Core property of $L$:} Let $D\subset D(L)$ be a dense subspace of $H$
			which is a core for $(L,D(L))$.
		\item \emph{Decomposition of $L$:} Let $(S,D(S)))$ be symmetric,
			$(A,D(A))$ be closed and antisymmetric on $H$ such that
			$D\subset D(S)\cap D(A)$ as well as $L|_D=S-A$.
		\item \emph{Orthogonal projections:} Let $P:H\to H$ be an orthogonal projection satisfying
			$P(H)\subset D(S),\, SP=0$ as well as $P(D)\subset D(A),\, AP(D)\subset D(A)$.
			Moreover, let $P_S:H\to H$ be defined as
			\[
				P_Sf\defeq Pf+(f,1)_H,\qquad f\in H.
			\] 
		\item \emph{Invariant measure:} Let $\mu$ be invariant for $(L,D)$ in the sense that
			\[
				(Lf,1)_H = \int_E Lf\,\mathrm{d}\mu=0\qquad \text{ for all }f\in D.
			\]
		\item \emph{Conservativity:} It holds that $1\in D(L)$ and $L1=0$.
	\end{enumerate}
\end{hypoc_data}
Since $(A,D(A))$ is closed, $(AP, D(AP))$ is also closed and densely defined.
Hence by von Neumann's theorem, the operator
\[
I+(AP)^*(AP): D((AP)^*AP)\to H,
\]
where $D((AP)^*AP)=\{f\in D(AP)\mid APf\in D((AP)^*) \}$,
is bijective and admits a bounded inverse.
We therefore define the operator $(B,D((AP)^*))$ via
\[
B\defeq (I+(AP)^*AP)^{-1}(AP)*
\]
Then $B$ extends to a bounded operator on $H$.

As in the given source, we also require the following assumptions:
\begin{assumption}[\hypertarget{ass:algebraic}{H1}]
	\emph{Algebraic relation:} It holds that $PAP|_D=0$.
\end{assumption}
\begin{assumption}[\hypertarget{ass:micro-coerc}{H2}]
	\emph{Microscopic coercivity:} There exists some $\Lambda_m>0$ such that
	\[
		-(Sf,f)_H \geq \Lambda_m\|(I-P_S)f \|^2\qquad \text{ for all }f\in D.
	\]
\end{assumption}
\begin{assumption}[\hypertarget{ass:macro-coerc}{H3}]
	\emph{Macroscopic coercivity:} Define $(G,D)$ via $G=PA^2P$ on $D$.
	Assume that $(G,D)$ is essentially self-adjoint on $H$.
	Moreover, assume that there is some $\Lambda_M>0$ such that
	\[
		\|APf\|^2 \geq \Lambda_M \|Pf\|^2 \qquad\text{ for all }f\in D.
	\]
\end{assumption}
\begin{assumption}[\hypertarget{ass:aux-bound}{H4}]
	\emph{Boundedness of auxiliary operators:}
	The operators $(BS,D)$ and $(BA(I-P),D)$ are bounded
	and there exist constants $c_1,c_2<\infty$ such that
	\[
		\|BSf\|\leq c_1\|(I-P)f\| \quad\text{ and }\quad
		\|BA(I-P)f\| \leq c_2 \|(I-P)f\|
	\]
	hold for all $f\in D$. 
\end{assumption}

We now state the central abstract hypocoercivity theorem as in \cite{GS14}:
\begin{theorem}\label{thm:hypoc}
	Assume that (D) and (H1)-(H4) hold. Then there exist strictly positive
	constants $\kappa_1,\kappa_2<\infty$ which are explicitly computable
	in terms of $\Lambda_m,\Lambda_M, c_1$ and $c_2$ such that
	for all $g\in H$ we have
	\[
		\|T_tg-(g,1)_H\|\leq \kappa_1\mathrm{e}^{-\kappa_2t} \|g-(g,1)_H\|
		\quad\text{ for all }t\geq0.
	\]
	More specifically, if there exist $\delta>0$, $\eps\in(0,1)$ and $0<\kappa<\infty$ such that
	for all $g\in D(L)$, $t\geq 0$, it holds
	\begin{equation}\label{eq:conv-rate}
	\begin{aligned}
		\kappa\|f_t\|^2\leq
		\left(\Lambda_m-\eps(1+c_1+c_2)\left(1+\frac1{2\delta}\right) \right)&\|(I-P)f_t\|^2\\
		+ \eps\left(\frac{\Lambda_M}{1+\Lambda_M}-(1+c_1+c_2)\frac\delta2 \right)&\|Pf_t\|^2,
	\end{aligned}
	\end{equation}
	where $f_t\defeq T_tg-(g,1)_H$, then the constants $\kappa_1$ and $\kappa_2$ are given by
	\[
		\kappa_1=\sqrt{\frac{1+\eps}{1-\eps}},\qquad \kappa_2=\frac\kappa{1+\eps}.
	\]
\end{theorem}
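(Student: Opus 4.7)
The plan is to follow the Dolbeault–Mouhot–Schmeiser approach as rigorously implemented in \cite{GS14}: construct a modified entropy (Lyapunov) functional equivalent to $\|\cdot\|_H^2$, show that its dissipation along the semigroup dominates a multiple of the functional itself, and conclude by Gronwall.

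First I would reduce to the mean-zero case: by (D6) and (D7), $L1=0$ gives $T_t1=1$, so $(T_tg,1)_H=(g,1)_H$ and $f_t=T_t(g-(g,1)_H)$. Replacing $g$ by $g-(g,1)_H$, we may assume $(f_t,1)_H=0$, so that $P_Sf_t=Pf_t$ and $(I-P_S)f_t=(I-P)f_t$. Then I would introduce the modified entropy
$$\Phi_\eps(f)\defeq\tfrac{1}{2}\|f\|_H^2+\eps(Bf,f)_H,\qquad \eps\in(0,1),$$
and show, by testing $(I+(AP)^*AP)Bf=(AP)^*f$ against $Bf$ and applying Cauchy–Schwarz, the bounds $\|Bf\|\leq\tfrac12\|f\|$ and $\|APBf\|\leq\|f\|$. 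The first bound yields the norm equivalence $\tfrac{1-\eps}{2}\|f\|^2\leq\Phi_\eps(f)\leq\tfrac{1+\eps}{2}\|f\|^2$.

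Second, I would differentiate $\Phi_\eps(f_t)$ along the semigroup. For $g\in D(L)$ one has $\partial_tf_t=Lf_t$, and the symmetric part yields via (H2)
$$-(Lf_t,f_t)=-(Sf_t,f_t)\geq\Lambda_m\|(I-P)f_t\|^2.$$
The coupling term $(BLf_t,f_t)+(Bf_t,Lf_t)$ is the heart of the argument. I would expand it using $L|_D=S-A$, split $A=AP+A(I-P)$, and invoke (H1) in the form $PAP=0$ to reduce the $Pf_t$-coercive contribution to $(BAPf_t,f_t)$. The identity $BAP=(I+(AP)^*AP)^{-1}(AP)^*AP$ combined with (H3) and the scalar bound $x/(1+x)\geq\Lambda_M/(1+\Lambda_M)$ for $x\geq\Lambda_M$ gives
$$(BAPf_t,Pf_t)\geq\tfrac{\Lambda_M}{1+\Lambda_M}\|Pf_t\|^2,$$
while (H4) and $\|B\|\leq\tfrac12$ bound the residual terms $(BSf_t,f_t)$, $(BA(I-P)f_t,f_t)$, and their adjoint counterparts by expressions of the form $C(1+c_1+c_2)\|(I-P)f_t\|\|f_t\|$. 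Splitting $\|f_t\|^2=\|Pf_t\|^2+\|(I-P)f_t\|^2$ and Young's inequality with parameter $\delta$ produce exactly the right-hand side of \eqref{eq:conv-rate}.

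Finally, if $\delta,\eps,\kappa$ are such that the RHS of \eqref{eq:conv-rate} dominates $\kappa\|f_t\|^2$, then $-\partial_t\Phi_\eps(f_t)\geq\kappa\|f_t\|^2\geq\tfrac{2\kappa}{1+\eps}\Phi_\eps(f_t)$, and Gronwall combined with the norm equivalence yields $\|f_t\|\leq\sqrt{(1+\eps)/(1-\eps)}\,e^{-\kappa t/(1+\eps)}\|f_0\|$. A density argument extends this from $D(L)$ to $H$. For the qualitative existence of admissible constants, one picks $\delta$ small enough that the $\|Pf_t\|^2$-coefficient is positive, then $\eps$ small enough that the $\|(I-P)f_t\|^2$-coefficient is positive, and takes $\kappa$ proportional to the minimum of the two. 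The main obstacle is the rigorous manipulation in the coupling step: since $B$ is defined only abstractly via von Neumann's theorem, one must verify carefully that $Bf_t$ lies in the appropriate domains for the compositions with $S$ and $A$ to make sense, that the products $BS$ and $BA(I-P)$ extend boundedly (ensured by (H4)), and that formal identities like $BAP=(I+(AP)^*AP)^{-1}(AP)^*AP$ hold on the relevant dense subspace with sufficient regularity to justify differentiation under the inner product.
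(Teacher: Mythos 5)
The paper does not prove this theorem itself; it states it as quoted directly from [GS14] (Grothaus–Stilgenbauer), which in turn rigorizes the DMS entropy method. Your sketch correctly reconstructs that argument: the modified entropy $\frac12\|f\|^2+\eps(Bf,f)$, the bounds $\|B\|\le\frac12$ and $\|APB\|\le1$ obtained by pairing $(I+(AP)^*AP)Bf=(AP)^*f$ with $Bf$, the spectral estimate $(BAPf,Pf)\ge\frac{\Lambda_M}{1+\Lambda_M}\|Pf\|^2$, the error bounds from (H4), Young's inequality with $\|f\|\le\|Pf\|+\|(I-P)f\|$ (which is what yields the coefficient $1+\frac1{2\delta}$ rather than $\frac\delta2+\frac1{2\delta}$ in \eqref{eq:conv-rate}), and Gronwall plus norm equivalence. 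You also correctly flag the domain subtleties around $B$, $BS$, $BA(I-P)$ that constitute the actual added value of [GS14] over the formal DMS computation. Two small imprecisions worth noting: the term $(Bf,Sf)$ is not merely bounded but vanishes identically, since $\operatorname{ran}B\subset P(H)\subset D(S)$ and $SP=0$ with $S$ symmetric; and the summand $1$ in $(1+c_1+c_2)$ does not come from either $BS$ or $BA(I-P)$ but from the term $(Bf,Af)=(APBf,(I-P)f)$, bounded via $\|APB\|\le1$ and the orthogonality $(APBf,Pf)=(PAPBf,Pf)=0$ supplied by (H1). These are details rather than gaps, and the overall structure matches the cited source.
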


In order to prove (H4), we will make use of the following result:
\begin{lemma}\label{lem:auxiliary-bound}
	Assume (H3). Let $(T,D(T))$ be a linear operator with $D\subset D(T)$ and assume $AP(D)\subset D(T^*)$.
	Then
	\[
		(I-G)(D)\subset D((BT)^*) \quad\text{ with }\quad (BT)^*(I-G)f = T^*APf,\quad f\in D.
	\]
	If there exists some $C<\infty$ such that
	\begin{equation}\label{eq:adjoint_bound}
		\|(BT)^* g\| \leq C\|g\|\qquad\text{ for all }g=(I-G)f,\quad f\in D,
	\end{equation}
	then $(BT,D(T))$ is bounded and its closure $(\overline{BT})$ is a bounded operator on $H$ with
	$\| \overline{BT}\|=\| (BT)^*\|$.
	
	In particular, if $(S,D(S))$ and $(A,D(A))$ satisfy these assumptions,
	the corresponding inequalities in (H4)
	are satisfied with $c_1=\| (BS)^*\|$ and $c_2=\| (BA)^*\|$.
\end{lemma}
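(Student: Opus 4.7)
The plan is to first identify $B^*$ explicitly, then chase adjoints to derive the key identity $(BT)^*(I-G)f=T^*APf$ on $D$, and finally use essential self-adjointness of $G$ to convert a bound on the dense subspace $(I-G)(D)$ into global boundedness of $\overline{BT}$.

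First I would compute $B^*$. Setting $N\defeq(AP)^*AP$, von Neumann's theorem gives that $(I+N)^{-1}$ is bounded self-adjoint on $H$ with range $D(N)\subset D(AP)$; a short duality computation then yields $B^*=AP(I+N)^{-1}$ on all of $H$. Next I would exploit (H1) on $D$: the data conditions give $AP(D)\subset D(A)\subset D(A^*)$ (using that $(A,D(A))$ is closed and antisymmetric), whence $(AP)^*APf=PA^*APf=-PA^2Pf=-Gf$ for $f\in D$. Therefore $(I+N)f=(I-G)f$ on $D$, and applying $(I+N)^{-1}$ produces $(I+N)^{-1}(I-G)f=f$, so $B^*(I-G)f=APf$. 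With the hypothesis $AP(D)\subset D(T^*)$ and Lemma~\ref{lem:adjoint_operator}(v), which identifies $(BT)^*=T^*B^*$ with domain $\{h\in H\mid B^*h\in D(T^*)\}$, this establishes $(I-G)(D)\subset D((BT)^*)$ and the asserted identity $(BT)^*(I-G)f=T^*APf$.

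For the boundedness statement, assume \eqref{eq:adjoint_bound}. Since $-G=N\geq 0$ on $D$, assumption (H3) ensures that $\overline{G}$ is self-adjoint and nonpositive, so $I-\overline{G}$ has bounded inverse and, by essential self-adjointness, $(I-G)(D)$ is dense in $H$. Now $(BT)^*$ is closed (as an adjoint) and densely defined, and it is bounded by $C$ on the dense subspace $(I-G)(D)$. A Cauchy-sequence argument using closedness then forces $D((BT)^*)=H$ with $\|(BT)^*\|\leq C$. By Lemma~\ref{lem:adjoint_operator}(i) this makes $BT$ closable with $\overline{BT}=(BT)^{**}$, and by Lemma~\ref{lem:adjoint_operator}(ii) the operator $\overline{BT}$ is bounded on $H$ with $\|\overline{BT}\|=\|(BT)^*\|$.

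To deduce the sharper (H4) bounds, I would split $f=Pf+(I-P)f$ for $f\in D$. Since $SP=0$ and $P(H)\subset D(S)$, one has $Sf=S(I-P)f$, yielding $\|BSf\|\leq\|\overline{BS}\|\,\|(I-P)f\|=\|(BS)^*\|\,\|(I-P)f\|$. The analogous bound for $BA(I-P)$ follows from $(I-P)(D)\subset D(A)$ together with boundedness of $\overline{BA}$. The main conceptual obstacle is the transition from a pointwise bound on $(I-G)(D)$ to boundedness of the closed operator $(BT)^*$ on all of $H$: this is precisely where (H3) is indispensable, since without density of $(I-G)(D)$ there would be no handle on $(BT)^*$ away from this subspace.
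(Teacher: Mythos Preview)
Your proof is correct and follows essentially the same route as the paper's: both establish $B^*(I-G)f=APf$ (you via the global formula $B^*=AP(I+N)^{-1}$ and the identity $(I+N)f=(I-G)f$ on $D$, the paper via a direct inner-product computation against $h\in D((AP)^*)$), then invoke density of $(I-G)(D)$ from (H3) and the closedness of $(BT)^*$ to conclude. One small mislabel: the inclusion $AP(D)\subset D(A)$ and the computation $(AP)^*APf=-Gf$ come from the data condition (D5) together with antisymmetry of $A$, not from (H1); the lemma neither assumes nor uses (H1).
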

\begin{proof}
	Let $h\in D((AP)^*)$ and $f\in D$. Set $g=(I-G)f$.
	By the representation of $B$ on $D((AP)^*)$ together with self-adjointness of $(I+(AP)^*AP)^{-1}$
	and $D\subset D(AP)$, we get	
	\[
		(h,B^*g)_H = (Bh, (I-G)f)_H = ((AP)^*h, f)_H = (h, APf)_H.
	\]
	So $B^*g = APf\in D(T^*)$. By Lemma~\ref{lem:adjoint_operator}~(v), $((BT)^*,D((BT)^*))=(T^*B^*, D(T^*B^*))$, which implies $(BT)^*g=T^*B^*g=T^*APf$.
	
	By essential self-adjointness and hence essential m-dissipativity of $G$, $(I-G)(D)$ is dense in $H$.
	Therefore by \eqref{eq:adjoint_bound}, the closed operator $((BT)^*,D((BT)^*))$ is a bounded operator on $H$.
	Since $(BT,D(T))$ is densely defined, by Lemma~\ref{lem:adjoint_operator}~(i) and (ii), it is closable
	with $\overline{BT}=(BT)^{**}$, which is a bounded operator on $H$ with the stated norm.
	
	The last part follows directly by $Sf=S(I-P)f$ for $f\in D$.
\end{proof}

\section{Hypocoercivity for Langevin dynamics with multiplicative noise}
As stated in the introduction, the aim of this section is to prove exponential convergence to equilibrium
of the semigroup solving the abstract Kolmogorov equation corresponding to the Langevin equation with multiplicative noise \eqref{eq:sde}.

We remark that most of the conditions are verified analogously to \cite{GS16}, the main difference being the proof of essential m-dissipativity for the operator $(L,C_c^\infty(\R^{2d}))$ as well as the first inequality in (H4).
Nevertheless, some care has to be taken whenever $S$ is involved, as it doesn't preserve regularity to the same extent as in the given reference.

\subsection{The data conditions}\label{subsec:data}
We start by introducing the setting and verifying the data conditions \nref{ass:data}{(D)}.
The notations introduced in this part will be used for the remainder of the section without further mention.

Let $d\in\N$ and set the state space as $E=\R^{2d}$, $\mathcal{F}=\mathcal{B}(\R^{2d})$.
In the following, the first $d$ components of $E$ will be written as $x$, the latter $d$ components as $v$.
Let $\nu$ be the normalised Gaussian measure on $\R^d$ with mean zero and covariance matrix $I$, i.e.
\[
\nu(A) = \int_{A} (2\pi)^{-\frac{d}2}\ \mathrm{e}^{-\frac{x^2}2}\,\mathrm{d}x. 
\]
\begin{assumption}[\hypertarget{ass:potential}{P}]
	The potential $\Phi:\R^d\to\R$ is assumed to depend only on the position variable $x$ and to be locally Lipschitz-continuous.
	We further assume $\mathrm{e}^{-\Phi(x)}\,\mathrm{d}x$ to be a probability measure on $(\R^d, \mathcal{B}(\R^d))$.
\end{assumption}
Note that the first part implies $\Phi\in H_\text{loc}^{1,\infty}(\R^d)$. Moreover, $\Phi$ is differentiable $\mathrm{d}x$-a.e. on $\R^d$,
such that the weak gradient and the derivative of $\Phi$ coincide $\mathrm{d}x$-a.e. on $\R^d$. In the following, we fix a version
of $\nabla \Phi$.

The probability measure $\mu$ on $(E,\mathcal{F})$ is then given by $\mu = \mathrm{e}^{-\Phi(x)}\,\mathrm{d}x\otimes\nu$, and we set
$H\defeq L^2(E,\mu)$, which satisfies condition (D1). Next we assume the following about $\Sigma=(a_{ij})_{1\leq i,j\leq d}$ with $a_{ij}:\R^d\to\R$:
\begin{assumption}[\hypertarget{ass:ellipticity}{$\Sigma$1}]
	$\Sigma$ is symmetric and uniformly strictly elliptic, i.e.~there is some $c_\Sigma>0$ such that
		\[
			(y,\Sigma(v) y) \geq c_\Sigma\cdot |y|^2\quad\text{ for all }y,v\in\R^d.
		\]
\end{assumption}
\begin{assumption}[\hypertarget{ass:coeff-derivatives}{$\Sigma$2}]
	There is some $p>d$ such that for all $1\leq i,j\leq d$, it holds that $a_{ij}\in H_\text{loc}^{1,p}(\R^d,\nu)\cap L^\infty(\R^d)$.
	Additionally, $a_{ij}$ is locally Lipschitz-continuous for all $1\leq i,j\leq d$.
\end{assumption}
Additionally, we will consider one of the following conditions on the growth of the partial derivatives:
\begin{assumption}[\hypertarget{ass:coeff-growth-one}{$\Sigma$3}]
	There are constants $0\leq M<\infty$, $-\infty< \beta \leq 0$ such that for all $1\leq i,j,k\leq d$
		\[
			|\partial_k a_{ij}(v)|\leq M(1+|v|)^\beta\quad\text{ for $\nu$-almost all }v\in\R^d.
		\]
\end{assumption}
\begin{assumption}[\hypertarget{ass:coeff-growth-two}{$\Sigma$3$'$}]
	There are constants $0\leq M<\infty$, $0< \beta <1$ such that for all $1\leq i,j,k\leq d$
	\[
	|\partial_k a_{ij}(v)|\leq M(\mathds{1}_{B_1(0)}(v)+|v|^\beta)\quad\text{ for $\nu$-almost all }v\in\R^d.
	\]
\end{assumption}
	We note that any of these imply $\partial_j a_{ij}\in L^2(\R^d,\nu)$ for all $1\leq i,j\leq d$.
\begin{definition}\label{def:coeff-const}
	Let $\Sigma$ satisfy \hyperlink{ass:coeff-derivatives}{($\Sigma$2)}. Then we set
	\begin{align*}
		M_\Sigma &\defeq \max\{ \|a_{ij}\|_\infty: 1\leq i,j \leq d \} \qquad\text{ and }\\
		B_\Sigma &\defeq \max\{ |\partial_j a_{ij}(v)|: v\in \overline{B_1(0)},\ 1\leq i,j \leq d \}.
	\end{align*}
	If $\Sigma$ additionally satisfies \hyperlink{ass:coeff-growth-one}{($\Sigma$3)}, then we define
	\[
		N_\Sigma\defeq \sqrt{M_\Sigma^2+(B_\Sigma\vee M)^2}.
	\]
	If instead \hyperlink{ass:coeff-growth-two}{($\Sigma$3$'$)} is fulfilled, then we consider instead
	\[
		N_\Sigma\defeq \sqrt{M_\Sigma^2+B_\Sigma^2+dM^2}.
	\]
\end{definition}
\begin{definition}\label{def:operators}
	Let $D=C_c^\infty(E)$ be the space of compactly supported smooth functions on $E$. We define the linear operators $S,A$ and $L$ on $D$ via
	\begin{align*}
		Sf &= \sum_{i,j=1}^d a_{ij}\partial_{v_j}\partial_{v_i}f + \sum_{i=1}^d b_i\partial_{v_i}f, \\
		   &\quad\text{ where } b_i(v)= \sum_{j=1}^d(\partial_j a_{ij}(v)-a_{ij}(v)v_j),\\
		Af &= \nabla\Phi(x)\cdot\nabla_v f - v\cdot\nabla_x f, \\
		Lf &= (S-A)f, \qquad \text{ for }f\in D.
	\end{align*}
	
	Integration by parts shows that $(S,D)$ is symmetric and non-positive definite on $H$, and $(A,D)$ is antisymmetric on $H$.
	Hence, all three operators with domain $D$ are dissipative and therefore closable.
	We denote their closure respectively by $(S,D(S)), (A,D(A))$ and $(L,D(L))$.
\end{definition}
For $f\in D$ and $g\in H^{1,2}(E,\mu)$, integration by parts yields
\[
(Lf,g)_H = -\int_E \left(\nabla f, \begin{pmatrix}0 & -I\\I & \Sigma\end{pmatrix}\nabla g \right)_\mathrm{euc}\, \mathrm{d}\mu.
\]
In particular, (D6) is obviously fulfilled.
Next we provide an estimate which will be needed later:
\begin{proposition}\label{prop:b-bound}
	Let \nref{ass:coeff-derivatives}{($\Sigma$2)}  and either \nref{ass:coeff-growth-one}{($\Sigma$3)} or \nref{ass:coeff-growth-two}{($\Sigma$3$'$)} hold respectively and recall Definition~\ref{def:coeff-const}.
	Then for all $1\leq i,j\leq d$, it holds that
	\[
		\|\partial_j a_{ij}-a_{ij}v_j\|_{L^2(\nu)}\leq N_\Sigma.
	\]
\end{proposition}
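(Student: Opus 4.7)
The plan is to reduce the $L^2(\nu)$-norm of $\partial_j a_{ij} - a_{ij} v_j$ to a sum of two simpler norms via Gaussian integration by parts, and then estimate each piece separately using the assumptions on $\Sigma$.

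First, I would exploit the identity $v_j\,\mathrm{e}^{-v^2/2} = -\partial_j \mathrm{e}^{-v^2/2}$, which means that the formal adjoint of $\partial_j$ in $L^2(\R^d,\nu)$ is $-\partial_j + v_j$. Expanding the squared norm gives
\[
\|\partial_j a_{ij} - a_{ij} v_j\|^2_{L^2(\nu)} = \int (\partial_j a_{ij})^2\,\mathrm{d}\nu - 2 \int (\partial_j a_{ij})\,(a_{ij} v_j)\,\mathrm{d}\nu + \int a_{ij}^2 v_j^2\,\mathrm{d}\nu.
\]
The key observation is that the Gaussian integration-by-parts formula $\int v_j g\,\mathrm{d}\nu = \int \partial_j g\,\mathrm{d}\nu$, applied to $g = a_{ij}^2 v_j$ with weak derivative $\partial_j g = 2 a_{ij}(\partial_j a_{ij}) v_j + a_{ij}^2$, yields
\[
\int a_{ij}^2 v_j^2\,\mathrm{d}\nu = \int a_{ij}^2\,\mathrm{d}\nu + 2\int a_{ij} v_j\,\partial_j a_{ij}\,\mathrm{d}\nu,
\]
so the two mixed terms cancel exactly and we obtain the clean identity
\[
\|\partial_j a_{ij} - a_{ij} v_j\|^2_{L^2(\nu)} = \|\partial_j a_{ij}\|^2_{L^2(\nu)} + \|a_{ij}\|^2_{L^2(\nu)}.
\]

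Second, I would bound each term from the assumptions. Boundedness of $a_{ij}$ immediately gives $\|a_{ij}\|^2_{L^2(\nu)} \leq M_\Sigma^2$. For $\|\partial_j a_{ij}\|^2_{L^2(\nu)}$ I split the integral over $\overline{B_1(0)}$ and its complement, using on $\overline{B_1(0)}$ the pointwise bound $B_\Sigma$ from Definition~\ref{def:coeff-const}. Under \nref{ass:coeff-growth-one}{($\Sigma$3)} the exponent $\beta\leq 0$ forces $M(1+|v|)^\beta\leq M$ globally, giving the uniform bound $B_\Sigma\vee M$ and hence $\|\partial_j a_{ij}\|^2\leq (B_\Sigma\vee M)^2$. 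Under \nref{ass:coeff-growth-two}{($\Sigma$3$'$)} I instead use $|\partial_j a_{ij}|^2\leq M^2|v|^{2\beta}\leq M^2|v|^2$ on $\{|v|>1\}$ (since $\beta<1$), so the outer contribution is at most $M^2\int|v|^2\,\mathrm{d}\nu = dM^2$, yielding $\|\partial_j a_{ij}\|^2\leq B_\Sigma^2+dM^2$. Adding the two estimates produces exactly $N_\Sigma^2$ in each case.

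The main technical obstacle is rigorously justifying the integration by parts in the first step: I must make sense of the weak derivative $\partial_j(a_{ij}^2 v_j) = 2 a_{ij}(\partial_j a_{ij}) v_j + a_{ij}^2$ and verify that both sides of $\int v_j g\,\mathrm{d}\nu = \int \partial_j g\,\mathrm{d}\nu$ are well-defined and finite for $g = a_{ij}^2 v_j$. This is where assumption \nref{ass:coeff-derivatives}{($\Sigma$2)} ($a_{ij}\in H^{1,p}_{\mathrm{loc}}(\R^d,\nu)\cap L^\infty$ with $p>d$) is essential, combined with the growth hypothesis on $\partial_j a_{ij}$, which ensures that the Gaussian weight absorbs the polynomial factor $v_j$ and any polynomial growth of $\partial_j a_{ij}$. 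A standard cutoff/approximation argument in $H^{1,2}(\R^d,\nu)$ against $\mathrm{e}^{-v^2/2}$ closes the computation.
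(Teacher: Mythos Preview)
Your proposal is correct and follows essentially the same approach as the paper: the Gaussian integration-by-parts identity reducing $\|\partial_j a_{ij}-a_{ij}v_j\|_{L^2(\nu)}^2$ to $\|\partial_j a_{ij}\|_{L^2(\nu)}^2+\|a_{ij}\|_{L^2(\nu)}^2$, followed by the same case-by-case bounds on $\|\partial_j a_{ij}\|_{L^2(\nu)}^2$. The paper does not spell out the justification of the integration by parts, so your discussion of that point is a welcome addition rather than a deviation.
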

\begin{proof}
	Due to integration by parts, it holds that
	\[
		\int_{\R^d}a_{ij}^2v_j^2\,\mathrm{d}\nu = \int_{\R^d} a_{ij}^2 + 2 a_{ij}v_j\partial_j a_{ij}\,\mathrm{d}\nu.
	\]
	Hence we obtain in the case \nref{ass:coeff-growth-two}{($\Sigma$3$'$)}
	\[
	\begin{aligned}
		\int_{\R^d} (\partial_j a_{ij}-a_{ij}v_j)^2\,\mathrm{d}\nu
		&= \int_{\R^d} (\partial_j a_{ij})^2 + a_{ij}^2\,\mathrm{d}\nu\\
		&\leq \int_{B_1(0)} (\partial_j a_{ij})^2\,\mathrm{d}\nu
			+ \int_{\R^d\setminus B_1(0)} (\partial_j a_{ij})^2\,\mathrm{d}\nu
			+ M_\Sigma^2\\
		&\leq B_\Sigma^2 + \int_{\R^d\setminus B_1(0)} (M|v|^\beta)^2\,\mathrm{d}\nu + M_\Sigma^2 \\
		&\leq B_\Sigma^2 + M_\Sigma^2 + \sum_{k=1}^d M^2\int_{\R^d} v_k^2\,\mathrm{d}\nu
		= B_\Sigma^2 + M_\Sigma^2 + M^2d.
	\end{aligned}
	\]
	The case \nref{ass:coeff-growth-one}{($\Sigma$3)} follows from $(\partial_j a_{ij})^2\leq (B_\Sigma\vee M)^2$.
\end{proof}

We now state the essential m-dissipativity result, which will be proven in the next section.
\begin{theorem}\label{thm:ess-m-diss}
	Let \nref{ass:ellipticity}{($\Sigma$1)}, \nref{ass:coeff-derivatives}{($\Sigma$2)} and either \nref{ass:coeff-growth-one}{($\Sigma$3)} or \nref{ass:coeff-growth-two}{($\Sigma$3$'$)} be fulfilled, and let $\Phi$ be as in \nref{ass:potential}{(P)}.
	Assume further that $\Phi$ is bounded from below and that $|\nabla\Phi|\in L^2(\R^d,\mathrm{e}^{-\Phi(x)}\,\mathrm{d}x)$.
	If $\beta$ is larger than $-1$,
	then assume additionally that there is some $N<\infty$ such that
	\[
		|\nabla\Phi(x)|\leq N(1+|x|^\gamma),\quad\text{ where }\gamma<\frac{2}{1+\beta}.
	\]
	Then the linear operator $(L,C_c^\infty(\R^{2d}))$ is essentially m-dissipative
	and hence its closure $(L,D(L))$ generates a strongly continuous contraction semigroup on $H$.
	In particular, the conditions (D2)-(D4) are satisfied.
\end{theorem}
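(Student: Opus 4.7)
The plan is to follow the two-step perturbation strategy announced in the introduction: first establish essential self-adjointness, and thereby essential m-dissipativity, of the symmetric part $(S, D)$, and then add back the antisymmetric transport part $A$ by a perturbation/approximation argument in the spirit of \cite[Theorem 2.1]{CG10}.

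For the first step, I would exploit the product structure $\mu = \mathrm{e}^{-\Phi(x)}\,\mathrm{d}x \otimes \nu$ together with the fact that $S = \tr(\Sigma H_v) + b(v)\cdot\nabla_v$ acts only in the velocity variable. On $H = L^2(\R^d, \mathrm{e}^{-\Phi(x)}\,\mathrm{d}x) \otimes L^2(\R^d, \nu)$ one may therefore write $S = I \otimes S_v$, and under \nref{ass:ellipticity}{($\Sigma$1)}, \nref{ass:coeff-derivatives}{($\Sigma$2)} and either of the growth hypotheses, $(S_v, C_c^\infty(\R^d))$ is a non-degenerate elliptic Dirichlet operator on $L^2(\R^d, \nu)$ to which Theorem \ref{thm:ess-self-adjoint} applies, yielding essential self-adjointness. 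A standard tensorization argument then promotes this to essential self-adjointness of $(S, D)$ on $H$; combined with the non-positivity $(Sf,f)_H \leq 0$ for $f \in D$ (immediate from integration by parts against the Gaussian weight in $v$), this yields essential m-dissipativity of $(S, D)$.

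For the second step, dissipativity of $(L, D)$ is immediate from $(Af, f)_H = 0$, so what remains is the range condition $\overline{(I - L)(D)} = H$. Following the blueprint of \cite[Theorem 2.1]{CG10}, the idea is to approximate $\nabla\Phi$ by a sequence $\nabla\Phi_n$ with bounded coefficients---for instance, by smoothly truncating the radial dependence at scale $n$---and to consider the perturbed operators $L_n = S - A_n$, where $A_n = \nabla\Phi_n\cdot\nabla_v - v\cdot\nabla_x$. For each fixed $n$ the vector field $\nabla\Phi_n$ is bounded, which makes $A_n$ a comparatively mild antisymmetric perturbation and allows one to prove essential m-dissipativity of $(L_n, D)$ by a Kato-type relative bound argument combined with the essential m-dissipativity of $S$ established in step one. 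One then shows that the resolvents $R_n \defeq (I - L_n)^{-1}$ converge strongly on a dense subset of $H$ to a resolvent of $L$; this yields density of $(I - L)(D)$ and hence essential m-dissipativity of $(L, D)$.

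The hard part will be this final limiting step, and this is precisely where the growth hypothesis $|\nabla\Phi(x)|\leq N(1+|x|^\gamma)$ with $\gamma < 2/(1+\beta)$ enters. In the constant-diffusion setting of \cite{CG10} the analogous truncation error $(A - A_n)R_n g$ is controlled using uniform ellipticity to absorb the $\nabla_v$ factor and the Gaussian weight in $v$ to absorb factors of $|v|$. Here the second-order coefficients $a_{ij}$ may have derivatives growing like $(1+|v|)^\beta$, so the $\nabla_v$ factor can only be absorbed at the cost of an additional weight $(1+|v|)^\beta$; the transport term $v\cdot\nabla_x$ then trades this growth in $v$ for a compensating growth in $x$, and the exponent $\gamma < 2/(1+\beta)$ is exactly what is required to keep the estimate uniform in $n$. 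Proposition \ref{prop:b-bound} will be used along the way to handle the $b(v)\cdot\nabla_v$ term that is genuinely new compared to the constant-diffusion setting. Once $(I - L)(D)$ is shown to be dense, the Lumer--Phillips theorem produces the strongly continuous contraction semigroup, and (D2)--(D4) follow immediately.
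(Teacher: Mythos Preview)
Your first step matches the paper's approach exactly: apply Theorem~\ref{thm:ess-self-adjoint} to the velocity-only operator on $L^2(\R^d,\nu)$ and tensorize.

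Your second step, however, has a genuine gap. You propose to obtain essential m-dissipativity of $L_n = S - A_n$ (with $\nabla\Phi_n$ bounded) by a Kato-type relative bound of $A_n$ against $S$. This cannot work: $S$ acts only in the $v$-variables, so any inequality of the form $\|A_n f\|_H \leq a\|Sf\|_H + b\|f\|_H$ gives no control whatsoever over the term $v\cdot\nabla_x f$. The transport in $x$ is invisible to $S$, and no amount of boundedness of $\nabla\Phi_n$ repairs this. The paper therefore does \emph{not} perturb $S$ directly by $A$. Instead it conjugates by the unitary $T f = \mathrm{e}^{v^2/4 + \Phi/2} f$ to pass to $L^2(\R^{2d},\mathrm{d}(x,v))$, uses a family of orthogonal projections onto shells $\{n-1 \leq |x| < n\}$ together with Lemma~\ref{lem:perturbation-projection} to add the multiplication operator $ivx$ to $S_1 = T^{-1}ST$ (Proposition~\ref{prop:no-potential}), and only then applies Fourier transform in $x$ to convert $ivx$ into the transport term $v\cdot\nabla_x$. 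The potential term $\nabla\Phi\cdot\nabla_v$ is added last, and at that stage a genuine Kato bound does work because $\nabla_v$ is controlled by $S_1$.

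A second, smaller issue: you only truncate $\Phi$, but the paper also truncates $\Sigma$ to $\Sigma_n$, because the projection argument in Proposition~\ref{prop:no-potential} requires $\partial_j a_{ij}(v)\,v_i$ to be bounded, i.e.\ effectively $\beta \leq -1$. Without this additional approximation your ``nice'' operators $L_n$ would not be covered by the base case. The passage from the nice operators $L_{n,m}$ back to $L$ is then done not by resolvent convergence but by a direct cutoff estimate (Lemma~\ref{lem:even-more-technical}), and it is in balancing the cutoff scales in $x$ and $v$ that the condition $\gamma < 2/(1+\beta)$ appears---your heuristic about the origin of this exponent is morally right, but the actual mechanism runs through Lemma~\ref{lem:technical-estimate} rather than through a resolvent difference.
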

Let us now introduce the orthogonal projections $P_S$ and $P$:
\begin{definition}
	Define $P_S:H\to H$ as
	\[
		P_Sf = \int_{\R^d} f \,\mathrm{d}\nu(v),\qquad f\in H,
	\]
	where integration is understood w.r.t~the velocity variable $v$.
	By Fubini's theorem and the fact that $\nu$ is a probability measure on $(E,\mathcal{F})$, it follows that $P_S$ is a well-defined orthogonal projection on $H$ with
	\[
		P_Sf\in L^2(\R^d,\mathrm{e}^{-\Phi(x)}\,\mathrm{d}x),\quad
		\|P_Sf\|_{L^2(\R^d,\mathrm{e}^{-\Phi(x)}\,\mathrm{d}x)} = \|P_Sf\|_H,\quad
		f\in H,
	\]
	where $L^2(\R^d,\mathrm{e}^{-\Phi(x)}\,\mathrm{d}x)$ is interpreted as embedded in $H$.
	
	Then define $P:H\to H$ via $Pf= P_Sf-(f,1)_H$ for $f\in H$. Again, $P$ is an orthogonal projection on $H$ with
	\[
		Pf\in L^2(\R^d,\mathrm{e}^{-\Phi(x)}\,\mathrm{d}x),\quad
		\|Pf\|_{L^2(\R^d,\mathrm{e}^{-\Phi(x)}\,\mathrm{d}x)} = \|Pf\|_H,\quad
		f\in H.
	\]
	Additionally, for each $f\in D$, $P_S f$ admits a unique representation in $C_c^\infty (\R^d)$,
	which we will denote by $f_S\in C_c^\infty (\R^d)$.
\end{definition}
In order to show the last remaining conditions (D5) and (D7),
we will make use of a standard sequence of cutoff functions as specified below:
\begin{definition}\label{def:cutoff}
	Let $\varphi\in C_c^\infty(\R^d)$ such that $0\leq\varphi\leq1$, $\varphi=1$ on $B_1(0)$ and $\varphi=0$ outside of $B_2(0)$. Define $\varphi_n(z) \defeq \varphi(\frac{z}{n})$ for each $z\in\R^d$, $n\in\N$.
	Then there exists a constant $C<\infty$ independent of $n\in\N$ such that
	\[
		|\partial_i\varphi_n(z)|\leq \frac{C}{n},\ 
		|\partial_{ij}\varphi_n(z)| \leq \frac{C}{n^2}\quad
		\text{ for all } z\in\R^d, 1\leq i,j \leq d.
	\]
	Moreover $0\leq\varphi_n\leq 1$ for all $n\in\N$
	and $\varphi_n\to 1$ pointwisely on $\R^d$ as $n\to\infty$.
\end{definition}
\begin{lemma}
	Let \nref{ass:coeff-derivatives}{($\Sigma$2)} and either \nref{ass:coeff-growth-one}{($\Sigma$3)} or \nref{ass:coeff-growth-two}{($\Sigma$3$'$)} be fulfilled, and let $\Phi$ be as in \nref{ass:potential}{(P)}. Then the operator $L$ satisfies the following:
	\begin{enumerate}[(\roman*)]
		\item $P(H)\subset D(S)$ with $SPf=0$ for all $f\in H$,
		\item $P(D)\subset D(A)$ and $APf= -v\cdot\nabla_x(P_Sf)$,
		\item $AP(D)\subset D(A)$ with $A^2Pf = \langle v,\nabla_x^2(P_Sf)v \rangle - \nabla\Phi\cdot\nabla_x(P_Sf)$.
		\item It holds $1\in D(L)$ and $L1=0$.
	\end{enumerate}
	In particular, (D5) and (D7) are fulfilled.
\end{lemma}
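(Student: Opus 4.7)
Strategy. All four claims follow the same template: approximate the target function by a sequence in $D=C_c^\infty(\R^{2d})$ built from the cutoffs $\varphi_n$ of Definition~\ref{def:cutoff}, compute the operator image, check convergence in $H$ via dominated convergence and the bounds $|\partial_i\varphi_n|\leq C/n$, $|\partial_{ij}\varphi_n|\leq C/n^2$, then appeal to closedness of $S$, $A$, or $L$. The recurring ingredients are boundedness of the $a_{ij}$ (parameter $M_\Sigma$), Proposition~\ref{prop:b-bound} giving $\|b_i\|_{L^2(\nu)}\leq N_\Sigma$, the fact that $P_Sf=\int f(\cdot,v)\,\mathrm{d}\nu(v)\in C_c^\infty(\R^d)$ for $f\in D$ (Fubini and differentiation under the integral, with compact $x$-support inherited from $f$), and local essential boundedness of $\nabla\Phi$ on compact sets from local Lipschitz-ness in \nref{ass:potential}{(P)}. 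I plan to carry out the parts in the order (i), (iii), then (ii) and (iv) jointly, since the last two share a technical difficulty absent from the first two.

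For (i) the reduction is: show any $v$-independent $\phi\in C_c^\infty(\R^d)$ lies in $D(S)$ with $S\phi=0$. I would take $g_n=\phi(x)\varphi_n(v)\in D$; since $S$ only differentiates in $v$, $Sg_n=\phi\cdot S_v\varphi_n$, and $\|S_v\varphi_n\|_{L^2(\nu)}\to 0$ via the recurring ingredients (the second-order part is $O(1/n^2)$ by boundedness of the $a_{ij}$, the first-order part is $O(1/n)$ by $b_i\in L^2(\nu)$). Together with $g_n\to\phi$ in $H$ (dominated convergence) and closedness of $S$ this gives $\phi\in D(S)$, $S\phi=0$; density of $C_c^\infty(\R^d)$ in $L^2(\R^d,\mathrm{e}^{-\Phi(x)}\,\mathrm{d}x)$ and closedness of $S$ then extend the statement to every $Pf$. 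For (iii), the limit $APf=-v\cdot\nabla_x P_Sf$ is compactly supported in $x$; I would use $h_n(x,v)=-v\cdot\nabla_x P_Sf(x)\,\varphi_n(v)\in D$ and compute
\[
Ah_n=-\varphi_n\,\nabla\Phi\cdot\nabla_x P_Sf+\varphi_n\langle v,\nabla_x^2(P_Sf)v\rangle-(v\cdot\nabla_x P_Sf)\,\nabla\Phi\cdot\nabla_v\varphi_n.
\]
The first two summands converge in $H$ by dominated convergence to the stated limits (using essential boundedness of $\nabla\Phi$ on $\mathrm{supp}(\nabla_x P_Sf)$ and that $|v|,|v|^2\in L^2(\nu)$), while the third vanishes in $H$ at rate $O(1/n)$ from the same compact support together with $|\nabla_v\varphi_n|\leq C/n$. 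Closedness of $A$ finishes.

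Parts (ii) and (iv) are the subtler ones because the target is not compactly supported in $x$: $Pf=P_Sf-(f,1)_H$ carries a constant shift, and $1$ is itself constant. The naive cutoff $\varphi_n(x)\varphi_n(v)$ generates the problematic term $\varphi_n(x)\,\nabla\Phi\cdot\nabla_v\varphi_n(v)$, whose squared $H$-norm factors as at most $\int_{B_{2n}}|\nabla\Phi|^2\mathrm{e}^{-\Phi(x)}\,\mathrm{d}x\cdot dC^2/n^2$, and without global integrability of $|\nabla\Phi|^2\mathrm{e}^{-\Phi(x)}$ this need not vanish. The fix I would use is a diagonal auxiliary cutoff: choose $m(n)\to\infty$ slowly enough that $\int_{B_{2m(n)}}|\nabla\Phi|^2\mathrm{e}^{-\Phi(x)}\,\mathrm{d}x\leq n$, which is always possible because each such local integral is finite by local boundedness of $\nabla\Phi$. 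For (iv) take $\chi_n=\varphi_{m(n)}(x)\varphi_n(v)\in D$: then $\chi_n\to 1$ in $H$, $\|S\chi_n\|_H\to 0$ as in (i), and both summands of $A\chi_n$ vanish in $H$ (the $v\cdot\nabla_x\varphi_{m(n)}$-summand at rate $O(1/m(n))$, and the $\nabla\Phi\cdot\nabla_v\varphi_n$-summand at rate $O(1/\sqrt{n})$ by the choice of $m(n)$), so closedness of $L$ yields $1\in D(L)$, $L1=0$. For (ii) take $g_n=[P_Sf(x)-(f,1)_H\,\varphi_{m(n)}(x)]\varphi_n(v)\in D$; an analogous expansion of $Ag_n$ produces one surviving term $-\varphi_n\varphi_{m(n)}\,v\cdot\nabla_x P_Sf\to -v\cdot\nabla_x P_Sf$ in $H$, while the three remaining terms are controlled exactly as above (the compactly supported $P_Sf$-piece at rate $O(1/n)$, the $\nabla_x\varphi_{m(n)}$-piece at rate $O(1/m(n))$, and the delicate $(f,1)_H\,\varphi_{m(n)}\nabla\Phi\cdot\nabla_v\varphi_n$-piece by the diagonal estimate). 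Closedness of $A$ then delivers (ii).

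The hard part is precisely the interaction of $\nabla\Phi\cdot\nabla_v\varphi_n$ with the constant pieces of the target in (ii) and (iv); the $\varphi_{m(n)}$-diagonal device is what allows the approximation to go through using only the local integrability of $\nabla\Phi$ supplied by \nref{ass:potential}{(P)}. With (i)--(iii) giving \nref{ass:data}{(D5)} and (iv) giving \nref{ass:data}{(D7)}, the lemma follows.
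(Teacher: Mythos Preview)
Your argument for part~(i) is exactly the paper's: cut off in $v$ with $\varphi_n$, use $a_{ij}\in L^\infty$ and $b_i\in L^2(\nu)$ to get $S(\phi(x)\varphi_n(v))\to 0$, then pass to general $g\in P(H)$ by density of $C_c^\infty(\R^d)$ in $L^2(\mathrm{e}^{-\Phi}\,\mathrm{d}x)$ and closedness of $S$. For parts (ii)--(iv) the paper simply writes ``the other parts can be shown exactly as in \cite{GS16}'' and gives no details, so your explicit arguments fill in what the paper omits.

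Your treatment of (ii) and (iv) contains a refinement worth pointing out. The standard argument in \cite{GS16} (and the one implicitly invoked here) uses the symmetric cutoff $\varphi_n(x)\varphi_n(v)$ and controls the cross term $\varphi_n(x)\nabla\Phi(x)\cdot\nabla_v\varphi_n(v)$ via $\|\nabla\Phi\|_{L^2(\mathrm{e}^{-\Phi}\,\mathrm{d}x)}\cdot O(1/n)$; this is legitimate in the paper's applications because $\nabla\Phi\in L^2(\mathrm{e}^{-\Phi}\,\mathrm{d}x)$ is always in force there (it follows from (C1)+(C3), and is assumed in Theorem~\ref{thm:ess-m-diss}). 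You correctly observe that the lemma as stated only assumes \nref{ass:potential}{(P)}, under which global $L^2$-integrability of $\nabla\Phi$ is not available, and your diagonal device---choosing $m(n)\to\infty$ slowly so that $\int_{B_{2m(n)}}|\nabla\Phi|^2\mathrm{e}^{-\Phi}\,\mathrm{d}x\leq n$, which is possible by local boundedness of $\nabla\Phi$---is precisely what is needed to make the approximation work at that level of generality. So your proof is correct under the stated hypotheses and slightly more self-contained than the reference the paper cites.

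Two minor remarks: your ordering (iii) before (ii) is harmless because the computation in (iii) really shows $-v\cdot\nabla_x P_Sf\in D(A)$ with the stated image, independent of whether this function equals $APf$; once (ii) is established the identification is immediate. And in your expansion of $Ag_n$ for (ii), the surviving term is $-\varphi_n\,v\cdot\nabla_x P_Sf$ without the extra $\varphi_{m(n)}$ factor (though for large $n$ that factor equals $1$ on $\supp P_Sf$, so the limit is unaffected).
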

\begin{proof}
	We only show (i), as the other parts can be shown exactly as in \cite{GS16}.
	First, let $f\in C_c^\infty(\R^d)$ and define $f_n\in D$ via $f_n(x,y)\defeq f(x)\varphi_n(v)$.
	Then by Lebesgue's dominated convergence theorem and the inequalities in the previous definition,
	\[
		Sf_n= f\cdot\left(\sum_{i,j=1}^d a_{ij}\partial_{ij}\varphi_n + \sum_{i=1}^d b_i\partial_i\varphi_n\right) \to 0\quad\text{ in $H$ as }n\to\infty,
	\]
	since $a_{ij}\in L^\infty(\R^d)\subset L^2(\R^d,\nu)$, $|v|\in L^2(\R^d,\nu)$ and
	$\partial_j a_{ij}\in L^2(\R^d,\nu)$ for all $1\leq i,j\leq d$.
	
	Since $f_n\to f$ in $H$ and by closedness of $(S,D(S))$, this implies
	$f\in D(S)$ with $Sf=0$, where $f$ is interpreted as an element of $H$.
	
	Now let $g\in P(H)$ and identify $g$ as an element of $L^2(\R^d,\mathrm{e}^{-\Phi(x)}\,\mathrm{d}x)$.
	Then there exist $g_n\in C_c^\infty(\R^d)$ with
	$g_n\to g$ in $L^2(\R^d,\mathrm{e}^{-\Phi(x)}\,\mathrm{d}x)$ as $n\to\infty$.
	Identifying all $g_n$ and $g$ with elements in $H$ then yields
	$g_n\to g$ in $H$ as $n\to\infty$ and $g_n\in D(S)$, $Sg_n=0$ for all $n\in \N$.
	Therefore, again by closedness of $(S,D(S))$, $g\in D(S)$ and $Sg=0$.
\end{proof}

\subsection{The hypocoercivity conditions}
Now we verify the hypocoercivity conditions \nref{ass:algebraic}{(H1)}-\nref{ass:aux-bound}{(H4)} for the operator $L$.
From here on, we will assume $\Sigma$ to satisfy \nref{ass:ellipticity}{($\Sigma$1)}, \nref{ass:coeff-derivatives}{($\Sigma$2)} and either \nref{ass:coeff-growth-one}{($\Sigma$3)} or \nref{ass:coeff-growth-two}{($\Sigma$3$'$)}, with $N_\Sigma$ referring to the appropriate constant
as in Definition~\ref{def:coeff-const}.
Analogously to \cite{GS16} we introduce the following conditions:
\begin{hypoc_potential}[\hypertarget{ass:hypoc-potential}{C1)-(C3}]We require the following assumptions on $\Phi:\R^d\to\R$:
	\begin{enumerate}[(C1)]
		\item The potential $\Phi$ is bounded from below, is an element of $C^2(\R^d)$ and
			$\mathrm{e}^{-\Phi(x)}\,\mathrm{d}x$ is a probability measure on $(\R^d,\mathcal{B}(\R^d))$.
		\item The probability measure $\mathrm{e}^{-\Phi(x)}\,\mathrm{d}x$ satisfies a Poincaré inequality
			of the form
			\[
				\|\nabla f\|_{L^2(\mathrm{e}^{-\Phi(x)}\,\mathrm{d}x)}^2\geq
				\Lambda \|f-(f,1)_{L^2(\mathrm{e}^{-\Phi(x)}\,\mathrm{d}x)}\|_{L^2(\mathrm{e}^{-\Phi(x)}\,\mathrm{d}x)}^2
			\]
			for some $\Lambda\in (0,\infty)$ and all $f\in C_c^\infty(\R^d)$.
		\item There exists a constant $c<\infty$ such that
			\[
				|\nabla^2\Phi(x)| \leq c(1+|\nabla\Phi(x)|)
				\quad\text{ for all }x\in\R^d.
			\]
	\end{enumerate}
	Note that in particular, (C1) implies \nref{ass:potential}{(P)}.
	As shown in \cite[Lemma A.24]{Villani}, conditions (C3) and (C1) imply $\nabla\Phi\in L^2(\mathrm{e}^{-\Phi(x)}\,\mathrm{d}x)$.
\end{hypoc_potential}
Since we only change the operator $(S,D(S))$ in comparison to the framework of \cite{GS16},
the results stated there involving only $(A,D(A))$ and the projections also hold here and are collected as follows:
\begin{proposition} Let $\Phi$ satisfy \nref{ass:potential}{(P)}. Then the following hold:
	\begin{enumerate}[(i)]
		\item Assume additionally $\nabla\Phi\in L^2(\mathrm{e}^{-\Phi(x)}\,\mathrm{d}x)$. Then \nref{ass:algebraic}{(H1)} is fulfilled.
		\item Assume that $\Phi$ satisfies (C1) and that $\nabla\Phi\in L^2(\mathrm{e}^{-\Phi(x)}\,\mathrm{d}x)$.
			Then the operator $(G,D)$ defined by $G\defeq PA^2P$ is essentially self-adjoint,
			equivalently essentially m-dissipative. For $f\in D$, it holds
			\[
				Gf = PAAPf = \Delta f_S - \nabla\Phi\cdot \nabla f_S.
			\]
		\item Assume that $\Phi$ satisfies (C1) and (C2) as well as $\nabla\Phi\in L^2(\mathrm{e}^{-\Phi(x)}\,\mathrm{d}x)$.
			Then \nref{ass:macro-coerc}{(H3)} holds with $\Lambda_M = \Lambda$.
		\item Assume that $\Phi$ satisfies (C1)-(C3). Then the second estimate in \nref{ass:aux-bound}{(H4)} is satisfied,
			and the constant there is given as $c_2 = c_\Phi\in [0,\infty)$, which only depends on the choice of $\Phi$.
	\end{enumerate}
\end{proposition}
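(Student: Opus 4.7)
The plan is to observe that every one of the four claims in this proposition involves only the antisymmetric part $(A,D(A))$, the projections $P$ and $P_S$, and the potential $\Phi$; none of them mentions the diffusion $\Sigma$ or the symmetric operator $S$. Consequently the arguments of \cite{GS16} apply essentially verbatim, and the plan is simply to reproduce them, fixing notation and making sure that the mild regularity available for $P(H)$ and for elements of $D=C_c^\infty(E)$ suffices.

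For (i), I would take $f\in D$, note that $P_Sf\in C_c^\infty(\R^d)$ is a function of $x$ alone, so $\nabla_v(P_Sf)=0$, whence
\begin{equation*}
APf = A\bigl(P_Sf-(f,1)_H\bigr) = -v\cdot\nabla_x(P_Sf) \in H.
\end{equation*}
Applying $P$ uses $\int_{\R^d} v\,\mathrm{d}\nu(v)=0$ to conclude $PAPf=0$. For (ii), I would differentiate once more: starting from $APf=-v\cdot\nabla_x f_S$ one obtains
\begin{equation*}
A^2Pf = \langle v,H_xf_S\,v\rangle - \nabla\Phi\cdot\nabla_x f_S,
\end{equation*}
and then integrating in $v$ with $\int v_iv_j\,\mathrm{d}\nu=\delta_{ij}$ gives $P_S(A^2Pf)=\Delta f_S-\nabla\Phi\cdot\nabla f_S$, while integration by parts in $x$ against $\mathrm{e}^{-\Phi}\,\mathrm{d}x$ shows $(A^2Pf,1)_H=0$. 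The formula $Gf=\Delta f_S-\nabla\Phi\cdot\nabla f_S$ then identifies $G$ with a Dirichlet operator on $L^2(\R^d,\mathrm{e}^{-\Phi(x)}\,\mathrm{d}x)$ whose essential self-adjointness is exactly an instance of Theorem~\ref{thm:ess-self-adjoint} (alternatively \cite{BKR97}) under (C1) together with $\nabla\Phi\in L^2(\mathrm{e}^{-\Phi(x)}\,\mathrm{d}x)$.

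For (iii), I would use the expression for $APf$ and Fubini together with $\int v_iv_j\,\mathrm{d}\nu=\delta_{ij}$ to obtain $\|APf\|_H^2=\|\nabla_xf_S\|_{L^2(\mathrm{e}^{-\Phi}\,\mathrm{d}x)}^2$. Because $Pf$ identified in $L^2(\R^d,\mathrm{e}^{-\Phi}\,\mathrm{d}x)$ is exactly $f_S$ minus its mean, the Poincaré inequality (C2) applied to $f_S$ gives $\|APf\|_H^2\geq \Lambda\|Pf\|_H^2$, so $\Lambda_M=\Lambda$ is admissible. For (iv), I would invoke Lemma~\ref{lem:auxiliary-bound} with $T=A(I-P)$, reducing the task to bounding $(BA(I-P))^*(I-G)f=(A(I-P))^*APf$ in terms of $\|(I-G)f\|$ for $f\in D$. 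Because $A$ is independent of $\Sigma$, the computation is identical to that of \cite{GS16}: one uses (C1)--(C3) to control second derivatives of $\Phi$ by $|\nabla\Phi|$ and thus to absorb all resulting terms into a multiple of $\|(I-G)f\|$.

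The main obstacle I anticipate is purely bookkeeping: one must check that each intermediate quantity (for instance $APf\in D(A)$ in (ii), or the boundary terms in the integration by parts for $(A^2Pf,1)_H$) is justified with the regularity we have on $\Phi$, since only $\Phi\in C^2$ and $\nabla\Phi\in L^2(\mathrm{e}^{-\Phi}\,\mathrm{d}x)$ are assumed. This is handled by approximating elements of $P(H)$ by $C_c^\infty(\R^d)$ functions in $L^2(\mathrm{e}^{-\Phi}\,\mathrm{d}x)$, lifting to $H$, and using closedness of $A$ and $G$. With those approximations in hand, no new argument beyond \cite{GS16} is needed.
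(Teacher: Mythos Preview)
Your proposal is correct and follows exactly the approach the paper takes: the paper does not give an independent proof of this proposition but simply refers to \cite{GS16}, noting that since only the symmetric part $S$ differs from that reference, all statements involving only $A$, $P$, $P_S$ and $\Phi$ carry over unchanged. Your sketch reproduces precisely those arguments from \cite{GS16}, so there is nothing to add.
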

It remains to show \nref{ass:micro-coerc}{(H2)} and the first half of \nref{ass:aux-bound}{(H4)}:
\begin{proposition}
	Let $\Phi$ be as in \nref{ass:potential}{(P)}.
	Then Condition \nref{ass:micro-coerc}{(H2)} is satisfied with $\Lambda_m=c_\Sigma$.
\end{proposition}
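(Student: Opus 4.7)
The plan is to reduce (H2) to the Gaussian Poincaré inequality, by first identifying $-(Sf,f)_H$ with a Dirichlet form in the velocity variable. The drift $b_i=\sum_j(\partial_j a_{ij}-a_{ij}v_j)$ is exactly the logarithmic derivative correction making $S$ formally symmetric and non-positive with respect to $\nu$, so I expect the computation to collapse cleanly.

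Concretely, I would fix $f\in D=C_c^\infty(\R^{2d})$ and, since $S$ differentiates only in $v$ and $\mu=\mathrm{e}^{-\Phi(x)}\mathrm{d}x\otimes\nu$ factorises, work pointwise in $x$ and integrate against $\nu$ in $v$. For the second-order part, integration by parts in $v_i$ produces three terms:
\[
\int a_{ij}(\partial_{v_iv_j}f)f\,\mathrm{d}\nu
=-\int(\partial_i a_{ij})f\,\partial_{v_j}f\,\mathrm{d}\nu
-\int a_{ij}\,\partial_{v_i}f\,\partial_{v_j}f\,\mathrm{d}\nu
+\int a_{ij}v_i f\,\partial_{v_j}f\,\mathrm{d}\nu,
\]
the boundary term $v_i$ coming from differentiating the Gaussian density. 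Summing over $i,j$ and using the symmetry $a_{ij}=a_{ji}$ to relabel indices, the first and third terms cancel exactly with $\int b_i(\partial_{v_i}f)f\,\mathrm{d}\nu=\sum_{i,j}\int(\partial_j a_{ij})f\,\partial_{v_i}f\,\mathrm{d}\nu-\sum_{i,j}\int a_{ij}v_j f\,\partial_{v_i}f\,\mathrm{d}\nu$. What remains is
\[
-(Sf,f)_H=\int_{\R^{2d}}\sum_{i,j=1}^d a_{ij}(v)\,\partial_{v_i}f\,\partial_{v_j}f\,\mathrm{d}\mu.
\]

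Now uniform strict ellipticity from \nref{ass:ellipticity}{($\Sigma$1)} gives the pointwise bound $\sum_{i,j}a_{ij}(v)\partial_{v_i}f\,\partial_{v_j}f\geq c_\Sigma|\nabla_v f|^2$, so
\[
-(Sf,f)_H\geq c_\Sigma\int_{\R^{2d}}|\nabla_v f|^2\,\mathrm{d}\mu.
\]
Finally, for $\mathrm{d}x$-a.e. fixed $x$ the standard Gaussian Poincaré inequality on $\R^d$ (with sharp constant $1$) applied to $v\mapsto f(x,v)$ yields
\[
\int_{\R^d}|\nabla_v f(x,v)|^2\,\mathrm{d}\nu\geq \int_{\R^d}\bigl(f(x,v)-(P_Sf)(x)\bigr)^2\,\mathrm{d}\nu,
\]
since $(P_Sf)(x)$ is precisely the $\nu$-mean of $f(x,\cdot)$. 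Integrating this against $\mathrm{e}^{-\Phi(x)}\mathrm{d}x$ gives $\|\nabla_v f\|_H^2\geq\|(I-P_S)f\|_H^2$, which combined with the previous estimate proves (H2) with $\Lambda_m=c_\Sigma$.

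The only subtle step is bookkeeping the cancellations in the integration by parts, since both the symmetry of $\Sigma$ and the specific form of $b$ are needed; once the Dirichlet-form identity is in hand, the rest is ellipticity plus the (constant-$1$) Gaussian Poincaré inequality and requires no hypothesis on $\Phi$ beyond \nref{ass:potential}{(P)}.
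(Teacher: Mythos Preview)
Your proof is correct and follows essentially the same approach as the paper: integration by parts to obtain the Dirichlet-form identity $-(Sf,f)_H=\int_E(\nabla_v f,\Sigma\nabla_v f)_{\mathrm{euc}}\,\mathrm{d}\mu$, then uniform ellipticity followed by the Gaussian Poincar\'e inequality applied in the $v$-variable for each fixed $x$. The only difference is that you spell out the cancellation between the second-order terms and the drift $b$, whereas the paper invokes this identity directly (it was already recorded right after Definition~\ref{def:operators}).
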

\begin{proof}
	Let $g\in C_c^\infty(\R^d)$. The Poincaré inequality for Gaussian measures, see for example \cite{GaussPoincare}, states
	\[
		\|\nabla g\|_{L^2(\nu)}^2 \geq \left\| g-\int_{\R^d} g(v)\,\mathrm{d}\nu(v) \right\|_{L^2(\nu)}^2.
	\]
	Therefore, integration by parts yields for all $f\in D$:
	\[
	\begin{aligned}
		(-Sf,f)_H &= \int_E \langle \nabla_v f, \Sigma\nabla_v f\rangle \,\mathrm{d}\mu
			\geq \int_{\R^d} \int_{\R^d} c_\Sigma |\nabla_v f(x,v)|^2\,\mathrm{d}\nu\,\mathrm{e}^{-\Phi(x)}\,\mathrm{d}x\\
			&\geq c_\Sigma \int_{\R^d} \int_{\R^d} (f-P_Sf)^2\,\mathrm{d}\nu\,\mathrm{e}^{-\Phi(x)}\,\mathrm{d}x
			= c_\Sigma \|(I-P_S)f\|_H^2
	\end{aligned}
	\]
\end{proof}
Finally, we verify the first part of \nref{ass:aux-bound}{(H4)}:
\begin{proposition}\label{prop:aux-bound-sym}
	Assume that $\Phi$ satisfies (C1) and (C2) as well as $\nabla\Phi\in L^2(\mathrm{e}^{-\Phi}\,\mathrm{d}x)$.
	Then the first inequality of \nref{ass:aux-bound}{(H4)} is also satisfied with $c_1=d_\Sigma\defeq\sqrt{2d^3}N_\Sigma$.
\end{proposition}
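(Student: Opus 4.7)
The strategy is to invoke Lemma~\ref{lem:auxiliary-bound} with $T=S$, which by the final sentence of that lemma reduces the first inequality of \nref{ass:aux-bound}{(H4)} to the adjoint bound $\|(BS)^*g\|\leq c_1\|g\|$ for $g=(I-G)f$, $f\in D$. Two things have to be checked: first the domain condition $AP(D)\subset D(S^*)$, and second the effective norm bound on $S\,APf$ in terms of $\|(I-G)f\|_H$.

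For the domain condition, recall from the previous lemma that $APf=-v\cdot\nabla_x f_S$ with $f_S\in C_c^\infty(\R^d)$, so $APf$ is a finite sum of terms of the form $c(x)v_k$ with $c\in C_c^\infty(\R^d)$. I would show these lie in $D(\bar S)\subset D(S^*)$ by mollifying in $v$ using the cutoff sequence $\varphi_n$ of Definition~\ref{def:cutoff}: the approximants $c(x)v_k\varphi_n(v)$ belong to $D$, converge in $H$ to $c(x)v_k$, and $S$ applied to them converges in $H$ by dominated convergence using $a_{ij}\in L^\infty$, $|v|\in L^2(\nu)$ and the bound on $b$ from Proposition~\ref{prop:b-bound} — analogous to the cutoff computation already carried out in the verification of (D5).

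Next, since $\partial_{v_j}\partial_{v_i}(v_k\,c(x))=0$, applying $S$ to $APf$ kills the second-order part and only the first-order drift $\sum_i b_i\partial_{v_i}$ survives, giving the clean identity
\begin{equation*}
SAPf \;=\; -\sum_{k=1}^d b_k(v)\,\partial_{x_k}f_S(x)\;=\;-\,b(v)\cdot\nabla_x f_S(x).
\end{equation*}
Since $b$ depends only on $v$ and $\nabla_x f_S$ only on $x$, a pointwise Cauchy--Schwarz together with Fubini and the product structure $\mu=\mathrm{e}^{-\Phi(x)}\,\mathrm{d}x\otimes\nu$ yields
\begin{equation*}
\|SAPf\|_H^2 \;\leq\; \Bigl(\int_{\R^d}|b(v)|^2\,\mathrm{d}\nu\Bigr)\;\|\nabla_x f_S\|_{L^2(\mathrm{e}^{-\Phi}\mathrm{d}x)}^2.
\end{equation*}
Two estimates then feed in: (a) by Cauchy--Schwarz over $j$ and Proposition~\ref{prop:b-bound}, $\|b_k\|_{L^2(\nu)}^2\leq d\sum_j\|\partial_j a_{kj}-a_{kj}v_j\|_{L^2(\nu)}^2\leq d^2N_\Sigma^2$, so summing over $k$ gives the velocity factor $\leq d^3N_\Sigma^2$; (b) using $\int v_iv_j\,\mathrm{d}\nu=\delta_{ij}$, one has $\|\nabla_xf_S\|_{L^2(\mathrm{e}^{-\Phi}\mathrm{d}x)}^2=\|APf\|_H^2=-(Gf,f)_H$, and from $\|(I-G)f\|_H^2=\|f\|_H^2-2(Gf,f)_H+\|Gf\|_H^2$ I get $\|APf\|_H^2\leq\tfrac12\|(I-G)f\|_H^2$, exhibiting the factor $2$ inside $d_\Sigma=\sqrt{2d^3}N_\Sigma$.

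Combining these yields $\|(BS)^*(I-G)f\|=\|SAPf\|_H\leq d_\Sigma\|(I-G)f\|_H$, so Lemma~\ref{lem:auxiliary-bound} gives $\|BSf\|\leq d_\Sigma\|(I-P)f\|$ for all $f\in D$ (using $Sf=S(I-P)f$ as noted there). I expect the main obstacle to be not the arithmetic but the domain verification $AP(D)\subset D(S^*)$: unlike the constant-coefficient setting of \cite{GS16}, $S$ here is a variable-coefficient operator whose essential self-adjointness is only established in Section~4 via the auxiliary Theorem~\ref{thm:ess-self-adjoint}, and the approximation argument above must genuinely use the integrability of $b$ provided by Proposition~\ref{prop:b-bound} (and hence the specific growth hypotheses \nref{ass:coeff-growth-one}{($\Sigma$3)} or \nref{ass:coeff-growth-two}{($\Sigma$3$'$)}) rather than being a purely formal manipulation.
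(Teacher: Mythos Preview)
Your proof is correct and follows the same overall strategy as the paper: invoke Lemma~\ref{lem:auxiliary-bound}, identify $S^*APf=-b\cdot\nabla_x f_S$, and estimate via Proposition~\ref{prop:b-bound} and Fubini. Two minor differences are worth noting: the paper checks $APf\in D(S^*)$ by a direct duality computation $(Sh,APf)_H=(h,-Tf)_H$ rather than your cutoff approximation showing $APf\in D(\bar S)$, and in the final step the paper bounds $\|\nabla_x f_S\|^2 = -(Pf,Gf)_H\le\|Pf\|\,\|Gf\|\le 2\|g\|^2$ via dissipativity of $G$, whereas your algebraic identity $-(Gf,f)_H\le\tfrac12\|(I-G)f\|^2$ is sharper and actually yields $c_1=\sqrt{d^3/2}\,N_\Sigma$ --- so your ``factor~$2$'' is really a factor~$\tfrac12$, which a~fortiori gives the stated constant $d_\Sigma$.
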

\begin{proof}
	For $f\in D$, define $Tf\in H$ by 
	\[
		Tf\defeq \sum_{i=1}^d b_i\partial_i(f_S) = \sum_{i,j=1}^d (\partial_j a_{ij} - a_{ij}v_j)\partial_{x_i}(P_S f).
	\]
	We want to apply Lemma~\ref{lem:auxiliary-bound} to the operator $(S,D(S))$.
	Let $f\in D$, $h\in D(S)$ and $h_n\in D$ such that $h_n\to h$ and $Sh_n\to Sh$ in $H$ as $n\to\infty$.
	Then, by integration by parts,
	\[
		(Sh,APf)_H= \lim_{n\to\infty}(Sh_n,-v\cdot\nabla_x(P_S f))_H = \lim_{n\to\infty} (h_n,-Tf)_H= (h,-Tf)_H.
	\]
	This shows $APf\in D(S^*)$ and by the first part of Lemma~\ref{lem:auxiliary-bound},
	$(I-G)f\in D((BS)^*)$ and $(BS)^*(I-G)f= S^*APf = -Tf$. Now set $g=(I-G)f$, then, via Proposition~\ref{prop:b-bound},
	\[
	\begin{aligned}
		\|(BS)^*g\|_H^2&=\|Tf\|_H^2=\int_E \left(\sum_{i=1}^d b_i  \partial_i f_S \right)^2\, \mathrm{d}\mu\\
		&\leq d^2 \sum_{i,j=1}^d\int_{\R^d} \int_{\R^d}(\partial_j a_{ij}(v)-a_{ij}(v)v_j)^2\ \mathrm{d}\nu(v)\ (\partial_{x_i}(P_Sf)(x))^2 \,\mathrm{e}^{-\Phi(x)}\ \mathrm{d}x\\
		&\leq d^3N_\Sigma^2 \sum_{i=1}^d\int_{\R^d} \partial_{x_i}(Pf)\cdot \partial_{x_i}(P_Sf)\,\mathrm{e}^{-\Phi(x)}\ \mathrm{d}x.
	\end{aligned}
	\]
	A final integration by parts then yields
	\[
	\begin{aligned}
		\|(BS)^*g\|_H^2
		&\leq -d^3N_\Sigma^2 \int_{\R^d}Pf\cdot
			(\Delta_x P_Sf - \nabla\Phi\nabla_x(P_Sf))\,\mathrm{e}^{-\Phi(x)}\ \mathrm{d}x\\
		&= -d^3N_\Sigma^2 \int_{\R^d}Pf\cdot
		Gf\,\mathrm{e}^{-\Phi(x)}\ \mathrm{d}x\\
		&\leq d^3N_\Sigma^2\, \|Pf\|_{L^2(\text{e}^{-\Phi(x)}\,\mathrm{d}x)}\cdot\|Gf\|_{L^2(\text{e}^{-\Phi(x)}\,\mathrm{d}x)}\\
		&\leq d^3N_\Sigma^2\, \|Pf\|_H(\|(I-G)f\|_H + \|f\|_H)\\
		&\leq 2d^3N_\Sigma^2\, \|g\|_H^2,
	\end{aligned}
	\]
	where the last inequality is due to dissipativity of $(G,D)$.
\end{proof}
\begin{proof}[of Theorem 1.1]
	Under the given assumptions, all conditions \nref{ass:hypoc-potential}{(C1)-(C3)}, \nref{ass:ellipticity}{($\Sigma$1)}, \nref{ass:coeff-derivatives}{($\Sigma$2)} and either \nref{ass:coeff-growth-one}{($\Sigma$3)} or \nref{ass:coeff-growth-two}{($\Sigma$3$'$)} are satisfied.
	Therefore hypocoercivity follows by the previous propositions and Theorem~\ref{thm:hypoc}.
	It remains to show the stated convergence rate, which will be done as in \cite{GS16} or \cite{Hypo2D}
	using the determined values for $c_1$, $c_2$, $\Lambda_M$ and $\Lambda_m$.
	Fix
	\[
		\delta\defeq \frac{\Lambda}{1+\Lambda}\frac1{1+c_\Phi+d_\Sigma}.
	\]
	Then the coefficients on the right hand side of \eqref{eq:conv-rate} can be written as
	$c_\Sigma-\eps r_{\Phi}(N_\Sigma)$ and $\eps s_\Phi$ respectively, where
	\[
	\begin{aligned}
		r_\Phi(N_\Sigma)&\defeq (1+c_\Phi+\sqrt{2d^3}N_\Sigma)\left(1+\frac{1+\Lambda}{2\Lambda}(1+c_\Phi+\sqrt{2d^3}N_\Sigma) \right) \quad\text{ and }\\
		s_\Phi &\defeq \frac12\frac\Lambda{1+\Lambda}.
	\end{aligned}
	\]
	and $\eps=\eps_\Phi(\Sigma)\in(0,1)$ still needs to be determined.
	Write $r_{\Phi}(N_\Sigma)+s_\Phi$ as the polynomial
	\[
		r_{\Phi}(N_\Sigma)+s_\Phi = a_1+a_2N_\Sigma+a_3N_\Sigma^2,
	\]
	where all $a_i\in(0,\infty)$, $i=1,\dots,3$ depend on $\Phi$.
	Then define
	\[
		\tilde{\eps}_\Phi(N_\Sigma)\defeq \frac{N_\Sigma}{r_\Phi(N_\Sigma)+s_\Phi}=\frac{N_\Sigma}{a_1+a_2N_\Sigma+a_3N_\Sigma^2}.
	\]
	Some rough estimates show $\tilde{\eps}_\Phi(N_\Sigma)\in(0,1)$.
	Now let $v>0$ be arbitrary and set
	\[
		\eps\defeq \frac{v}{1+v}\frac{c_\Sigma}{N_\Sigma}\tilde{\eps}_\Phi(N_\Sigma)\in(0,1).
	\]
	Then $\eps r_\Phi(N_\Sigma)+\eps s_\Phi=\frac{v}{1+v} c_\Sigma <c_\Sigma$,
	hence we get the estimate
	\[
		c_\Sigma-\eps r_\Phi(N_\Sigma) > \eps s_\Phi = \frac{v}{1+v}\frac{2c_\Sigma}{n_1+n_2N_\Sigma+n_3N_\Sigma^2}\eqdef\kappa,
	\]
	where all $n_i\in(0,\infty)$ depend on $\Phi$ and are given by
	\[
		n_i \defeq \frac2{s_\Phi}a_i,\qquad\text{ for each }i=1,\dots,3.
	\]
	Clearly, $\kappa$, $\eps$ and $\delta$ now solve \eqref{eq:conv-rate} and the convergence rate coefficients are given via
	Theorem~\ref{thm:hypoc} by
	\begin{align*}
		\kappa_1&=\sqrt{\frac{1+\eps}{1-\eps}}
		=\sqrt{\frac{1+v+\frac{c_\Sigma}{N_\Sigma}\tilde{\eps}_\Phi(N_\Sigma)v}
			{1+v-\frac{c_\Sigma}{N_\Sigma}\tilde{\eps}_\Phi(N_\Sigma)v}}
		\leq \sqrt{1+2v+v^2}=1+v \quad\text{ and }\\
		\kappa_2 &= \frac\kappa{1+\eps} > \frac12\kappa
	\end{align*}
	Hence, by choosing $\theta_1=1+v$ and $\theta_2= \frac12\kappa= \frac{\theta_1-1}{\theta_1}\frac{c_\Sigma}{n_1+n_2N_\Sigma+n_3N_\Sigma^2}$, the rate of convergence
	claimed in the theorem is shown.
\end{proof}
\begin{remark}\label{rem:magic}
	We remark here that all previous considerations up to the explicit rate of convergence can also be applied
	to the formal adjoint operator $(L^*,D)$ with $L^*=S+A$, the closure of which generates the adjoint semigroup $(T_t^*)_{t\geq 0}$ on $H$. For example, the perturbation procedure to prove essential m-dissipativity is exactly the same as for $L$, since the sign of $A$ does not matter due to antisymmetry. We can use this to construct solutions to the corresponding Fokker-Planck PDE associated with our Langevin dynamics, see Section~\ref{subsec:fokker-planck}.
\end{remark}

\section{Essential m-dissipativity of the Langevin operator}
The goal of this section is to prove Theorem~\ref{thm:ess-m-diss}.
We start by giving some basics on perturbation of semigroup generators.
\subsection{Basics on generators and perturbation}
\begin{definition}
	Let $(A,D(A))$ and $(B,D(B))$ be linear operators on $H$.
	Then $B$ is said to be \emph{$A$-bounded} if $D(A)\subset D(B)$
	and there exist constants $a,b<\infty$ such that
	\begin{equation}\label{eq:a-bound}
		\|Bf\|_H\leq a\|Af\|_H + b\|f\|_H
	\end{equation}
	holds for all $f\in D(A)$.
	The number $\inf\{ a\in\R\mid \text{ \eqref{eq:a-bound} holds for some }b<\infty \}$
	is called the \emph{$A$-bound} of $B$.
\end{definition}
\begin{theorem}
	Let $D\subset H$ be a dense linear subspace, $(A,D)$ be an essentially m-dissipative linear operator on $H$
	and let $(B,D)$ be dissipative and $A$-bounded with $A$-bound strictly less than $1$.
	Then $(A+B,D)$ is essentially m-dissipative and its closure is given by
	$(\overline{A}+\overline{B},D(\overline{A}))$.
\end{theorem}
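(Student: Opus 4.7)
The proof follows the classical Kato--Phillips perturbation strategy for dissipative operators, executed in three steps.

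First, I would extend $B$ to an operator $\overline{B}$ on $D(\overline{A})$. Given $f\in D(\overline{A})$, pick a sequence $f_n\in D$ with $f_n\to f$ and $Af_n\to\overline{A}f$. The $A$-bound $\|Bf_n-Bf_m\|\leq a\|A(f_n-f_m)\|+b\|f_n-f_m\|$ shows $(Bf_n)$ is Cauchy, so define $\overline{B}f$ as its limit; independence of the approximating sequence is routine. Passing to the limit in $\|Bf_n\|\leq a\|Af_n\|+b\|f_n\|$ yields $\|\overline{B}f\|\leq a\|\overline{A}f\|+b\|f\|$ with the same constant $a<1$, and dissipativity transfers by continuity.

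Second, I would establish that $(\overline{A}+\overline{B},D(\overline{A}))$ is m-dissipative. Dissipativity is immediate as a sum. For the range condition, the sharp dissipative estimate $\|(\lambda-\overline{A})f\|^2\geq\lambda^2\|f\|^2+\|\overline{A}f\|^2$ valid for every $\lambda>0$ gives simultaneously $\|(\lambda-\overline{A})^{-1}\|\leq\lambda^{-1}$ and $\|\overline{A}(\lambda-\overline{A})^{-1}\|\leq 1$. Hence $\|\overline{B}(\lambda-\overline{A})^{-1}\|\leq a+b\lambda^{-1}$, and this quantity can be made strictly less than $1$ by choosing $\lambda$ large enough, which is possible precisely because $a<1$. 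A Neumann series then provides a bounded inverse for $I-\overline{B}(\lambda-\overline{A})^{-1}$ on $H$, and the factorisation
\[
\lambda-(\overline{A}+\overline{B})=\bigl(I-\overline{B}(\lambda-\overline{A})^{-1}\bigr)(\lambda-\overline{A})
\]
shows its range is all of $H$. The Lumer--Phillips theorem then yields m-dissipativity.

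Third, I would verify that $D$ is a core for $\overline{A}+\overline{B}$, which gives the closure statement. Given $f\in D(\overline{A})$, pick $f_n\in D$ approximating $f$ in the $\overline{A}$-graph norm (using that $D$ is a core for $\overline{A}$). Step one gives $Bf_n\to\overline{B}f$, so $(A+B)f_n\to(\overline{A}+\overline{B})f$ and $f$ lies in the graph closure of $(A+B,D)$. Combined with the dissipativity of $(A+B,D)$ and the m-dissipativity from step two, this proves essential m-dissipativity of $(A+B,D)$ with closure $(\overline{A}+\overline{B},D(\overline{A}))$.

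The main obstacle is the second step, and the key subtlety is the use of the sharp dissipative estimate $\|\overline{A}(\lambda-\overline{A})^{-1}\|\leq 1$ rather than a cruder bound; this is precisely what enables the threshold $a<1$ instead of the more obvious $a<1/2$ one would get from $\|\overline{A}(\lambda-\overline{A})^{-1}\|\leq 2$. Everything else is standard bookkeeping.
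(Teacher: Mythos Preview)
The paper does not prove this theorem; it is stated in Section~4.1 as a standard background result on perturbation of generators, with no proof supplied. Your outline is the classical argument (extension of $B$ via $A$-boundedness, Neumann series for the range condition, core verification), and it is correct. In particular, the inequality $\|(\lambda-\overline{A})f\|^2\geq\lambda^2\|f\|^2+\|\overline{A}f\|^2$ follows directly from expanding the square and using $\mathrm{Re}(\overline{A}f,f)\leq 0$, so the bound $\|\overline{A}(\lambda-\overline{A})^{-1}\|\leq 1$ is legitimate and gives the threshold $a<1$ as you note.
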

A useful criterion for verifying $A$-boundedness is given by:
\begin{lemma}\label{lem:perturbation-criterion}
	Let $D\subset H$ be a dense linear subspace, $(A,D)$ be essentially m-dissipative
	and $(B,D)$ be dissipative. Assume that there exist constants $c,d<\infty$ such that
	\[
		\|Bf\|_H^2 \leq c(Af,f)_H + d\|f\|_H^2
	\]
	holds for all $f\in D$. Then $B$ is $A$-bounded with $A$-bound $0$.
\end{lemma}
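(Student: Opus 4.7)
The plan is to reduce the quadratic-in-$f$ hypothesis to a linear-in-$\|Af\|_H$ estimate via Cauchy–Schwarz, and then introduce a tunable weight via Young's inequality that pushes the coefficient of $\|Af\|_H$ arbitrarily close to zero. Since $(A,D)$ is dissipative we have $(Af,f)_H \leq 0$, so the right-hand side of the hypothesis is naturally read after replacing $c(Af,f)_H$ by $-c(Af,f)_H = c|(Af,f)_H|$ (a sign convention I assume throughout; the strict reading would trivialize the conclusion to boundedness of $B$). The inclusion $D(A) \supset D$ demanded in the definition of $A$-boundedness is automatic since both operators are declared on $D$, so only the quantitative bound \eqref{eq:a-bound} remains to be established.

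The first step is Cauchy–Schwarz, $|(Af,f)_H| \leq \|Af\|_H \|f\|_H$, which converts the hypothesis into
\[
\|Bf\|_H^2 \leq c\,\|Af\|_H\|f\|_H + d\,\|f\|_H^2.
\]
The second step is to fix an arbitrary weight $\eta > 0$ and apply the elementary estimate $xy \leq \tfrac{\eta}{2}x^2 + \tfrac{1}{2\eta}y^2$ with $x = \|Af\|_H$, $y = \|f\|_H$, followed by subadditivity of the square root $\sqrt{\alpha+\beta}\leq \sqrt{\alpha}+\sqrt{\beta}$. This yields an estimate of the form
\[
\|Bf\|_H \leq \sqrt{\tfrac{c\eta}{2}}\,\|Af\|_H + \sqrt{\tfrac{c}{2\eta}+d}\;\|f\|_H.
\]

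To conclude, given any prescribed $a > 0$ I would simply set $\eta := 2a^2/c$ so that $\sqrt{c\eta/2} = a$, which verifies \eqref{eq:a-bound} with this $a$ and with the finite constant $b := \sqrt{c^2/(4a^2)+d}$. Since $a$ was arbitrary, the infimum in the definition of the $A$-bound equals $0$, as claimed. I anticipate no substantive obstacle beyond the cosmetic sign reconciliation noted at the outset; once the correct sign of $(Af,f)_H$ is in place, the argument is a textbook Young-inequality manipulation and does not use dissipativity of $B$ nor essential m-dissipativity of $A$ beyond the domain inclusion.
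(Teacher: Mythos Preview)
Your proof is correct, and there is nothing to compare against: the paper states Lemma~\ref{lem:perturbation-criterion} as a basic fact in the preliminaries on perturbation theory and does not supply a proof. Your Cauchy--Schwarz plus weighted Young inequality argument is exactly the standard route for this kind of Kato-bound criterion.

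Your remark on the sign is well taken and worth making precise. As written, with $c\geq 0$ and $(Af,f)_H\leq 0$ by dissipativity, the hypothesis indeed collapses to $\|Bf\|_H^2\leq d\|f\|_H^2$, which already gives $A$-bound zero trivially. The nontrivial intended reading---and the one the paper actually uses in its applications, e.g.\ in the proof of Proposition~\ref{prop:no-potential} where the estimate appears with $(-S_1f,f)$---is the version with $-c(Af,f)_H$ (equivalently, $c$ of either sign). Your argument handles that case, so the lemma is established in the form in which it is invoked. Your closing observation that neither dissipativity of $B$ nor essential m-dissipativity of $A$ is used in the bound itself is also correct; those hypotheses are carried along only because the lemma feeds into the perturbation theorem immediately preceding it.
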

We also require the following generalization of the perturbation method:
\begin{lemma}\label{lem:perturbation-projection}
	Let $D\subset H$ be a dense linear subspace, $(A,D)$ be essentially m-dissipative
	and $(B,D)$ be dissipative on $H$. Assume that there exists a complete orthogonal family
	$(P_n)_{n\in\N}$, i.e.~each $P_n$ is an orthogonal projection, $P_nP_m=0$ for all $n\neq m$
	and $\sum_{n\in\N}P_n= I$ strongly, such that
	\[
		P_n(D)\subset D,\qquad P_nA=AP_n,\quad\text{ and }\quad P_nB=BP_n
	\]
	for all $n\in\N$. Set $A_n\defeq AP_n$, $B_n\defeq BP_n$, both with domain $D_n\defeq P_n(D)$,
	as operators on $P_n(H)$. Assume that each $B_n$ is $A_n$-bounded with $A_n$-bound strictly less than $1$.
	Then $(A+B,D)$ is essentially m-dissipative.
\end{lemma}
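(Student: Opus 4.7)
The plan is to patch together the single-component perturbation theorem (the one stated immediately before the lemma) across the orthogonal decomposition $H=\bigoplus_{n\in\N}P_n(H)$ induced by the projection family. The commutation assumptions guarantee that this decomposition is respected by $A$, $B$ and hence by $A+B$ on $D$, so the range condition from the Lumer--Phillips theorem can be checked piece by piece.

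First I would record dissipativity of $(A+B,D)$: since both $(A,D)$ and $(B,D)$ are dissipative, so is their sum, hence the operator is closable. For essential m-dissipativity it then suffices to show that the range of $I-(A+B)$ on $D$ is dense in $H$. Next, I would invoke the previous perturbation theorem on each reduced Hilbert space $P_n(H)$: the restrictions $(A_n,D_n)$ are essentially m-dissipative (because $(A,D)$ is essentially m-dissipative and $P_n$ commutes with $A$ while mapping $D$ into itself, so the range of $I-A_n$ on $D_n$ equals $P_n(\text{Range}(I-A)|_D)$, which is dense in $P_n(H)$), the $(B_n,D_n)$ are dissipative on $P_n(H)$, and by hypothesis $B_n$ is $A_n$-bounded with bound $<1$. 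Applying the theorem therefore gives essential m-dissipativity of $(A_n+B_n,D_n)$ on $P_n(H)$ for each $n\in\N$, i.e.\ $(I-A_n-B_n)(D_n)$ is dense in $P_n(H)$.

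To conclude, given $h\in H$ and $\varepsilon>0$, choose $N$ large enough that $\sum_{n>N}\|P_nh\|_H^2<\varepsilon^2/2$, and then for each $n\le N$ pick $f_n\in D_n$ with $\|(I-A_n-B_n)f_n-P_nh\|_H^2<\varepsilon^2/(2N)$. Set $f=\sum_{n=1}^N f_n\in D$ (a finite sum of elements of $D$). Using $P_nf_n=f_n$, $P_mf_n=0$ for $m\neq n$, and $AP_n=P_nA$, $BP_n=P_nB$, the vectors $(I-A-B)f_n-P_nh$ lie in mutually orthogonal subspaces $P_n(H)$, so that
\[
\|(I-A-B)f-h\|_H^2=\sum_{n=1}^N\|(I-A_n-B_n)f_n-P_nh\|_H^2+\sum_{n>N}\|P_nh\|_H^2<\varepsilon^2.
\]
Thus $(I-A-B)(D)$ is dense, which together with dissipativity yields essential m-dissipativity via Lumer--Phillips.

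The only delicate point is checking that the hypotheses really do transfer cleanly to each block: one needs $A_n$ and $B_n$ to have a common invariant core $D_n=P_n(D)$, the inherited essential m-dissipativity of $A_n$ on $P_n(H)$, and that the $A_n$-bound of $B_n$ remains strictly below $1$ (the latter is assumed outright). Given the commutation relations and $P_n(D)\subset D$, all of these follow, and the rest of the argument is a routine $\bigoplus$-orthogonality computation; so the main substantive obstacle is simply being careful with the core/domain bookkeeping when restricting to $P_n(H)$.
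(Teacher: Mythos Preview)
The paper does not actually supply a proof of this lemma: it is stated in Section~4.1 among the preliminary perturbation facts and used later without justification. So there is nothing in the paper to compare against directly.

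That said, your proposed argument is sound and is the natural way to prove the statement. The key ingredients---dissipativity of the sum, transferring essential m-dissipativity of $(A,D)$ to each $(A_n,D_n)$ via $P_n(I-A)(D)=(I-A_n)(D_n)$ and continuity of $P_n$, applying the previous perturbation theorem blockwise, and then reassembling by orthogonality---are all correct. The bookkeeping points you flag (density of $D_n$ in $P_n(H)$, dissipativity of $A_n$ and $B_n$ inherited from $A$ and $B$ because $P_n(D)\subset D$) all go through for the reasons you give. Your proof would serve as a complete justification of the lemma.
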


\subsection{The symmetric part}
We first prove essential self-adjointness, equivalently essential m-dissipativity, for a certain class of symmetric differential operators on specific Hilbert spaces. This is essentially a combination of two results by Bogachev, Krylov, and Röckner,
namely \cite[Corollary 2.10]{BKR01} and \cite[Theorem 7]{BKR97}, however, the combined statement does not seem to be well known and might hold interest as the basis for similar m-dissipativity proofs.
We use the slightly more general statement from \cite[Theorem 5.1]{BGS13} in order to relax the assumptions.
\begin{theorem}\label{thm:ess-self-adjoint}
	Let $d\geq 2$ and consider $H=L^2(\R^d, \mu)$ where $\mu=\rho\,\mathrm{d}x$, $\rho=\varphi^2$ for some
	$\varphi\in H_{\mathrm{loc}}^{1,2}(\R^d)$ such that $\frac1\rho\in L_{\mathrm{loc}}^\infty(\R^d)$.
	Let $A=(a_{ij})_{1\leq i,j\leq d}:\R^d\to \R^{d\times d}$
	be symmetric and locally strictly elliptic with $a_{ij}\in L^\infty(\R^d)$ for all $1\leq i,j\leq d$.
	Assume there is some $p>d$ such that $a_{ij}\in H_{\mathrm{loc}}^{1,p}(\R^d)$ for all $1\leq i,j\leq d$
	and that $|\nabla\rho|\in L_{\mathrm{loc}}^p(\R^d)$. Consider the bilinear form $(B,D)$ given by $D=C_c^\infty(\R^d)$
	and
	\[
		B(f,g)\defeq (\nabla f,A\nabla g)_H= \int_{\R^d} (\nabla f(x),A(x)\nabla g(x))_{\mathrm{euc}}\,\rho(x)\,\mathrm{d}x,
		\qquad f,g\in D.
	\]
	Define further the linear operator $(S,D)$ via
	\[
		Sf\defeq \sum_{i,j=1}^d a_{ij}\partial_j\partial_i f + \sum_{i=1}^d b_i\partial_i f,\qquad f\in D,
	\]
	where $b_i=\sum_{j=1}^d (\partial_j a_{ij}+a_{ij}\frac{\partial_j\rho}{\rho})\in L_{\mathrm{loc}}^p(\R^d)$, so that
	$B(f,g)=(-Sf,g)_H$.
	Then $(S,D)$ is essentially self-adjoint on $H$.
\end{theorem}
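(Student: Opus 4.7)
The plan is to prove essential self-adjointness by showing that $\ker(\lambda I - S^*) = \{0\}$ for some $\lambda > 0$, which is equivalent to density of $\mathrm{Range}(\lambda I - S)$ in $H$. Symmetry of $(S, D)$ and $(Sf,f)_H = -B(f,f) \leq 0$ by local strict ellipticity of $A$ are immediate from the given identification with the Dirichlet form, so $(S, D)$ is symmetric, dissipative and hence closable; the reduction to the range condition for some positive $\lambda$ is then standard.

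Fix $u \in D(S^*)$ with $(\lambda I - S^*)u = 0$ and test against $f \in D$. Using $\rho \cdot Sf = \mathrm{div}(\rho A \nabla f)$, the identity $(u, (\lambda - S)f)_H = 0$ unfolds to
\[
	\int_{\R^d} u\bigl(\lambda f \rho - \mathrm{div}(\rho A \nabla f)\bigr)\,\mathrm{d}x = 0 \quad \text{for all } f \in C_c^\infty(\R^d).
\]
Expanding and using $b_i = \sum_j(\partial_j a_{ij} + a_{ij}\partial_j \rho/\rho)$, this is precisely the statement that $u \in L^2(\mu)$ is a distributional solution on $\R^d$ of the (non-divergence form) elliptic equation $\sum_{i,j} a_{ij}\partial_i\partial_j u + \sum_i b_i \partial_i u = \lambda u$, whose drift coefficients lie in $L_{\mathrm{loc}}^p$ by hypothesis.

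The main obstacle is to upgrade $u$ from an $L^2(\mu)$-object to $H_{\mathrm{loc}}^{1,p}(\R^d)$. This is exactly what the elliptic regularity results of Bogachev, Krylov and Röckner provide: the assumptions $a_{ij}\in H_{\mathrm{loc}}^{1,p}$ with $p>d$, local uniform ellipticity of $A$, and $|\nabla\rho|\in L_{\mathrm{loc}}^p$ are tailored so that \cite[Corollary 2.10]{BKR01}, or more generally \cite[Theorem 5.1]{BGS13}, applies and yields local $H^{1,p}$-regularity of any such distributional solution. Once $u \in H_{\mathrm{loc}}^{1,p}$ is established, the bilinear form $B(u, \psi)$ is well defined for compactly supported $\psi \in H^{1,2}(\R^d)$, and the weak equation $B(u,\psi) = -\lambda(u,\psi)_H$ extends from $\psi \in D$ to such $\psi$ by approximation in the $H^{1,p}$ topology.

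With this regularity in hand, the concluding argument follows the cutoff technique of \cite[Theorem 7]{BKR97}. With $\varphi_n$ as in Definition~\ref{def:cutoff}, the test function $\psi = u\varphi_n^2 \in H^{1,p}$ has compact support, and inserting it into the extended weak equation produces
\[
	\lambda \int_{\R^d} u^2 \varphi_n^2 \rho\,\mathrm{d}x + \int_{\R^d} \varphi_n^2 (A\nabla u, \nabla u)_{\mathrm{euc}}\,\rho\,\mathrm{d}x = -2 \int_{\R^d} u \varphi_n (A\nabla u, \nabla \varphi_n)_{\mathrm{euc}}\,\rho\,\mathrm{d}x.
\]
Young's inequality absorbs the gradient term on the left into half the elliptic term, and boundedness of the $a_{ij}$ together with $|\nabla\varphi_n| \leq C/n$ bounds the remaining right-hand side by $(C'/n^2)\|u\|_H^2$. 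Letting $n \to \infty$ yields $\lambda \|u\|_H^2 \leq 0$, so $u = 0$, completing the proof.
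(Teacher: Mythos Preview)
Your proof follows the same strategy as the paper's: reduce to showing the range of $(\lambda-S)$ is dense, invoke the Bogachev--Krylov--R\"ockner regularity machinery to upgrade the orthogonal element to $H_{\mathrm{loc}}^{1,p}$, then run a cutoff argument. The concluding cutoff computation with $\psi = u\varphi_n^2$ is correct and is a minor repackaging of the paper's product-rule version.

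There is, however, one imprecision in the regularity step that you should be aware of. The identity you derive is $\int_{\R^d} (u\rho)\,(\lambda f - Sf)\,\mathrm{d}x = 0$, which is \emph{not} the statement that $u$ is a distributional solution of $\sum a_{ij}\partial_i\partial_j u + \sum b_i\partial_i u = \lambda u$ in the usual sense on $\R^d$. Rather, it says the signed measure $\nu \defeq u\rho\,\mathrm{d}x$ satisfies $\int (Sf - \lambda f)\,\mathrm{d}\nu = 0$ for all $f\in C_c^\infty$, and it is precisely to this formulation that \cite[Theorem~5.1]{BGS13} applies, yielding $u\rho \in H_{\mathrm{loc}}^{1,p}(\R^d)$ (and local H\"older continuity). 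One then needs the extra short step---using $\frac{1}{\rho}\in L_{\mathrm{loc}}^\infty$ and $|\nabla\rho|\in L_{\mathrm{loc}}^p$---to deduce $u = (u\rho)\cdot\frac{1}{\rho}\in H_{\mathrm{loc}}^{1,p}$ and locally bounded. The paper makes this deduction explicit; your sketch skips it and attributes the conclusion for $u$ directly to BGS13, which is not quite what that theorem says. Once this is filled in, the remainder of your argument goes through as written.
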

\begin{proof}
	Analogously to the proof of \cite[Theorem 7]{BKR97}, it can be shown that $\rho$ is continuous, hence locally bounded.
	Assume that there is some $g\in H$ such that
	\begin{equation}\label{eq:range-orthogonal}
		\int_{\R^d} (S-I)f(x)\cdot g(x)\cdot \rho(x)\,\mathrm{d}x = 0\quad\text{ for all }f\in D.
	\end{equation}
	Define the locally finite signed Borel measure $\nu$ via $\nu=g\rho\,\mathrm{d}x$, which is then absolutely continuous
	with respect to the Lebesgue measure. By definition it holds that
	\[
		\int_{\R^d} \left( \sum_{i,j=1}^d a_{ij}\partial_j\partial_if + \sum_{i=1}^d b_i\partial_i f - f \right)\,\mathrm{d}\nu =0
		\quad\text{ for all }f\in D,
	\]
	so by \cite[Theorem 5.1]{BGS13}, the density $g\cdot\rho$ of $\nu$ is in $H_{\mathrm{loc}}^{1,p}(\R^d)$ and locally Hölder continuous, hence locally bounded. This implies $g=g\rho\cdot\frac1\rho\in L_{\mathrm{loc}}^p(\R^d)\cap L_{\mathrm{loc}}^\infty(\R^d)$ and 
	$\nabla g=\nabla(g\rho)\cdot\frac1\rho-(g\rho)\frac{\nabla \rho}{\rho^2}\in L_{\mathrm{loc}}^p(\R^d)$.
	Hence $g\in H_{\mathrm{loc}}^{1,p}(\R^d)$, is locally bounded, and $g\cdot b_i\in L_{\mathrm{loc}}^p(\R^d)$ for all $1\leq i\leq d$. Therefore, we can apply integration by parts to \eqref{eq:range-orthogonal} and get for every $f\in D$:
	\begin{equation}\label{eq:gradient-form-zero}
	\begin{aligned}
		0 &= -\sum_{i,j=1}^d (a_{ij}\partial_if,\partial_jg)_H -\sum_{i=1}^d (\partial_if, b_ig)_H + \sum_{i=1}^d (\partial_if, b_ig)_H - (f,g)_H\\
		 &= -\int_{\R^d} (\nabla f,A\nabla g)_\mathrm{euc}\,\mathrm{d}\mu-(f,g)_H.
	\end{aligned}
	\end{equation}
	Note that this equation can then be extended to all $f\in H^{1,2}(\R^d)$ with compact support, since $p>2$ by definition.
	Now let $\psi\in C_c^\infty(\R^d)$ and set $\eta=\psi g\in H^{1,2}(\R^d)$, which has compact support.
	The same then holds for $f\defeq \psi \eta\in H^{1,2}(\R^d)$. Elementary application of the product rule yields
	\begin{equation}\label{eq:product-rule}
		(\nabla \eta, A\nabla (\psi g))_\mathrm{euc} = (\nabla f,A\nabla g)_\mathrm{euc} - \eta(\nabla\psi,A\nabla g)_\mathrm{euc}
		+ g(\nabla\eta,A\nabla\psi)_\mathrm{euc}.
	\end{equation}
	From now on, for $a,b:\R^d\to\R^d$, let $(a,b)$ always denote the evaluation of the Euclidean inner product $(a,b)_\mathrm{euc}$. By using \eqref{eq:product-rule} and applying \eqref{eq:gradient-form-zero} to $f$, we get
	\[
	\begin{aligned}
		\int_{\R^d} &(\nabla(\psi g),A\nabla(\psi g))\,\mathrm{d}\mu + \int_{\R^d} (\psi g)^2\,\mathrm{d}\mu
		= \int_{\R^d} (\nabla\eta,A\nabla(\psi g))\,\mathrm{d}\mu + \int_{\R^d} \eta\psi g\,\mathrm{d}\mu\\
		&= \int_{\R^d} (\nabla f,A\nabla g)\,\mathrm{d}\mu
			- \int_{\R^d} \eta(\nabla\psi,A\nabla g)\,\mathrm{d}\mu
			+ \int_{\R^d} g(\nabla\eta,A\nabla\psi)\,\mathrm{d}\mu
			+ \int_{\R^d} f g\,\mathrm{d}\mu\\
		&= - \int_{\R^d} \psi g(\nabla\psi,A\nabla g)\,\mathrm{d}\mu
			+ \int_{\R^d} g(\nabla(\psi g),A\nabla\psi)\,\mathrm{d}\mu\\
		&= \int_{\R^d} g^2(\nabla\psi,A\nabla\psi)\,\mathrm{d}\mu,
	\end{aligned}
	\]
	where the last step follows from the product rule and symmetry of $A$.
	Since $A$ is locally strictly elliptic, there is some $c>0$ such that
	\[
		0\leq \int_{\R^d} c(\nabla(\psi g),\nabla(\psi g)) \,\mathrm{d}\mu
			\leq \int_{\R^d} (\nabla(\psi g),A\nabla(\psi g))\,\mathrm{d}\mu
	\]
	and therefore it follows that
	\begin{equation}\label{eq:final-stretch}
		\int_{\R^d} (\psi g)^2\,\mathrm{d}\mu \leq \int_{\R^d} g^2(\nabla\psi,A\nabla\psi)\,\mathrm{d}\mu.
	\end{equation}
	Let $(\psi_n)_{n\in\N}$ be as in Definition~\ref{def:cutoff}. Then \eqref{eq:final-stretch} holds for all $\psi=\psi_n$.
	By dominated convergence, the left part converges to $\|g\|_H^2$ as $n\to\infty$. The integrand of the right hand side term
	is dominated by $d^2C^2M \cdot g^2\in L^1(\mu)$, where $C$ is from Def.~\ref{def:cutoff} and $M\defeq\max_{1\leq i,j\leq d}\|a_{ij}\|_\infty$.
	By definition of the $\psi_n$, that integrand converges pointwisely to zero as $n\to\infty$, so again by dominated convergence it follows that $g=0$ in $H$.
	
	This implies that $(S-I)(D)$ is dense in $H$ and therefore that $(S,D)$ is essentially self-adjoint.
\end{proof}
\begin{remark}
	The above theorem also holds for $d=1$, as long as $p\geq 2$. Indeed, continuity of $\rho$ follows from similar regularity estimates, see \cite[Remark 2]{BKR97}. The proof of \cite[Theorem 5.1]{BGS13} mirrors the proof of \cite[Theorem 2.8]{BKR01}, where $d\geq 2$ is used to apply \cite[Theorem 2.7]{BKR01}. However, in the cases where it is applied, this distinction is not necessary (since $p'<q$ always holds). Finally, the extension of \eqref{eq:gradient-form-zero} requires $p\geq 2$.
\end{remark}
We use this result to prove essential m-dissipativity of the symmetric part $(S,D)$ of our operator $L$:
\begin{theorem}
Let $H, D$ and the operator $S$ be defined as in Section~\ref{subsec:data}. Then $(S,D)$ is essentially m-dissipative on $H$.
Its closure $(S,D(S))$ generates a sub-Markovian strongly continuous contraction semigroup on $H$.
\end{theorem}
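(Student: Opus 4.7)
The natural route is to reduce essential m-dissipativity of $(S,D)$ on $H$ to essential self-adjointness of its purely velocity-dependent counterpart, apply Theorem~\ref{thm:ess-self-adjoint}, and then lift to the full space via a Fubini plus separability argument.

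First, I introduce the $v$-operator $S_v\psi = \sum_{i,j} a_{ij}\partial_{v_j}\partial_{v_i}\psi + \sum_i b_i \partial_{v_i}\psi$ on $C_c^\infty(\R^d) \subset L^2(\R^d,\nu)$ and verify the hypotheses of Theorem~\ref{thm:ess-self-adjoint} for it. The Gaussian density $\rho(v) = (2\pi)^{-d/2}\mathrm{e}^{-|v|^2/2}$ is smooth and strictly positive, so $\rho = \varphi^2$ with $\varphi \in C^\infty(\R^d) \subset H_{\mathrm{loc}}^{1,2}(\R^d)$ and $1/\rho \in L_{\mathrm{loc}}^\infty(\R^d)$. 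A direct computation gives $\partial_j\rho/\rho = -v_j$, so $b_i = \sum_j(\partial_j a_{ij} + a_{ij}\partial_j\rho/\rho)$ matches exactly the drift prescribed by the theorem. Combined with \nref{ass:ellipticity}{($\Sigma$1)}, \nref{ass:coeff-derivatives}{($\Sigma$2)} and the identification $H_{\mathrm{loc}}^{1,p}(\R^d,\nu) = H_{\mathrm{loc}}^{1,p}(\R^d)$ (valid because $\rho$ is locally bounded both above and away from zero), this verifies the remaining assumptions. Theorem~\ref{thm:ess-self-adjoint} (together with its Remark, covering $d=1$ when $p\geq 2$) then yields essential self-adjointness, and in particular essential m-dissipativity, of $(S_v, C_c^\infty(\R^d))$ on $L^2(\R^d,\nu)$.

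Next, I lift this to $H$. Dissipativity of $(S,D)$ is already recorded in Definition~\ref{def:operators}, so by Lumer--Phillips it suffices to show $(I - S)(D)$ is dense in $H$. Assume $g \in H$ is orthogonal to this range. For any $\phi, \psi \in C_c^\infty(\R^d)$, the tensor $f(x,v) = \phi(x)\psi(v)$ lies in $D$ with $(I - S)f = \phi \cdot (I - S_v)\psi$, and Fubini gives
\[
\int_{\R^d} \phi(x)\, \mathrm{e}^{-\Phi(x)} \bigl( (I - S_v)\psi,\, g(x,\cdot) \bigr)_{L^2(\nu)}\, \mathrm{d}x = 0.
\]
Varying $\phi$ over the dense set $C_c^\infty(\R^d) \subset L^2(\R^d, \mathrm{e}^{-\Phi}\mathrm{d}x)$ shows that for each fixed $\psi$ the inner product vanishes for a.e.~$x$; choosing a countable family $\{\psi_k\} \subset C_c^\infty(\R^d)$ that is dense in $L^2(\R^d,\nu)$ and intersecting the resulting null sets yields a single full-measure set on which $g(x,\cdot) \perp (I - S_v)(C_c^\infty(\R^d))$. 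Since this range is dense in $L^2(\R^d,\nu)$ by the previous paragraph, $g(x,\cdot) = 0$ for a.e.~$x$ and hence $g = 0$, establishing essential m-dissipativity.

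For the sub-Markov property, the symmetric gradient-type form $\mathcal{E}(f,g) = \int_E (\nabla_v f, \Sigma \nabla_v g)_{\mathrm{euc}}\, \mathrm{d}\mu$ on $D$ satisfies $\mathcal{E}(f,g) = (-Sf, g)_H$; being closable with closure a symmetric Dirichlet form (the unit contraction property is the standard pointwise check for gradient forms, using only symmetry and local strict ellipticity of $\Sigma$), its associated $C_0$-semigroup, which coincides by uniqueness with the one generated by $(\overline{S}, D(\overline{S}))$, is sub-Markovian. The main obstacle is entirely absorbed into Theorem~\ref{thm:ess-self-adjoint} (elliptic regularity combined with the Bogachev--Krylov--Röckner cut-off argument); granted that result, everything above reduces to routine bookkeeping — matching the drift to the form required by that theorem, swapping weighted and unweighted local Sobolev spaces, and performing the Fubini/separability lift.
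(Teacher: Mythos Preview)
Your proof is correct and follows essentially the same strategy as the paper: reduce to the velocity-only operator $S_v$ on $L^2(\R^d,\nu)$, verify the hypotheses of Theorem~\ref{thm:ess-self-adjoint} for the Gaussian density, and then lift to the product space. The paper carries out the lift constructively (for a pure tensor $g_1\otimes g_2$ it takes $\tilde f_n\in C_c^\infty(\R^d)$ with $(I-\tilde S)\tilde f_n\to g_2$ and sets $f_n=g_1\otimes\tilde f_n$), whereas you argue dually via Fubini and an orthogonal complement; both are routine. For the sub-Markov property the paper checks $(Sf,f^+)_H\le 0$ on $D$ together with $1\in D(S)$, $S1=0$; your Dirichlet-form route is an equally standard alternative.

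One small slip in your separability step: you should choose $\{\psi_k\}\subset C_c^\infty(\R^d)$ so that $\{(I-S_v)\psi_k\}$ is dense in $L^2(\R^d,\nu)$ (possible because $(I-S_v)(C_c^\infty)$ is dense and $L^2(\nu)$ is separable), not merely $\{\psi_k\}$ dense in $L^2(\nu)$. Density of $\{\psi_k\}$ alone does not let you pass from $g(x,\cdot)\perp(I-S_v)\psi_k$ for all $k$ to $g(x,\cdot)\perp(I-S_v)(C_c^\infty(\R^d))$, since $I-S_v$ is unbounded on $L^2(\nu)$. With this adjustment the conclusion $g(x,\cdot)=0$ follows directly from $g(x,\cdot)\perp\{(I-S_v)\psi_k:k\in\N\}$, and the intermediate claim about orthogonality to the full range is unnecessary.
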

\begin{proof}
	Define the operator $(\tilde{S},C_c^\infty(\R^d))$ on $L^2(\R^d,\nu)$ by
	\[
		\tilde{S}f\defeq \sum_{i,j=1}^d a_{ij}\partial_j\partial_if + \sum_{i=1}^d b_i\partial_if,
		\quad f\in C_c^\infty(\R^d).
	\]
	The density $\rho$ of $\nu$ wrt.~the Lebesgue measure is given by $\rho(v)=\mathrm{e}^{-v^2/2}=(\mathrm{e}^{-v^2/4})^2$.
	Due to the conditions \nref{ass:ellipticity}{($\Sigma$1)}, \nref{ass:coeff-derivatives}{($\Sigma$2)} and either \nref{ass:coeff-growth-one}{($\Sigma$3)} or \nref{ass:coeff-growth-two}{($\Sigma$3$'$)}, all assumptions from Theorem~\ref{thm:ess-self-adjoint} are fulfilled and therefore,
	$(\tilde{S},C_c^\infty(\R^d))$ is essentially m-dissipative in $L^2(\nu)$. Let $g=g_1\otimes g_2\in C_c^\infty(\R^d)\otimes C_c^\infty(\R^d)$ be a pure tensor. Then there is a sequence $(\tilde{f}_n)_{n\in\N}$ in $C_c^\infty(\R^d)$ such that $(I-\tilde{S})\tilde{f}_n\to g_2$ in $L^2(\nu)$ as $n\to\infty$. Define $f_n \in D$ for each $n\in\N$ by
	\[
		f_n(x,v) \defeq g_1(x)\tilde{f}_n(v).
	\]
	Then
	\[
		\|(I-S)f_n-g\|_H = \| g_1\otimes ((I-\tilde{S})\tilde{f}_n - g_2)\|_H= \|g_1\|_{L^2(\mathrm{e}^{-\Phi(x)}\,\mathrm{d}x)}\cdot \|(I-\tilde{S})\tilde{f}_n-g_2 \|_{L^2(\nu)},
	\]
	which converges to zero as $n\to\infty$. By taking linear combinations, this shows that $(I-S)(D)$ is dense in $C_c^\infty(\R^d)\otimes C_c^\infty(\R^d)$ wrt.~the $H$-norm. Since $C_c^\infty(\R^d)\otimes C_c^\infty(\R^d)$ is dense in $H$, $(S,D)$ is essentially m-dissipative and its closure $(S,D(S))$ generates a strongly continuous contraction semigroup.
	
	It can easily be shown that $(Sf,f^+)_H\leq 0$ for all $f\in D$.
	Parallelly to the proof of (D7), it holds that $1\in D(S)$ and $S1=0$. This together implies that $(S,D(S))$ is a Dirichlet operator
	and the generated semigroup is sub-Markovian.
\end{proof}

\subsection{Perturbation of the symmetric part for nice coefficients}
Now we extend the essential m-dissipativity stepwise to the non-symmetric operator $L$ by perturbation.
This follows and is mostly based on the method seen in the proof of \cite[Theorem 6.3.1]{Conrad2011}, which proved that result for $\Sigma=I$.

Since $S$ is dissipative on $D_1\defeq L_0^2(\mathrm{e}^{-\Phi}\,\mathrm{d}x)\otimes C_c^\infty(\R^d)\supset D$,
the operator $(S,D_1)$ is essentially m-dissipative as well.
The unitary transformation $T:L^2(\R^d,\mathrm{d}(x,v))\to H$ given by $Tf(x,v)=\mathrm{e}^{\frac{v^2}4+\frac{\Phi(x)}2}f(x,v)$ leaves $D_1$ invariant.
This implies that the operator $(S_1,D_1)$ on $L^2(\R^d,\mathrm{d}(x,v))$ , where $S_1=T^{-1}ST$, is again essentially m-dissipative. Note that $S_1$ is explicitly given by
\[
	S_1 f = \sum_{i,j=1}^d a_{ij}\partial_{v_j}\partial_{v_i}f- \frac14(v,\Sigma v)f+ \frac12 \operatorname{tr}(\Sigma)f+ \sum_{i,j=1}^d \partial_j a_{ij}(\frac{v_i}2f+\partial_{v_i}f)
\]
Now consider the operator $(ivxI, D_1)$, which is dissipative as $\operatorname{Re}(ivxf,f)_{L^2(\R^d,\mathrm{d}(x,v))}=0$ for $f\in D_1$.
We show the following perturbation result:
\begin{proposition}\label{prop:no-potential}
	Let $\Sigma$ satisfy \nref{ass:coeff-growth-one}{($\Sigma$3)} with $\beta\leq -1$. Then the operator
	$(S_1+ivxI,D_1)$ is essentially m-dissipative on $L^2(\R^d,\mathrm{d}(x,v))$.
\end{proposition}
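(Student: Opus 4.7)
The approach is to invoke the projection-perturbation lemma (Lemma~\ref{lem:perturbation-projection}), using orthogonal projections in the position variable $x$ to reduce the problem to a Kato-type bound (Lemma~\ref{lem:perturbation-criterion}) on slices where $x$ is bounded. The structural fact that makes this work is that $S_1$ contains only $v$-derivatives and $v$-dependent multiplications, while $ivxI$ is a pure scalar multiplication, so any $x$-multiplication operator commutes with both. Concretely, set $A_n \defeq \{x\in\R^d : n-1 \leq |x| < n\}$ and $P_n \defeq \mathbf{1}_{A_n}(x)$. Then $(P_n)_{n\in\N}$ is a complete mutually orthogonal family of projections on $L^2(\R^{2d},\mathrm{d}(x,v))$ satisfying $P_n S_1 = S_1 P_n$, $P_n(ivxI) = (ivxI)P_n$, and $P_n(D_1) \subset D_1$ (multiplication by a bounded indicator preserves the $L_0^2$-component). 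Moreover, $S_1$ restricts on $P_n(L^2)=L^2(A_n,\mathrm{d}x)\otimes L^2(\R^d,\mathrm{d}v)$ to $I\otimes s_1$, and essential m-dissipativity of $s_1$ on $L^2(\R^d,\mathrm{d}v)$ follows by the same tensor-product argument used just above for $S$ on $H$.

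\smallskip

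\noindent The key analytic step is a moment estimate in the velocity variable. Integration by parts in $v$ shows that the two contributions involving $\partial_j a_{ij}\,\partial_{v_i} f$ cancel, leaving
\[
(-S_1 f, f)_{L^2} = \int (\nabla_v f,\Sigma \nabla_v f)\,\mathrm{d}(x,v) + \tfrac{1}{4}\int (v,\Sigma v) f^2\,\mathrm{d}(x,v) - \tfrac{1}{2}\int \tr(\Sigma) f^2\,\mathrm{d}(x,v) - \tfrac{1}{2}\sum_{i,j}\int (\partial_j a_{ij})\, v_i\, f^2\,\mathrm{d}(x,v).
\]
Ellipticity gives $(v,\Sigma v) \geq c_\Sigma|v|^2$, $\tr(\Sigma) \leq dM_\Sigma$ is uniformly bounded, and --- crucially for the condition $\beta \leq -1$ --- the pointwise bound $|(\partial_j a_{ij})(v)\, v_i| \leq M|v|(1+|v|)^\beta \leq M$ holds under \nref{ass:coeff-growth-one}{($\Sigma$3)}. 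Combining these, one obtains constants $C_1,C_2 < \infty$ depending only on $c_\Sigma, d, M, M_\Sigma$ such that
\[
\int |v|^2 f^2\,\mathrm{d}(x,v) \leq C_1(-S_1 f, f)_{L^2} + C_2 \|f\|_{L^2}^2
\qquad \text{for all } f \in D_1.
\]

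\smallskip

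\noindent The remainder is routine. For $f \in P_n(D_1)$ one has $|x| \leq n$ on $\supp f$, hence $\|ivxf\|_{L^2}^2 \leq n^2 \int |v|^2 f^2 \leq n^2 C_1 (-S_1 P_n f, f) + n^2 C_2 \|f\|^2$. Since $ivxI$ is trivially dissipative ($\mathrm{Re}(ivxf,f)=0$), Lemma~\ref{lem:perturbation-criterion} applied on $P_n(L^2)$ yields $(S_1 P_n)$-bound zero for $ivxI\,P_n$, and Lemma~\ref{lem:perturbation-projection} then delivers essential m-dissipativity of $(S_1+ivxI, D_1)$ on $L^2(\R^{2d},\mathrm{d}(x,v))$. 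The main obstacle is the moment estimate itself: the hypothesis $\beta \leq -1$ is used precisely to make $\sum_{i,j}(\partial_j a_{ij})v_i$ pointwise bounded, so that it can be absorbed into $C_2\|f\|^2$. For $\beta > -1$ the same term grows like $|v|^{1+\beta}$ and would need to be absorbed into the quadratic $(v,\Sigma v)/4$ term via Young's inequality, which is what forces the refined perturbation steps for the remaining growth regimes later in the section.
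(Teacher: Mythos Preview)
Your proof is correct and follows essentially the same approach as the paper: the same annular projections $P_n=\mathds{1}_{[n-1,n)}(|x|)$, the same integration-by-parts identity for $(-S_1f,f)_{L^2}$ (the paper writes the chain of inequalities directly rather than stating the identity first), the same use of $\beta\leq -1$ to make $(\partial_j a_{ij})v_i$ uniformly bounded, and the same conclusion via Lemma~\ref{lem:perturbation-criterion} and Lemma~\ref{lem:perturbation-projection}. Your added remark on why $S_1P_n$ is essentially m-dissipative on $P_n(L^2)$ is a point the paper leaves implicit.
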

\begin{proof}
Define the orthogonal projections $P_n$ via $P_nf(x,v)\defeq \xi_n(x)f(x,v)$,
where $\xi_n$ is given by $\xi_n = \mathds{1}_{[n-1,n)}(|x|)$,
which leave $D_1$ invariant. Then the conditions for Lemma~\ref{lem:perturbation-projection} are fulfilled,
and we are left to show the $A_n$-bounds.
Note that due to the restriction on $\beta$, there is some constant $C<\infty$ such that
$\partial_j a_{ij}(v)v_i\leq C$ for all $1\leq i,j\leq d$, $v\in\R^d$.
For each fixed $n\in\N$ it holds for all $f\in P_nD_1$:
\[
\begin{aligned}
	\|ivxf\|_{L^2}^2 &\leq n^2 \int_{\R^{2d}} |v|^2 f^2\,\mathrm{d}(x,v)
		\leq 4c_\Sigma^{-1} n^2 \int_{\R^{2d}}  \frac{(v,\Sigma v)}4 f^2\,\mathrm{d}(x,v)\\
		&\leq 4c_\Sigma^{-1} n^2 \int_{\R^{2d}} \frac{(v,\Sigma v)}4 f^2+(\nabla_v f,\Sigma \nabla_v f)  \,\mathrm{d}(x,v)\\
		&= 4c_\Sigma^{-1} n^2 \int_{\R^{2d}}\left(-\sum_{i,j=1}^d a_{ij}\partial_{v_j}\partial_{v_i}f-\sum_{i,j=1}^d\partial_j a_{ij}\partial_{v_i}f+\frac{(v,\Sigma v)}4 f  \right) f\,\mathrm{d}(x,v)\\
		&= 4c_\Sigma^{-1} n^2\left( (-P_nS_1f,f)
			+ \int_{\R^{2d}} \frac12 \operatorname{tr}(\Sigma)f^2
			+ \sum_{i,j=1}^d \partial_j a_{ij}\frac{v_i}2f^2 \,\mathrm{d}(x,v)\right)\\
		&\leq 4c_\Sigma^{-1} n^2\left( (-S_1f,f) + (d^2C+\frac{dM_\Sigma}2)\|f\|_{L^2}^2\right).
\end{aligned}
\]
Hence by Lemma~\ref{lem:perturbation-criterion}, $(ivxI P_n,P_n D_1)$ is $S_1 P_n$-bounded with Kato-bound zero.
Application of Lemma~\ref{lem:perturbation-projection} yields the statement.
\end{proof}
Since $C_c^\infty(\R^d)\otimes C_c^\infty(\R^d)$ is dense in $D_1$ wrt.~the graph norm of $S_1+ivxI$,
we obtain essential m-dissipativity of $(S_1+ivxI,C_c^\infty(\R^d)\otimes C_c^\infty(\R^d))$ and therefore also of its dissipative extension $(S_1+ivxI,D_2)$ with $D_2\defeq \mathcal{S}(\R^d)\otimes C_c^\infty(\R^d))$,
where $\mathcal{S}(\R^d)$ denotes the set of smooth functions of rapid decrease on $\R^d$.
Applying Fourier transform in the $x$-component leaves $D_2$ invariant and shows that
$(L_2,D_2)$ is essentially m-dissipative, where $L_2= S_1+v\nabla_x$.
Now we add the part depending on the potential $\Phi$.

\begin{proposition}
	Let $\Sigma$ satisfy \nref{ass:coeff-growth-one}{($\Sigma$3)} with $\beta\leq -1$ and $\Phi$ be Lipschitz-continuous.
	Then the operator $(L',D_2)$ with $L'=L_2-\nabla\Phi\nabla_v$ is essentially m-dissipative on $L^2(\R^d,\mathrm{d}(x,v))$.
\end{proposition}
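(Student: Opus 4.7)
The plan is to apply the Kato-type perturbation criterion of Lemma~\ref{lem:perturbation-criterion} to the essentially m-dissipative operator $(L_2,D_2)$ and the perturbation $B\defeq -\nabla\Phi\cdot\nabla_v$ on $D_2=\mathcal{S}(\R^d)\otimes C_c^\infty(\R^d)$. Since $\Phi$ is Lipschitz-continuous, $\nabla\Phi$ exists $\mathrm{d}x$-almost everywhere and is essentially bounded by some $K<\infty$, so $Bf\in L^2(\R^{2d},\mathrm{d}(x,v))$ for every $f\in D_2$. As $\nabla\Phi$ depends only on $x$, integration by parts in $v$ gives $(Bf,f)_{L^2}=\tfrac{1}{2}\int(\nabla_v\cdot\nabla\Phi)f^2\,\mathrm{d}(x,v)=0$, so $B$ is antisymmetric and hence dissipative on $D_2$.

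For the quadratic bound I would first combine the pointwise estimate $|\nabla\Phi\cdot\nabla_v f|^2\leq K^2|\nabla_v f|^2$ with uniform ellipticity \nref{ass:ellipticity}{($\Sigma$1)} to obtain
\[
	\|Bf\|_{L^2}^2 \leq K^2 c_\Sigma^{-1}\int_{\R^{2d}}(\nabla_v f,\Sigma\nabla_v f)\,\mathrm{d}(x,v).
\]
The central step is then to recycle the energy identity already performed inside the proof of Proposition~\ref{prop:no-potential}: pairing $-S_1 f$ with $f$ and integrating by parts in $v$ yields, after cancellation of the $\sum_{i,j}\partial_j a_{ij}\cdot \partial_{v_i}f \cdot f$ cross-terms,
\[
	(-S_1 f, f)_{L^2} = \int_{\R^{2d}} (\nabla_v f,\Sigma\nabla_v f) + \tfrac{1}{4}(v,\Sigma v)f^2 - \tfrac{1}{2}\tr(\Sigma)f^2 - \tfrac{1}{2}\sum_{i,j=1}^d \partial_j a_{ij}\, v_i\, f^2 \,\mathrm{d}(x,v).
\]
Dropping the nonnegative term $\tfrac{1}{4}(v,\Sigma v)f^2$, using $\tr(\Sigma)\leq dM_\Sigma$ and the bound $|\partial_j a_{ij}(v)v_i|\leq C$ (valid under \nref{ass:coeff-growth-one}{($\Sigma$3)} with $\beta\leq -1$), and invoking antisymmetry of $v\cdot\nabla_x$ on $D_2$ to replace $(-S_1 f, f)$ by $(-L_2 f, f)$, the combined estimate reads
\[
	\|Bf\|_{L^2}^2 \leq c_1\,(-L_2 f, f)_{L^2} + c_2\,\|f\|_{L^2}^2,\qquad f\in D_2,
\]
with finite constants $c_1, c_2$ depending only on $K$, $c_\Sigma$, $M_\Sigma$, $C$ and $d$.

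By Lemma~\ref{lem:perturbation-criterion}, $B$ is then $L_2$-bounded with $L_2$-bound zero on $D_2$, and the perturbation theorem for essentially m-dissipative operators gives essential m-dissipativity of $(L_2+B,D_2)=(L',D_2)$ on $L^2(\R^{2d},\mathrm{d}(x,v))$. I do not anticipate a serious obstacle: the required energy identity is essentially the one already carried out in Proposition~\ref{prop:no-potential}, with Lipschitz continuity of $\Phi$ replacing the crude bound $|vx|\leq n|v|$ on the range of the projection $P_n$ used there. The only point that really needs attention is that the regime $\beta\leq -1$ in \nref{ass:coeff-growth-one}{($\Sigma$3)} is exactly what supplies the uniform-in-$v$ bound on $\partial_j a_{ij}(v)v_i$ needed to absorb the linear-in-$v$ integrand into a multiple of $\|f\|^2$.
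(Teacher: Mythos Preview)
Your proposal is correct and follows essentially the same approach as the paper's proof: both bound $\|\nabla\Phi\cdot\nabla_v f\|_{L^2}^2$ via Lipschitz continuity of $\Phi$ and uniform ellipticity, then control $\int(\nabla_v f,\Sigma\nabla_v f)$ through the energy identity for $(-S_1 f,f)$ (using antisymmetry of $v\cdot\nabla_x$ to pass to $(-L_2 f,f)$ and the $\beta\leq -1$ assumption to handle the $\partial_j a_{ij}\,v_i$ term), concluding via Lemma~\ref{lem:perturbation-criterion}. The paper's version is more terse and keeps the nonnegative $\tfrac14(v,\Sigma v)f^2$ term in the estimate rather than dropping it, but the argument is the same.
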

\begin{proof}
	It holds due to antisymmetry of $v\nabla_x$ that
	\[
	\begin{aligned}
		\|\nabla\Phi\nabla_vf\|_{L^2}^2 &\leq \||\nabla\Phi|\|_\infty^2c_\Sigma^{-1}\left((\nabla_vf,\Sigma\nabla_v f)_{L^2}+ \left(\frac{(v,\Sigma v)}4 f,f\right)_{L^2}-(v\nabla_x f,f)_{L^2} \right)\\
		&\leq \||\nabla\Phi|\|_\infty^2c_\Sigma^{-1} \left( (-L_2f,f)_{L^2} + (d^2C+\frac{dM_\Sigma}2)\|f\|_{L^2}^2\right),
	\end{aligned}
	\]
	analogously to the proof of Proposition~\ref{prop:no-potential}, which again implies
	that the antisymmetric, hence dissipative operator $(\nabla\Phi\nabla_v,D_2)$ is $L_2$-bounded with bound zero.
	This shows the claim.
\end{proof}
Denote by $H_c^{1,\infty}(\R^d)$ the space of functions in $H^{1,\infty}(\R^d)$ with compact support
and set $D'\defeq H_c^{1,\infty}(\R^d)\otimes C_c^\infty(\R^d)$. As $(L',D')$ is dissipative and its closure extends $(L',D_2)$, it is itself essentially m-dissipative. The unitary transformation $T$ from the beginning of this section leaves $D'$ invariant, and it holds that $TL'T^{-1} = L$ on $D'$. This brings us to the first m-dissipativity result for the complete Langevin operator:
\begin{theorem}\label{thm:generator-nice-coeff}
	Let $\Sigma$ satisfy \nref{ass:coeff-growth-one}{($\Sigma$3)} with $\beta\leq -1$ and $\Phi$ be Lipschitz-continuous.
	Then $(L,D)$ with is essentially m-dissipative on $H$.
\end{theorem}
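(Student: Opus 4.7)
The strategy is to leverage all the preceding work: the paragraph just before the theorem establishes essential m-dissipativity of $(L', D')$ on $L^2(\R^{2d}, \mathrm{d}(x,v))$, where $D' = H_c^{1,\infty}(\R^d)\otimes C_c^\infty(\R^d)$. The unitary $T: L^2(\R^{2d},\mathrm{d}(x,v)) \to H$ preserves $D'$ (since multiplication by $\mathrm{e}^{v^2/4+\Phi(x)/2}$ maps compactly supported Lipschitz functions in $x$ to compactly supported Lipschitz functions in $x$, and leaves the smooth $v$-factor untouched) and intertwines $L'$ and $L$. Therefore, as a direct consequence, $(L, D')$ is essentially m-dissipative on $H$. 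The remaining task is purely a core-reduction: show that $D = C_c^\infty(\R^{2d})$ is also a core for the closure.

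Concretely, I would prove that $(I - L)(D)$ is dense in $H$. Since $(I - L)(D')$ is already dense, it suffices to show that every $f \in D'$ can be approximated in the graph norm of $L$ by elements $f_n \in D$. As $D'$ is the algebraic tensor product, linearity reduces this to pure tensors $f = g \otimes h$ with $g \in H_c^{1,\infty}(\R^d)$ and $h \in C_c^\infty(\R^d)$. Fix a standard mollifier $\rho_\eps \in C_c^\infty(\R^d)$ and set $g_n \defeq g * \rho_{1/n}$; for $n$ large, $g_n \in C_c^\infty(\R^d)$ so that $f_n \defeq g_n \otimes h \in C_c^\infty(\R^d)\otimes C_c^\infty(\R^d) \subset D$. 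Standard mollifier theory yields $g_n \to g$ and $\partial_i g_n \to \partial_i g$ in $L^p(\R^d,\mathrm{d}x)$ for every $1\leq p<\infty$, and since the $g_n$ are supported in a fixed compact set and $\mathrm{e}^{-\Phi}$ is locally bounded, the same holds in $L^p(\mathrm{e}^{-\Phi(x)}\mathrm{d}x)$.

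It then remains to verify that $Lf_n \to Lf$ in $H$. Writing out
\[
L(g\otimes h) = g\cdot\tr(\Sigma H_v h) + g\cdot b(v)\cdot\nabla_v h - g\cdot\nabla\Phi(x)\cdot\nabla_v h + (v\cdot\nabla_x g)\cdot h,
\]
the first three terms are multipliers of $g$ by a $v$-dependent function that lies in $L^2(\nu)$ (using $h\in C_c^\infty$, $a_{ij}\in L^\infty$, $b_i\in L^2(\nu)$ from Proposition~\ref{prop:b-bound}, and $\nabla\Phi\in L^\infty$ by Lipschitz continuity), so they converge as $g_n\to g$ in $L^2(\mathrm{e}^{-\Phi(x)}\mathrm{d}x)$. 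The fourth term involves $v\cdot\nabla_x g$; here $|v|\cdot|h|$ is in $L^2(\nu)$ because $h$ is compactly supported, so the convergence $\nabla g_n \to \nabla g$ in $L^2(\mathrm{e}^{-\Phi(x)}\mathrm{d}x)$ combined with Fubini gives the desired $H$-convergence of this term. Hence $(I-L)f_n\to (I-L)f$ in $H$.

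The main obstacle is really only bookkeeping: one must carefully justify that mollification of $g\in H_c^{1,\infty}$, which does \emph{not} converge in $W^{1,\infty}$, still delivers $L^2(\mathrm{e}^{-\Phi(x)}\mathrm{d}x)$-convergence of both $g_n$ and $\nabla g_n$, and then that the tensor-product structure makes the coefficients of $L$ harmless. The Lipschitz assumption on $\Phi$ is used exactly here to control the $\nabla\Phi\cdot\nabla_v h$ term uniformly in the position variable. Everything else is a direct transfer from the already-established essential m-dissipativity of $(L', D')$ under the unitary $T$.
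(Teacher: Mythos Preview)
Your proposal is correct and follows essentially the same route as the paper: transfer essential m-dissipativity from $(L',D')$ to $(L,D')$ via the unitary $T$, then approximate pure tensors $g\otimes h\in D'$ in the $L$-graph norm by elements of $C_c^\infty(\R^d)\otimes C_c^\infty(\R^d)\subset D$ using $H^{1,2}$-approximants of $g$. The paper is more terse, simply invoking an $H^{1,2}$-approximating sequence and boundedness of $\mathrm{e}^{-\Phi}$ and $v_j\mathrm{e}^{-v^2/2}$, whereas you spell out the mollifier construction and the term-by-term convergence; but the substance is identical.
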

\begin{proof}
	By the previous considerations, $(L,D')$ is essentially m-dissipative on $H$.
	Let $f\in D'$ with $f=g\otimes h$. It holds $g\in H_c^{1,\infty}(\R^d)\subset H^{1,2}(\R^d)$.
	Choose a sequence $(g_n)_{n\in\N}$ with $g_n\in C_c^\infty(\R^d)$, such that $g_n\to g$ in $H^{1,2}(\R^d)$
	as $n\to\infty$. Due to boundedness of $\mathrm{e}^{-\Phi}$ and $v_j\mathrm{e}^{-v^2/2}$ for all $1\leq j\leq d$, it follows immediately that $g_n\otimes h \to f$ and $L(g_n\otimes h)\to Lf$ in $H$ as $n\to\infty$. This extends to arbitrary $f\in D'$ via linear combinations and therefore shows that $C_c^\infty(\R^d)\otimes C_c^\infty(\R^d)$ and hence also $D$, is a core for $(L,D(L))$.
\end{proof}

\subsection{Proof of \texorpdfstring{Theorem~\ref{thm:ess-m-diss}}{Theorem 3.4}}
It is now left to relax the assumptions on $\Sigma$ and $\Phi$ by approximation.
Let the assumptions of Theorem~\ref{thm:ess-m-diss} hold and wlog $\Phi\geq 0$.
For $n\in\N$ we define $\Sigma_n$ via
\[
	\Sigma_n=(a_{ij,n})_{1\leq i,j\leq d}, \quad	a_{ij,n}(v)\defeq a_{ij}\left(\left(\frac{n}{|v|}\wedge 1\right)v\right).
\]
Then each $\Sigma_n$ also satisfies \nref{ass:ellipticity}{($\Sigma$1)}-\nref{ass:coeff-growth-one}{($\Sigma$3)} with $\beta= -1$, since
$\partial_k a_{ij,n}=\partial_k a_{ij}$ on $B_n(0)$ and $|\partial_k a_{ij,n}|\leq \frac{(1+\sqrt{d})nL_{\Sigma,n}}{|v|}$ outside of $\overline{B_n(0)}$, where $L_{\Sigma,n}$ denotes the supremum of $\max_{1\leq k\leq d}|\partial_k a_{ij}|$ on $\overline{B_n(0)}$.
Let further $\eta_m\in C_c^\infty(\R^d)$ for each $m\in\N$ with $\eta=1$ on $B_m(0)$ and set $\Phi_m=\eta_m \Phi$, which is Lipschitz-continuous. Define $H_m$ as $L^2(\R^{2d},\mathrm{e}^{-\frac{v^2}2-\Phi_m(x)}\,\mathrm{d}(x,v))$ and $(L_{n,m},D)$ via
\[
	L_{n,m}f = \sum_{i,j=1}^d a_{ij,n}\partial_{v_j}\partial_{v_i}f + \sum_{i=1}^d \sum_{j=1}^d(\partial_j a_{ij,n}(v)-a_{ij,n}(v)v_j)\partial_{v_i}f + v\cdot\nabla_x f - \nabla\Phi_m\cdot\nabla_v f. 
\]
Then Theorem~\ref{thm:generator-nice-coeff} shows that for each $n,m\in\N$, $(L_{n,m},D)$ is essentially m-dissipative on $H_m$,
and it holds that $L_{n,m}f = Lf$ for all $f\in D$ on $B_m(0)\times B_n(0)$.
Note further that $\|\cdot\|_H\leq \|\cdot\|_{H_m}$.

We need the following estimates:
\begin{lemma}\label{lem:technical-estimate}
	Let $n,m\in\N$ and $\Sigma_n$, $\Phi_m$ as defined above. Then there is a constant $D_1<\infty$
	independent of $n,m$ such that for each $1\leq j\leq d$, the following hold for all $f\in D$:
	\begin{align*}
		\|v_j f\|_{H_m} &\leq D_1 n^{\frac{1+\beta}2} \|(I-L_{n,m})f  \|_{H_m},\\
		\|\partial_{v_j}f\|_{H_m} &\leq D_1 n^{\frac{1+\beta}2}  \|(I-L_{n,m})f  \|_{H_m}.
	\end{align*}
\end{lemma}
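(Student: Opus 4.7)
The plan is to derive both estimates from the Dirichlet form identity satisfied by the symmetric part of $L_{n,m}$, combined with a Gaussian integration by parts that trades weights in $v_j$ for weights on $\partial_{v_j}$.

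First I would fix $f\in D$ and exploit the fact that the antisymmetric transport $v\cdot\nabla_x-\nabla\Phi_m\cdot\nabla_v$ contributes nothing when paired with $f$ in $H_m$. Integration by parts on the symmetric part then yields
\[
    (-L_{n,m}f,f)_{H_m} \;=\; \int_{\R^{2d}} (\nabla_v f,\Sigma_n\nabla_v f)_{\mathrm{euc}}\,\mathrm{d}\mu_m \;\geq\; c_\Sigma\,\|\nabla_v f\|_{H_m}^2,
\]
where the lower bound invokes uniform ellipticity \nref{ass:ellipticity}{($\Sigma$1)}, whose constant $c_\Sigma$ is inherited by every $\Sigma_n$ from the construction $\Sigma_n(v)=\Sigma((n/|v|\wedge 1)v)$. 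Combining this with the dissipative bound $\|f\|_{H_m}\leq \|(I-L_{n,m})f\|_{H_m}$ from Theorem~\ref{thm:generator-nice-coeff} and Cauchy-Schwarz gives
\[
    c_\Sigma\|\partial_{v_j}f\|_{H_m}^2 \;\leq\; \|L_{n,m}f\|_{H_m}\|f\|_{H_m} \;\leq\; 2\|(I-L_{n,m})f\|_{H_m}^2
\]
for every $1\leq j\leq d$, with constant independent of $n$ and $m$.

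For the $\|v_j f\|_{H_m}$ estimate I would integrate by parts in $v_j$ against the Gaussian density, using $\partial_{v_j}\mathrm{e}^{-v^2/2}=-v_j\mathrm{e}^{-v^2/2}$ and the compact support of $f\in D$, to obtain the identity
\[
    \|v_j f\|_{H_m}^2 \;=\; \|f\|_{H_m}^2 + 2(v_j f,\partial_{v_j}f)_{H_m}.
\]
Young's inequality on the cross term yields $\|v_j f\|_{H_m}^2\leq 2\|f\|_{H_m}^2+4\|\partial_{v_j}f\|_{H_m}^2$; inserting the previous derivative bound and again using dissipativity gives $\|v_j f\|_{H_m}\leq C\|(I-L_{n,m})f\|_{H_m}$ with $C$ depending only on $c_\Sigma$ and $d$.

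The main subtlety is to ensure that all constants are uniform in $n$ and $m$. Only $c_\Sigma$, $d$ and the $n,m$-independent Gaussian density enter the above chain; the ellipticity of $\Sigma_n$ does not degrade by the remark on $\Sigma_n$, and the dissipativity of $L_{n,m}$ is preserved uniformly by Theorem~\ref{thm:generator-nice-coeff}. In the regime relevant for Theorem~\ref{thm:ess-m-diss}, namely $\beta\geq -1$, one has $n^{(1+\beta)/2}\geq 1$, so the stated inequalities follow by absorbing $C$ into $D_1$. These uniform estimates are exactly what is required for the subsequent limit $n,m\to\infty$, where $n^{(1+\beta)/2}$ will be balanced against the polynomial growth condition $\gamma<2/(1+\beta)$ on $\nabla\Phi$.
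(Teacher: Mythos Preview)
Your argument is correct and in fact yields a stronger bound than the one stated: you obtain $\|v_jf\|_{H_m}$ and $\|\partial_{v_j}f\|_{H_m}$ controlled by $\|(I-L_{n,m})f\|_{H_m}$ with a constant depending only on $c_\Sigma$, independent of $n$. Since the lemma is only ever applied in the regime $\beta>-1$ (the case $\beta\leq -1$ being disposed of separately), the factor $n^{(1+\beta)/2}\geq 1$ can indeed be inserted a posteriori.

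The paper proceeds quite differently. It conjugates by the unitary $T_m f=\mathrm{e}^{\frac{v^2}4+\frac{\Phi_m}2}f$ to pass to $L^2(\mathrm{d}(x,v))$, where the transformed operator $L_{n,m}'$ contains the explicit potential term $-\tfrac14(v,\Sigma_n v)$; the bound on $\|v_jf\|$ then comes from $|v_j|^2\leq c_\Sigma^{-1}(v,\Sigma_n v)$. The price is that the remaining lower-order pieces of $L_{n,m}'$ involve $\partial_j a_{ij,n}\,v_i$, which grows like $n^{1+\beta}$ and is the source of the $n^{(1+\beta)/2}$ factor in the final estimate. Your route avoids this transformation entirely: you stay in $H_m$, use the Dirichlet form identity $(-L_{n,m}f,f)_{H_m}=\int(\nabla_vf,\Sigma_n\nabla_vf)\,\mathrm{d}\mu_m$ directly (the drift terms cancel exactly against the Gaussian weight, leaving no residual $n$-dependent pieces), and then recover $\|v_jf\|$ from $\|\partial_{v_j}f\|$ via the Gaussian integration by parts $\|v_jf\|_{H_m}^2=\|f\|_{H_m}^2+2(v_jf,\partial_{v_j}f)_{H_m}$. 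This is both shorter and sharper; the paper's approach has the advantage of reusing the $L^2(\mathrm{d}(x,v))$ machinery already set up for the perturbation argument, but your self-contained computation is preferable here.
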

\begin{proof}
	Recall the unitary transformations $T_m:L^2(\R^{2d},\mathrm{d}(x,v))\to H_m$ defined by $T_m f=\mathrm{e}^{\frac{v^2}4+\frac{\Phi_m(x)}2} f$,
	as well as the operator $L_{n,m}'=T_m^{-1}L_{n,m}T_m$,
	and let $f\in T_m^{-1}D$. Then
	\[
	\begin{aligned}
		L_{n,m}'f=\sum_{i,j=1}^d a_{ij,n}\partial_{v_j}\partial_{v_i}f&- \frac14(v,\Sigma_n v)f+ \frac12 \operatorname{tr}(\Sigma_n)f+ \sum_{i,j=1}^d \partial_j a_{ij,n}(\frac{v_i}2f+\partial_{v_i}f)\\
		 &-v\nabla_xf+\nabla\Phi_m\nabla_vf.
	\end{aligned}
	\]
	Analogously to the proof of Proposition~\ref{prop:no-potential}
	and due to antisymmetry of $v\nabla_x$ and $\nabla\Phi_m\nabla_v$ on $L^2(\mathrm{d}(x,v))$, it holds that
	\[
	\begin{aligned}
		\| v_j T_mf\|_{H_m}^2 &= \|v_j f\|_{L^2(\mathrm{d}(x,v))}^2 \leq 4 c_{\Sigma}^{-1}\int_{\R^{2d}}\frac14 (v,\Sigma_n v)f^2\,\mathrm{d}(x,v)\\
			&\leq 4c_\Sigma^{-1} \left( (-L_{n,m}'f,f)_{L^2(\mathrm{d}(x,v))} + \int_{\R^{2d}} \frac{f^2}2\left( \operatorname{tr}(\Sigma_n)+\sum_{i,j=1}^d\partial_ja_{ij,n}v_i\right)\mathrm{d}(x,v) \right).
	\end{aligned}
	\]
	Since $|\operatorname{tr}(\Sigma_n)|\leq |\operatorname{tr}(\Sigma)|\leq d\cdot M_\Sigma$ and
	\[
		|\partial_j a_{ij,n}(v)v_i| \leq |\partial_j a_{ij}(v)|\cdot |v_i| \leq \max\{ B_\Sigma, M\cdot n^{\beta+1}\}
		\quad\text{ for all }v\in B_n(0),
	\]
	as well as
	\[
		|\partial_j a_{ij,n}(v)v_i| \leq (1+\sqrt{d})n\frac{|v_i|}{|v|} \max_{1\leq k\leq d}\sup_{y\in B_n(0)}|\partial_k a_{ij}(y)| \leq 2\sqrt{d}Mn^{\beta+1}\quad\text{ for all } v\notin B_n(0),	
	\]
	and wlog $B_\Sigma\leq M\cdot n^{\beta+1}$, it follows that
	\[
		\| v_j T_mf\|_{H_m}^2 \leq 4c_\Sigma^{-1} (-L_{n,m}'f,f)_{L^2(\mathrm{d}(x,v))} + 2c_\Sigma^{-1}(dM_\Sigma + 2d^{5/2}Mn^{\beta + 1})\|f\|_{L^2(\mathrm{d}(x,v))}^2.
	\]
	Further, it clearly holds that
	\begin{align*}
		(-L_{n,m}'f,f)_{L^2(\mathrm{d}(x,v))} &\leq \frac14 \left(\|L_{n,m}'f \|_{L^2(\mathrm{d}(x,v))}+\|f\|_{L^2(\mathrm{d}(x,v))}\right)^2 \quad\text{ and }\\
		\|f\|_{L^2(\mathrm{d}(x,v))}^2&\leq \left(\|L_{n,m}'f\|_{L^2(\mathrm{d}(x,v))}+\|f\|_{L^2(\mathrm{d}(x,v))}\right)^2.
	\end{align*}
	Dissipativity of $(L_{n,m}',T_m^{-1}D)$ on $L^2(\mathrm{d}(x,v))$ implies
	\[
		\|L_{n,m}'f\|_{L^2(\mathrm{d}(x,v))}+\|f\|_{L^2(\mathrm{d}(x,v))}\leq \|(I-L_{n,m}')f\|_{L^2(\mathrm{d}(x,v))}+2\|(I-L_{n,m}')f\|_{L^2(\mathrm{d}(x,v))}.
	\]
	Overall, we get
	\[
	\begin{aligned}
		\| v_j T_mf\|_{H_m}^2 &\leq 2c_\Sigma^{-1}(1+2(dM_\Sigma + 2d^{5/2}Mn^{\beta + 1})) \|(I-L_{n,m}')f\|_{L^2(\mathrm{d}(x,v))}^2\\
		 &\leq 18c_\Sigma^{-1} d^3n^{\beta + 1}\max\{ M_\Sigma, M\}  \|(I-L_{n,m}')f\|_{L^2(\mathrm{d}(x,v))}^2.
	\end{aligned}
	\]
	Since
	\[
		\|(I-L_{n,m}')f\|_{L^2(\mathrm{d}(x,v))}^2 = \|T_m^{-1}(I-L_{n,m})T_mf\|_{L^2(\mathrm{d}(x,v))}^2 = \|(I-L_{n,m})T_mf\|_{H_m}^2,
	\]
	this proves the first statement with $D_1= \sqrt{18c_\Sigma^{-1}d^3\max\{ M_\Sigma,M \}}$.
	
	For the second part, note that $\partial_{v_j}T_mf= T_m\partial_{v_j}f+\frac{v_j}2T_mf$ and that
	\[
	\begin{aligned}
		\| T_m\partial_{v_j}f\|_{H_m}^2 &= (\partial_{v_j}f,\partial_{v_j}f)_{L^2(\mathrm{d}(x,v))}^2
		\leq c_\Sigma^{-1}\int_{\R^{2d}} (\nabla_v f,\Sigma_n\nabla_v f)_\mathrm{euc}\,\mathrm{d}(x,v)\\
		&\leq c_\Sigma^{-1} \left( (-L_{n,m}'f,f)_{L^2} + \int_{\R^{2d}} \frac12 \operatorname{tr}(\Sigma_n)f^2+\sum_{i,j=1}^d\partial_ja_{ij,n}\frac{v_i}2f^2\,\mathrm{d}(x,v) \right).
	\end{aligned}
	\]
	Repeating all calculations of the first part yields
	\[
		\|\partial_{v_j}T_mf\|_{H_m}\leq \left(\frac{D_1}2+\frac{D_1}2\right)n^{1+\beta}\|(I-L_{n,m})T_mf\|_{H_m}.
	\]
\end{proof}

Fix some pure tensor $g\in C_c^\infty(\R^d)\otimes C_c^\infty(\R^d)$. We prove that for every $\eps>0$, we can find some $f\in D$ such that $\|(I-L)f-g\|_H<\eps$. This then extends to arbitrary $g\in C_c^\infty(\R^d)\otimes C_c^\infty(\R^d)$ via linear combinations and therefore implies essential m-dissipativity of $(L,D)$ on $H$, since $C_c^\infty(\R^d)\otimes C_c^\infty(\R^d)$ is dense in $H$. If $\beta\leq -1$, then the proof is easier and follows analogously to the proof of of \cite[Theorem 6.3.1]{Conrad2011}. Therefore we will assume $\beta>-1$. Recall that in this case, we have $|\nabla\Phi(x)|\leq N(1+|x|^\gamma)$ for all $x\in\R^d$, where $\gamma<\frac{2}{1+\beta}$, see the assumptions of Theorem~\ref{thm:ess-m-diss}.

Denote the support of $g$ by $K_x\times K_v$, where $K_x$ and $K_v$ are compact sets in $\R^d$.
By a standard construction, for each $\delta_x,\delta_v >0$,
there are smooth cutoff functions $0\leq \phi_{\delta_x},\psi_{\delta_v}\leq 1\in C_c^\infty(\R^d)$
with $\supp(\phi_{\delta_x})\subset B_{\delta_x}(K_x)$,
$\supp(\psi_{\delta_v})\subset B_{\delta_v}(K_v)$,
$\phi_{\delta_x}=1$ on $K_x$, $\psi_{\delta_v}=1$ on $K_v$.
Moreover, there are constants $C_\phi, C_\psi$ independent of $\delta_x$ and $\delta_v$ such that
\[
	\|\partial^s\phi_{\delta_x}\|_\infty \leq C_\phi \delta_x^{-|s|}
	\quad\text{ and }\quad
	\|\partial^s\psi_{\delta_v}\|_\infty \leq C_\psi \delta_v^{-|s|}
\]
for all multi-indices $s\in\N^d$. Fix $\alpha$ such that $\frac{1+\beta}2 < \alpha < \frac1\gamma$.
For any $\delta>0$, we set $\delta_x\defeq \delta^\alpha$ and $\delta_v\defeq\delta$, and then define
$\chi_\delta(x,v)\defeq \phi_{\delta_x}(x)\psi_{\delta_v}(v)=\phi_{\delta^\alpha}(x)\psi_{\delta}(v)$.

For $f\in D$, $\delta>0$, consider $f_\delta\defeq \chi_\delta f$, which is an element of $D$, as $\chi_\delta\in D$.
Without loss of generality, we consider $\delta$ and hence $\delta^\alpha$
sufficiently large such that $\supp(\phi_{\delta^\alpha})\subset B_{2\delta^\alpha}(0)$,
$\supp(\psi_{\delta})\subset B_{2\delta}(0)$ and that there are $n,m\in \N$ that satisfy
\begin{equation}\label{eq:container_balls}
\supp(\phi_{\delta^\alpha})\times \supp(\psi_{\delta}) \subset B_m(0)\times B_n(0)\subset B_{2\delta^\alpha}(0)\times B_{2\delta}(0). 
\end{equation}
The following then holds:
\begin{lemma}\label{lem:even-more-technical}
	Let $g\in C_c^\infty(\R^d)\otimes C_c^\infty(\R^d)$ and $\phi,\psi$ as above.
	Then there is a constant $D_2<\infty$ and a function $\rho:\R\to\R$ satisfying $\rho(s)\to 0$ as $s\to\infty$,
	such that for any $\delta$, $n$ and $m$ satisfying \eqref{eq:container_balls},
	\[
		\|(I-L)f_\delta-g\|_H \leq \|(I-L_{n,m})f -g\|_{H_m} + D_2\cdot\rho(\delta) \|(I-L_{n,m})f\|_{H_m}
	\]
	holds for all $f\in D$.
\end{lemma}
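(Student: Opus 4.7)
The plan is to use the product rule identity
\[
(I-L)(\chi_\delta f) = \chi_\delta\,(I-L)f - [L,\chi_\delta]\,f
\]
and exploit two facts: first, by construction of $n,m$ in \eqref{eq:container_balls}, the cutoff $\chi_\delta$ is supported in $B_m(0)\times B_n(0)$, where $\eta_m\equiv 1$ (so $\Phi=\Phi_m$, hence $\|\cdot\|_H$ and $\|\cdot\|_{H_m}$ agree on functions supported there) and the coefficients of $L$ and $L_{n,m}$ coincide; second, $\chi_\delta\equiv 1$ on $K_x\times K_v\supset\supp g$, so $g=\chi_\delta g$. Combining these yields
\[
(I-L)f_\delta - g = \chi_\delta\bigl((I-L_{n,m})f-g\bigr) - [L,\chi_\delta]f,
\]
whose first summand is bounded in $H$ by $\|(I-L_{n,m})f-g\|_{H_m}$ using $0\le\chi_\delta\le 1$ and the inequality $\|\cdot\|_H\le\|\cdot\|_{H_m}$ from $\Phi\ge\Phi_m$. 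The task is then to control $\|[L,\chi_\delta]f\|_H$ by $\rho(\delta)\,\|(I-L_{n,m})f\|_{H_m}$ with $\rho(\delta)\to 0$.

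Expanding $[L,\chi_\delta]f$ through the product rule produces, after cancelations, terms of one of the following five types, each supported in $B_m(0)\times B_n(0)$: (a) $a_{ij}(\partial_{v_j}\chi_\delta)\,\partial_{v_i}f$; (b) $a_{ij}(\partial_{v_j}\partial_{v_i}\chi_\delta)\,f$; (c) $b_i\,(\partial_{v_i}\chi_\delta)\,f$; (d) $v_i\,(\partial_{x_i}\chi_\delta)\,f$; (e) $(\partial_{x_i}\Phi)(\partial_{v_i}\chi_\delta)\,f$. Using the cutoff bounds $\|\partial^s_v\psi_\delta\|_\infty\lesssim\delta^{-|s|}$ and $\|\partial^s_x\phi_{\delta^\alpha}\|_\infty\lesssim\delta^{-\alpha|s|}$, the boundedness of the $a_{ij}$, the linear bound $|b_i|\lesssim 1+|v|$ (which holds since $\beta<1$ under either \nref{ass:coeff-growth-one}{($\Sigma$3)} or \nref{ass:coeff-growth-two}{($\Sigma$3$'$)}), the bound $|v|\le 2\delta$, $|x|^\gamma\le(2\delta^\alpha)^\gamma$ on $\supp\chi_\delta$, and $|\nabla\Phi(x)|\le N(1+|x|^\gamma)$, every term is bounded by a power of $\delta$ times one of $\|f\|_{H_m}$, $\|v_jf\|_{H_m}$, $\|\partial_{v_j}f\|_{H_m}$. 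Dissipativity of $(L_{n,m},D)$ gives $\|f\|_{H_m}\le\|(I-L_{n,m})f\|_{H_m}$, and Lemma~\ref{lem:technical-estimate} supplies the two remaining estimates with the factor $n^{(1+\beta)/2}\le(2\delta)^{(1+\beta)/2}$. The resulting exponents of $\delta$ are $-1$ for (a), $-2$ for (b), $(\beta-1)/2$ for (c), $(1+\beta)/2-\alpha$ for (d), and $\alpha\gamma-1$ for (e); setting $\rho(\delta)$ equal to their maximum yields the desired bound.

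The main obstacle is verifying that \emph{all} five powers of $\delta$ are negative \emph{simultaneously}. Terms (a), (b), (c) are harmless because $\beta<1$ always. The delicate balance is between (d), which demands $\alpha>(1+\beta)/2$ to compensate for the growth of $v$ produced by $v_i\partial_{x_i}\chi_\delta$, and (e), which demands $\alpha<1/\gamma$ to keep the growth of $\nabla\Phi$ below the decay $\delta^{-1}$ coming from $\partial_{v_i}\chi_\delta$. The scaling assumption $\gamma<\tfrac{2}{1+\beta}$ built into Theorem~\ref{thm:ess-m-diss} is precisely what makes the interval $\bigl(\tfrac{1+\beta}{2},\tfrac{1}{\gamma}\bigr)$ nonempty, so any fixed $\alpha$ in this interval works; the constant $D_2$ collects the geometric constants $C_\phi,C_\psi,C_\Sigma^{-1},M_\Sigma,M,N,d$ and the universal constant $D_1$ from Lemma~\ref{lem:technical-estimate}, none of which depends on $\delta$, $n$, or $m$.
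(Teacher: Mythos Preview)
Your argument is essentially the same as the paper's: expand via the product rule, observe that on $\supp\chi_\delta$ the operators $L$ and $L_{n,m}$ and the norms $\|\cdot\|_H$, $\|\cdot\|_{H_m}$ agree, then bound each commutator term using the cutoff estimates together with Lemma~\ref{lem:technical-estimate}, and finally verify that every resulting power of $\delta$ is negative thanks to the choice $\alpha\in\bigl(\tfrac{1+\beta}{2},\tfrac{1}{\gamma}\bigr)$.

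One small slip: the exponent you list for term (a) is incorrect. Since $a_{ij}$ is bounded and $\|\partial_{v_j}\chi_\delta\|_\infty\lesssim\delta^{-1}$, the bound on (a) is $\delta^{-1}\|\partial_{v_i}f\|_{H_m}$, and after invoking Lemma~\ref{lem:technical-estimate} with $n\le 2\delta$ this becomes $\delta^{-1}\cdot\delta^{(1+\beta)/2}=\delta^{(\beta-1)/2}$, not $\delta^{-1}$. The paper records exactly this exponent for the corresponding term. Of course $(\beta-1)/2<0$ since $\beta<1$, so the conclusion is unaffected. Apart from this bookkeeping error the proof is correct and matches the paper's approach.
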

\begin{proof}
By the product rule,
\[
\begin{aligned}
	\|(I-L)f_\delta-g\|_H &\leq \|\chi_\delta((I-L)f -g) \|_H + \sum_{i,j=1}^d \| a_{ij}\phi_{\delta^\alpha}(x)\partial_j\partial_i\psi_\delta(v) f \|_H\\
		&+2\sum_{i,j=1}^d \|a_{ij}\phi_{\delta^\alpha}(x)\partial_i\psi_\delta(v)\partial_{v_j}f\|_H
		 + \sum_{i,j=1}^d \|\partial_j a_{ij}\phi_{\delta^\alpha}(x)\partial_i\psi_\delta(v)f\|_H \\
		&+\sum_{i,j=1}^d \| a_{ij}v_j\phi_{\delta^\alpha}(x)\partial_i\psi_\delta(v)f\|_H
		 + \sum_{i=1}^d \| v_i\partial_i\phi_{\delta^\alpha}(x)\psi_\delta(v)f\|_H \\
		&+\sum_{i=1}^d \| \partial_i\Phi \phi_{\delta^\alpha}(x)\partial_i\psi_\delta(v)f\|_H.
\end{aligned}
\]
Due to the choice of $n$ and $m$, every $\|\cdot \|_H$ on the right hand side can be replaced with $\|\cdot \|_{H_m}$,
$a_{ij}$ by $a_{ij,n}$, and $\Phi$ by $\Phi_m$, hence $L$ by $L_{n,m}$.

We now give estimates for each summand of the right hand side, in their order of appearance:
\begin{enumerate}[(1)]
	\item $\|\chi_\delta((I-L)f -g) \|_H \leq \|(I-L_{n,m})f -g\|_{H_m}$,
	\item $\|a_{ij}\phi_{\delta^\alpha}(x)\partial_j\partial_i\psi_\delta(v) f \|_H \leq M_\Sigma C_\psi \delta^{-2}\|f\|_{H_m}$,
	\item $\|a_{ij}\phi_{\delta^\alpha}(x)\partial_i\psi_\delta(v)\partial_{v_j}f\|_H \leq  M_\Sigma C_\psi \delta^{-1}\|\partial_{v_j}f\|_{H_m}$,
	\item $\|\partial_j a_{ij}\phi_{\delta^\alpha}(x)\partial_i\psi_\delta(v)f\|_H \leq \max\{B_\Sigma,M\cdot (2\delta)^{\beta\vee 0}\} C_\psi \delta^{-1}\|f\|_{H_m}$,
	\item $\|a_{ij}v_j\phi_{\delta^\alpha}(x)\partial_i\psi_\delta(v)f\|_H \leq M_\Sigma C_\psi \delta^{-1}\|v_jf\|_{H_m}$,
	\item $\|v_i\partial_i\phi_{\delta^\alpha}(x)\psi_\delta(v)f\|_H \leq C_\phi\delta^{-\alpha}\|v_if\|_{H_m}$,
	\item $\|\partial_i\Phi \phi_{\delta^\alpha}(x)\partial_i\psi_\delta(v)f\|_H \leq N(1+(2\delta^\alpha)^\gamma) C_\psi\delta^{-1} \|f\|_{H_m}$,
\end{enumerate}
where the last inequality is due to $|\partial_i\Phi(x)|\leq N(1+|x|^\gamma)$ for all $x\in\R^d$ and the support of the cutoff as in \eqref{eq:container_balls}.
Application of Lemma~\ref{lem:technical-estimate} shows the existence of $D_2$ independent of $n,m$, such that
\[
	\|(I-L)f_\delta-g\|_H \leq \|(I-L_{n,m})f -g\|_{H_m} + D_2\cdot\rho(\delta) \|(I-L_{n,m})f\|_{H_m}
\]
where
\[
	\rho(\delta)\defeq \delta^{-2}
	+ 2^{\frac{1+\beta}2}\delta^{\frac{1+\beta}2-1}
	+ 2^{\beta\vee 0}\delta^{(\beta\vee 0)-1}
	+ 2^{\frac{1+\beta}2}\delta^{\frac{1+\beta}2-\alpha}
	+ \delta^{-1}
	+ 2^\gamma\delta^{\alpha\gamma-1}.
\]
Clearly $\rho(\delta)\to 0$ as $\delta\to\infty$ due to $\beta<1$ and the definition of $\alpha$.
\end{proof}

Now finally we show that for each $\eps>0$, we can find some $f_\delta\in D$ such that
\[\|(I-L)f_\delta-g\|_H <\eps.\]
Choose $\delta>0$ large enough such that $\rho(\delta) < \frac{\eps}{4D_2\|g\|_H}$ (where $\rho$ ans $D_2$ are provided by Lemma~\ref{lem:even-more-technical}) and that there exist $n,m$ satisfying \eqref{eq:container_balls}.

Then choose $f\in D$ via Theorem~\ref{thm:generator-nice-coeff} such that $\|(I-L_{n,m})f-g\|_{H_m}<\min\{ \frac\eps 2,\|g\|_H \}$
and define $f_\delta$ as before.
Note that due to the choice of the cutoffs, it holds $\|g\|_H=\|g\|_{H_m}$, therefore
\[
	\|(I-L)f_\delta-g\|_H < \frac\eps 2 + \frac\eps{4\|g\|_{H_m}}(\|(I-L_{n,m})f-g\|_{H_m}+\|g\|_{H_m}) < \eps.
\]
As mentioned earlier, this shows essential m-dissipativity of the operator $(L,D)$ on $H$ and therefore concludes the proof of Theorem~\ref{thm:ess-m-diss}.

\section{Applications}

\subsection{The associated Cauchy problem}\label{subsec:cauchy}
We consider the abstract Cauchy problem associated with the operator $L$. Given the initial condition $u_0\in H$,
$u:[0,\infty)\to H$ should satisfy
\begin{equation}\label{eq:cauchy-kol}
	\partial_t u(t) = \left(\tr\left(\Sigma H_v \right)+ b\cdot\nabla_v +v\cdot\nabla_x -\nabla\Phi\cdot\nabla_v\right)u(t) \quad\text{ and }\quad u(0)=u_0.
\end{equation}
If we set $u(t)\defeq T_tu_0$, where $(T_t)_{t\geq 0}$ is the semigroup on $H$ generated by the closure $(L,D(L))$ of $(L,D)$,
then the map $t\mapsto u(t)$ is continuous in $H$. For all $t\geq 0$, it holds that $\int_0^t u(s)\,\mathrm{d}s\in D(L)$ with
$L\int_0^t u(s)\,\mathrm{d}s=T_tu_0-u_0 = u(t)-u_0$, hence $u$ is the unique mild solution to the abstract Cauchy problem.

If $u_0\in D(L)$, then $u(t)\in D(L)$ for all $t\geq 0$, and $\partial_t u(t) = LT_tu_0=Lu(t)$, so $u$ is even a classical solution to the abstract Cauchy problem associated to $L$. In particular, this holds for all $u_0 \in C_c^2(\R^{d\times d})$, since $L$ is dissipative there and it extends $D$, which implies $C_c^2(\R^{d\times d})\subset D(L)$.

In this context, Theorem~\ref{thm:main-result} shows exponential convergence of the unique solution $u(t)$ to a constant as $t\to\infty$. More precisely, for each $\theta_1 > 1$ we can calculate $\theta_2\in(0,\infty)$ depending on the choice of $\Sigma$ and $\Phi$ such that for all $t\geq 0$,
\[
	\left\| u(t)-\int_E u_0\,\mathrm{d}\mu \right\|_H \leq \theta_1\mathrm{e}^{-\theta_2 t} \left\|u_0-\int_E u_0\,\mathrm{d}\mu \right\|_H.
\]

\subsection{Connection to Langevin dynamics with multiplicative noise}\label{subsec:stochastics}

So far, our considerations have been purely analytical, giving results about the core property of $D$ for $L$
and rate of convergence for the generated semigroup $(T_t)_{t\geq 0}$ in $H$. However, this approach is still quite natural in the context of the Langevin SDE \eqref{eq:sde}, as the semigroup has a meaningful stochastic representation. The connection is achieved via the powerful theory of generalized Dirichlet forms as developed by Stannat in \cite{GDF}, which gives the following:

Assume the context of Theorem~\ref{thm:ess-m-diss}. There exists a Hunt process
\[
	\mathbf{M}=\left(\Omega,\mathcal{F},(\mathcal{F}_t)_{t\geq 0},(X_t,V_t),(P_{(x,v)})_{(x,v)\in\R^d\times\R^d}\right)
\]
with state space $E=\R^d\times\R^d$, infinite lifetime and continuous sample paths ($P_{(x,v)}$-a.s. for all $(x,v)\in E$),
which is properly associated in the resolvent sense with $(T_t)_{t\geq 0}$. In particular (see \cite[Lemma 2.2.8]{Conrad2011}), this means that for each bounded measurable $f$ which is also square-integrable with respect to the invariant measure $\mu$ and all $t>0$, $T_tf$ is a $\mu$-version of $p_tf$, where $(p_t)_{t\geq 0}$ is the transition semigroup of $\mathbf{M}$ with
\[
	p_tf: \R^d\times\R^d\to \R,\qquad (x,v)\mapsto \mathbb{E}_{(x,v)}\left[f(X_t,V_t) \right].
\]
This representation can be further extended to all $f\in H$, see for example \cite[Exercise IV.2.9]{MaRockner}.
Moreover, if $\mu$-versions of $\Sigma$ and $\Phi$ are fixed, then $P_{(x,v)}$ solves the martingale problem for $L$ on $C_c^2(E)$ for $L$-quasi all $(x,v)\in E$, i.e. for each $f\in C_c^2(E)$, the stochastic process $(M_t^{[f]})_{t\geq0}$ defined by
\[
	M_t^{[f]}\defeq f(X_t,V_t)-f(X_0,V_0)-\int_0^t Lf(X_s,V_s)\,\mathrm{d}s,
\]
is a martingale with respect to $P_{(x,v)}$. If $h\in L^2(\mu)$ is a probability density with respect to $\mu$, then the law $P_h\defeq \int_E P_{(x,v)}h(x,v)\,\mathrm{d}\mu$ solves the martingale problem for $(L,D(L))$, without the need to fix specific versions of $\Sigma$ and $\Phi$. In particular, this holds for $h=1$. As in \cite[Lemma 2.1.8]{Conrad2011}, for $f\in D(L)$ with $f^2\in D(L)$ and $Lf\in L^4(\mu)$, a martingale is also defined via
\[
	N_t^{[f]}\defeq (M_t^{[f]})^2 - \int_0^t L(f^2)(X_s,V_s)-(2fLf)(X_s,V_s)\,\mathrm{d}s,\qquad t\geq 0,
\]
which may serve as a way to verify that $\mathbf{M}$ is already a weak solution of \eqref{eq:sde},
as it allows a representation of the quadratic variation process. Indeed, if we set $f_n^i(x,v)\defeq \varphi_n(x_i)x_i$
for a suitable sequence $(\varphi_n)_{n\in\N}$ of cutoff functions as in Definition~\ref{def:cutoff}, evaluation of $N_t^{[f_n^i]}$ shows that the quadratic variation $[M^{[f_n^i]}]_t$ of $M_t^{[f_n^i]}$ is constantly zero, which implies the same for $M_t^{[f_n^i]}$. Hence, by introducing appropriate stopping times, it follows that $X_t^i-X_0^i= \int_0^t V_s^i\,\mathrm{d}s$, so the first line of the SDE \eqref{eq:sde} is satisfied.

In an analogous procedure, using $g_n^i(x,v)\defeq \varphi_n(v_i)v_i$, we can see that the quadratic covariation $[V^i,V^j]_t$ is given by $2\int_0^t a_{ij}(V_s)\,\mathrm{d}s$. Since $\Sigma$ is strictly elliptic, the diffusion matrix $\sigma$ is invertible and by Lévy's characterization, the process $B_t\defeq \int_0^t \frac1{\sqrt{2}}\sigma^{-1}(V_s)\,\mathrm{d}M_s$ is a standard $d$-dimensional Brownian motion, where $M_t\defeq (M_t^{[v_1]},\dots,(M_t^{[v_d]})$, which is a local martingale. Moreover, it holds that
\[
	\mathrm{d}V_t= \mathrm{d}M_t+  b(V_t)-\nabla\Phi(X_t)\,\mathrm{d}t
		= \sqrt{2}\sigma(V_t)\mathrm{d}B_t + b(V_t)-\nabla\Phi(X_t)\,\mathrm{d}t,
\]
so $(X_t,V_t)$ is a weak solution to the SDE \eqref{eq:sde} with initial distribution $h\mu$ under $P_h$.

Finally, in this context, the statement on hypocoercivity (Theorem~\ref{thm:main-result}) shows that for every $\theta_1>1$, there is an explicitly computable $\theta_2\in (0,\infty)$ depending on the choice of $\Sigma$ and $\Phi$, such that the transition semigroup $(p_t)_{t\geq 0}$ satisfies
\begin{equation}\label{eq:prob-conv-speed}
	\|p_tg-\int_E g\,\mathrm{d}\mu\|_{L^2(\mu)} \leq \theta_1\mathrm{e}^{-\theta_2 t}\|g-\int_E g\,\mathrm{d}\mu  \|_{L^2(\mu)}
\end{equation}
for all $g\in L^2(\mu)$ and $t\geq 0$. In particular, this implies that the probability law $P_\mu$ on the space of continuous paths on $E$ with initial distribution (and invariant measure)  $\mu$
has the strong mixing property, i.e. for any Borel sets $A_1, A_2$ on the path space,
it holds that
\[
	P_\mu(\varphi_tA_1\cap A_2) \to P_\mu(A_1) P_\mu(A_2)\quad\text{ as }t\to\infty,
\]
where $\varphi_tA_1\defeq \{(Z_s)_{s\geq 0}\in C([0,\infty),E)\mid (Z_{s+t})_{s\geq 0}\in A_1  \}$.
This follows from \eqref{eq:prob-conv-speed} and associatedness of the semigroups to the probability law $P_\mu$, see for example \cite[Remark 2.1.13]{Conrad2011}.

\subsection{Corresponding Fokker-Planck equation}\label{subsec:fokker-planck}
In this part we give a reformulation of the convergence rate result detailed in Section~\ref{subsec:cauchy}
for readers which are more familiar with the classical Fokker-Planck formulation for probability densities.
In the current literature, Fokker-Planck equations are more often expressed as equations on measures, rather than functions. For example, in the non-degenerate case, exponential convergence in total variation to a stationary solution is studied in \cite{BRS}, which includes further references to related works. Our goal here however is simply to make the convergence result immediately applicable to less specialized readers in the form of the estimate \eqref{eq:fokker-planck-conv} for solutions to the Cauchy problem associated with the operator defined in \eqref{eq:fokker-planck-op}, hence we stick to the expression via probability densities.

Given a Kolmogorov backwards equation of the form $-\partial_t u(x,t)= L^\mathrm{K}u(x,t)$,
the corresponding Fokker-Planck equation is given by $\partial_t f(x,t)=L^\mathrm{FP}f(x,t)$, where $L^\mathrm{FP}=(L^\mathrm{K})'$
is the adjoint operator of $L^\mathrm{K}$ in $L^2(\R^d,\mathrm{d}x)$, restricted to smooth functions. In our setting, $L^\mathrm{K}=L$ produces via integration by parts for $f\in D$:
\begin{equation}\label{eq:fokker-planck-op}
	L^\mathrm{FP}f = \sum_{i,j=1}^d \partial_{v_i}(a_{ij} \partial_{v_j}f + v_j a_{ij} f) -v\cdot\nabla_xf+\nabla\Phi\nabla_vf.
\end{equation}
Consider the Fokker-Planck Hilbert space $\widetilde{H}\defeq L^2(E,\widetilde{\mu})$, where
\[
	\widetilde{\mu}\defeq  (2\pi)^{-\frac{d}2} \mathrm{e}^{\Phi(x)+\frac{v^2}2}\,\mathrm{d}x\otimes\mathrm{d}v.
\]
Then a unitary Hilbert space transformation between $H$ and $\widetilde{H}$ is given by
\[
	T:H\to \widetilde{H},\quad  Tg=\rho g \quad\text{ with }\quad \rho(x,v)\defeq \mathrm{e}^{-\Phi(x)-\frac{v^2}2}.
\]
Let $(T_t)_{t \geq 0}$ be the semigroup on $H$ generated by $(L,D(L))$ and denote by $(T_t^*)_{t\geq 0}$ and $L^*$ the adjoint semigroup on $H$ and its generator, respectively. It is evident that for $f\in D$, $L^*$ is given as $L^*f = (S+A)f$, where $S$ and $A$ refer to the symmetric and antisymmetric components of $L$ respectively, as defined in Definition~\ref{def:operators}. As mentioned in \ref{rem:magic}, we achieve the exact same results for the equation corresponding to $L^*$ as for the one corresponding to $L$, which we considered in Section 3. In particular, $(L^*,D)$ is essentially m-dissipative and its closure $(L^*,D(L^*))$ generates $(T_t^*)_{t\geq 0}$, which converges exponentially to equilibrium with the same rate as $(T_t)_{t\geq 0}$.

Let $\widetilde{T}_t g \defeq T(T_t^*)T^{-1}g$ for $t\geq 0$, $g\in\widetilde{H}$. Then
$(\widetilde{T}_t)_{t\geq 0}$ is a strongly continuous contraction semigroup on $\widetilde{H}$ with the generator $(TL^*T^{-1},T(D(L^*)))$. It is easy to see that $L^\mathrm{FP}=TL^*T^{-1}$, so for each initial condition $u_0\in\widetilde{H}$, $u(t)\defeq\widetilde{T}_t u_0$ is a mild solution to the Fokker-Planck Cauchy problem. Note that for $\Phi\in C^\infty(\R^d)$, the transformation $T$ leaves $D$ invariant, which implies $D\subset T(D(L^*))$ and essential m-dissipativity of $(L^{\mathrm{FP}},D)$ on $\widetilde{H}$.

If $u_0\in T(D(L^*))$, then
\[
	\partial_t \widetilde{T}_t u_0 = T(L^*T_t^*)T^{-1}u_0,
\]
and therefore
\[
\begin{aligned}
	\int_E \partial_t u(t) f\,\mathrm{d}(x,v)
		&= \int_E L^*T_t^*T^{-1}u_0 f \,\mathrm{d}\mu
		= \int_E T_t^*T^{-1}u_0 Lf\,\mathrm{d}\mu \\
		&= \int_E TT_t^*T^{-1}u_0 Lf\,\mathrm{d}(x,v)
		= \int_E L^\mathrm{FP}u(t) f\,\mathrm{d}(x,v),
\end{aligned}
\] 
so $u(t)$ is also a classical solution.
Due to the invariance of $\mu$ for $L$, a stationary solution is given by $\rho$ and by Theorem~\ref{thm:main-result}, for every $\theta_1> 1$ and the appropriate $\theta_2$ it holds that
\begin{equation}\label{eq:fokker-planck-conv}
\begin{aligned}
	\left\| u(t)-\rho(u_0,\rho)_{\widetilde{H}} \right\|_{\widetilde{H}}
	&= \left\| T_t^*T^{-1}u_0 - (T^{-1}u_0,1)_H  \right\|_H \\
	&\leq \theta_1\mathrm{e}^{-\theta_2 t} \left\| T^{-1}u_0 - (T^{-1}u_0,1)_H  \right\|_H \\
	&= \theta_1\mathrm{e}^{-\theta_2 t} \left\| u_0-\rho(u_0,\rho)_{\widetilde{H}} \right\|_{\widetilde{H}}.
\end{aligned}
\end{equation}

This shows exponential convergence to a stationary state for solutions to the Fokker-Planck equation.

\bibliographystyle{spphys}
\bibliography{sources}

\end{document}